\documentclass[11pt]{article}

\usepackage[margin=1.25in]{geometry}

\usepackage{amssymb,amsmath}
\usepackage{amsthm}
\usepackage{hyperref}
\usepackage{mathrsfs}
\usepackage{textcomp}
\usepackage{enumerate}
\usepackage{framed}
\usepackage{ulem}
\usepackage{color,xcolor}

\usepackage{soul}
\soulregister\cite7 
\soulregister\citep7 
\soulregister\citet7 
\soulregister\ref7 
\soulregister\pageref7 
\soulregister\eqref7

\hypersetup{
    colorlinks,%
    citecolor=blue,%
    filecolor=blue,%
    linkcolor=blue,%
    urlcolor=black
}

\newtheorem{theorem}{Theorem}[section]
\newtheorem{definition}{Definition}[section]
\newtheorem{lemma}{Lemma}[section]
\newtheorem{remark}{Remark}[section]
\newtheorem{proposition}{Proposition}[section]
\newtheorem{corollary}{Corollary}[section]

\numberwithin{equation}{section}

\makeatletter

\newdimen\bibspace
\setlength\bibspace{0pt}   
\renewenvironment{thebibliography}[1]{%
 \section*{\refname 
       \@mkboth{\MakeUppercase\refname}{\MakeUppercase\refname}}%
     \list{\@biblabel{\@arabic\c@enumiv}}%
          {\settowidth\labelwidth{\@biblabel{#1}}%
           \leftmargin\labelwidth
           \advance\leftmargin\labelsep
           \itemsep\bibspace
           \parsep\z@skip     %
           \@openbib@code
           \usecounter{enumiv}%
           \let\p@enumiv\@empty
           \renewcommand\theenumiv{\@arabic\c@enumiv}}%
     \sloppy\clubpenalty4000\widowpenalty4000%
     \sfcode`\.\@m}
    {\def\@noitemerr
      {\@latex@warning{Empty `thebibliography' environment}}%
     \endlist}

\makeatother

           \newcommand{\ud}{\mathrm{d}}
\newcommand{\be}{\begin{equation}}      \newcommand{\ee}{\end{equation}}


\begin{document}

\title{Uniqueness of positive solutions to the higher order Brezis-Nirenberg problem}

\author{Zhongwei Tang\thanks{Z. Tang is supported by National Natural Science Foundation of China (12071036).},\, Ning Zhou}

\date{}

\maketitle

\begin{abstract}
In this paper, we study the higher order Brezis-Nirenberg problem under the Navier boundary condition
\be\label{eq}
\begin{cases}
(-\Delta)^m u=\varepsilon u+u^{p} & \text { in }\, \Omega, \\
u>0 & \text { in }\, \Omega, \\
u=-\Delta u=\cdots=(-\Delta)^{m-1} u=0 & \text { on }\, \partial \Omega,
\end{cases}
\ee
where $\Omega$ is a strictly convex smooth bounded domain in $\mathbb{R}^n$ with $n \geq 4m$, $m \in \mathbb{N}_{+}$, $\varepsilon\in (0,\lambda_{1})$, $\lambda_{1}$ is the first Navier eigenvalue for $(-\Delta)^{m}$ in $\Omega$, and $p=\frac{n+2m}{n-2m}$. We prove that the solutions of \eqref{eq} are unique if either $\varepsilon$ close to $\lambda_1$ or $\varepsilon$ close to 0 and $\Omega$ satisfies some symmetry assumptions. The proof is mainly based on our previous works about the blow up analysis and compactness result for solutions to higher order critical elliptic equations and the asymptotic behavior of solutions to \eqref{eq}.
\end{abstract}
{\bf Key words:} Higher order Brezis-Nirenberg problem, Uniqueness, Blow up analysis.

{\noindent\bf Mathematics Subject Classification (2020)}\quad 35A02 · 35J08 · 35J30 · 35J91

\section{Introduction}
In a celebrated paper, Brezis-Nirenberg \cite{BNPositive1983} studied the following nonlinear critical elliptic partial differential equation
\be\label{eq:BN}
\begin{cases}
-\Delta u-\lambda u=|u|^{2^*-2} u & \text { in }\, \Omega, \\
u=0 & \text { on }\, \partial \Omega,
\end{cases}
\ee
where $\Omega$ is a smooth bounded domain in $\mathbb{R}^n$ for $n \geq 3$, $2^*=\frac{2n}{n-2}$ is the critical Sobolev exponent associated to critical embedding of the space $H_0^1(\Omega)$.

Denoting by $\lambda_1(\Omega)$ the first eigenvalue of $-\Delta$ on $\Omega$ under Dirichlet boundary condition, they proved that problem \eqref{eq:BN} admits at least one positive solution for any $\lambda \in(0, \lambda_1(\Omega))$ provided that $n\geq 4$. They also proved that a solution gap phenomenon appears in the case $n=3$, to be precise, there exists a constant $\lambda_* \in(0, \lambda_1(\Omega))$ such that, for any $\lambda \in(\lambda_*, \lambda_1(\Omega))$, problem \eqref{eq:BN} admits at least one positive solution. On the other hand, problem \eqref{eq:BN} admits no positive solution for $n \geq 3$ if either $\lambda \geq \lambda_1(\Omega)$ or $\lambda \leq 0$ and $\Omega$ is a star-shaped domain.

There have been tremendous amount of works in related to the existence, nonexistence, and multiplicity of solutions to \eqref{eq:BN} over the past decades. Concerning the uniqueness problem of \eqref{eq:BN}, the shape of the domain $\Omega$ is important. For example, if $\Omega$ is a ball in $\mathbb{R}^n$, it was proved in \cite{ZUniqueness1992,SUniqueness1993,AYAn1994} that the solutions of \eqref{eq:BN} is unique if $\varepsilon\in(0,\lambda_1(\Omega))$, $n\geq 4$ and $\varepsilon\in(\lambda_1(\Omega)/4,\lambda_1(\Omega))$, $n=3$. On the other hand, it was proved in \cite{LSolutions1992} for $n=4$ and in \cite{RA1989} for $n \geq 5$ that \eqref{eq:BN} has at least $\operatorname{cat}_{\Omega}(\Omega)$ solutions for $\varepsilon$ small, where $\operatorname{cat}_X(Y)$ denotes the Lusternik-Schnirelman category of $Y$ in $X$. Zhu \cite{ZUniqueness2000} obtained the uniqueness results about \eqref{eq:BN} when $\varepsilon$ close to $\lambda_1(\Omega)$. If the region $\Omega$ satisfies some assumptions, Cerqueti and Grossi (see \cite{CA1999,CGLocal2001}) proved the uniqueness of solutions to \eqref{eq:BN} when $\varepsilon>0$ small. Recently, Cao-Luo-Peng \cite{CLPThe2021} studied the local uniqueness of multi-peak solutions and the exact number of positive solutions to \eqref{eq:BN} when $\varepsilon>0$ small.

Some of the aforementioned results were extend to the biharmonic version of the Brezis-Nirenberg problem under the Navier boundary condition
\be\label{eq:Navier}
\begin{cases}
(-\Delta)^2 u=\varepsilon u+u^{p} & \text { in }\, \Omega, \\
u>0 & \text { in }\, \Omega, \\
u=-\Delta u=0 & \text { on }\, \partial \Omega,
\end{cases}
\ee
where $p=\frac{n+4}{n-4}$. Let $\lambda_{1,2}$ be the first eigenvalue of $(-\Delta)^2$ on $\Omega$ under Navier boundary condition, it was proved by Van der Vorst in \cite{VBest1995} that if $n \geq8$, \eqref{eq:Navier} admits at least one solution for any $\varepsilon\in(0,\lambda_{1,2})$; if $n \in\{5,6,7\}$, \eqref{eq:Navier} admits at least one solution for $\varepsilon<\lambda_{1,2}$ close to $\lambda_{1,2}$; \eqref{eq:Navier} admits no solution for $\varepsilon \geq \lambda_{1,2}$ and, if $\Omega$ is star-shaped, for $\varepsilon \leq 0$. Similar conclusions hold also for the higher order versions of the Brezis-Nirenberg problem under the Navier boundary condition
\be\label{eq:higherorder}
\begin{cases}
(-\Delta)^{m} u=\varepsilon u+u^{p} & \text { in }\, \Omega, \\
u>0 & \text { in }\, \Omega,\\
u=-\Delta u=\cdots=(-\Delta)^{m-1} u=0 & \text { on }\, \partial \Omega,
\end{cases}
\ee
where $p=\frac{n+2m}{n-2m}$, $n> 2m$, $m\in \mathbb{N}_+$, the readers can refer to \cite{GGSPolyharmonic2010}. Let $\lambda_{1,m}$ be the first eigenvalue of $(-\Delta)^m$ on $\Omega$ under Navier boundary condition, when there is no confusion, we just denote it by $\lambda_1$.

In this paper we shall address the uniqueness of solutions to \eqref{eq:higherorder} as $\varepsilon$ close to $\lambda_1$ or 0. Our first result deals with the case $\varepsilon$ close to $\lambda_1$.

\begin{theorem}\label{thm:1}
Let $\Omega$ be a strictly convex smooth bounded domain in $\mathbb{R}^{n}$ with $n \geq 4m$, $m\in \mathbb{N}_+$. Then there exists a constant $\varepsilon_{1}>0$ such that, for any $\varepsilon \in(\lambda_{1}-\varepsilon_{1}, \lambda_{1})$, \eqref{eq:higherorder} has a unique solution.
\end{theorem}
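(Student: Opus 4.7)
The plan is to argue by contradiction. Suppose there exist a sequence $\varepsilon_k\uparrow\lambda_{1}$ and, for each $k$, two distinct positive solutions $u_{1,k}\neq u_{2,k}$ of \eqref{eq:higherorder} with $\varepsilon=\varepsilon_k$.

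The first step is to show $\|u_{j,k}\|_{L^\infty(\Omega)}\to 0$ for $j=1,2$. If a subsequence of $\{u_{j,k}\}_k$ were bounded in $L^\infty(\Omega)$, elliptic regularity would give $C^{2m}(\overline{\Omega})$-convergence to a non-negative solution $u_\infty$ of \eqref{eq:higherorder} at $\varepsilon=\lambda_1$; testing this limit against the (simple, positive) first Navier eigenfunction $\varphi_1$ would give $\int_\Omega u_\infty^p \varphi_1 = 0$, hence $u_\infty\equiv 0$. Otherwise $\|u_{j,k}\|_\infty\to\infty$, and the compactness / blow-up analysis from our previous work, combined with the strict convexity of $\Omega$ to rule out boundary concentration, forces $u_{j,k}$ to concentrate at finitely many interior points with standard Talenti-bubble profiles of scale $\mu_{j,i,k}\to 0$. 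Testing \eqref{eq:higherorder} against $\varphi_1$ then yields
\[
(\lambda_1-\varepsilon_k)\int_\Omega u_{j,k}\varphi_1 = \int_\Omega u_{j,k}^p\varphi_1,
\]
and the bubble-scale estimates $\int_\Omega u_{j,k}\varphi_1\sim\sum_i c_1\varphi_1(x_{j,i})\mu_{j,i,k}^{(n+2m)/2}$ and $\int_\Omega u_{j,k}^p\varphi_1\sim\sum_i c_2\varphi_1(x_{j,i})\mu_{j,i,k}^{(n-2m)/2}$ force $\lambda_1-\varepsilon_k\gtrsim\min_i\mu_{j,i,k}^{-2m}\to\infty$, contradicting $\varepsilon_k\to\lambda_1^-$. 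Hence $\|u_{j,k}\|_\infty\to 0$.

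The second step is a Lyapunov--Schmidt reduction around the trivial solution. Normalize $\varphi_1$ so that $\int_\Omega\varphi_1^2=1$ and decompose $u_{j,k}=t_{j,k}\varphi_1+w_{j,k}$ with $t_{j,k}:=\int_\Omega u_{j,k}\varphi_1>0$ and $w_{j,k}\perp\varphi_1$ in $L^2(\Omega)$. Projecting \eqref{eq:higherorder} onto $\varphi_1$ and onto its orthogonal complement gives
\[
(\lambda_1-\varepsilon_k)\,t_{j,k}=\int_\Omega u_{j,k}^p\,\varphi_1, \qquad \bigl((-\Delta)^m-\varepsilon_k\bigr)w_{j,k}=P^{\perp}\bigl(u_{j,k}^p\bigr),
\]
where $P^{\perp}$ is the $L^2$-projection onto $\{\varphi_1\}^\perp$. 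Since the second Navier eigenvalue is strictly larger than $\lambda_1$, the operator $((-\Delta)^m-\varepsilon_k)$ is uniformly invertible on $\{\varphi_1\}^\perp$ for $\varepsilon_k$ close to $\lambda_1$, so the implicit function theorem yields $w_{j,k}=W(t_{j,k},\varepsilon_k)$ with $\|W(t,\varepsilon)\|=O(t^p)$ as $t\to 0$. Substituting into the first equation gives the reduced bifurcation equation
\[
\lambda_1-\varepsilon_k = t_{j,k}^{p-1}\int_\Omega\varphi_1^{p+1}+O\bigl(t_{j,k}^{2(p-1)}\bigr),
\]
whose right-hand side is strictly increasing in $t_{j,k}>0$ for $t_{j,k}$ small (since $p-1=\tfrac{4m}{n-2m}>0$) and hence has a unique positive root. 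This forces $t_{1,k}=t_{2,k}$, and then $w_{1,k}=w_{2,k}$ by the reduction, contradicting $u_{1,k}\neq u_{2,k}$.

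The main obstacle is the exclusion of blow-up in Step 1. For $m\geq 2$ the second-order maximum principle is not directly available, so classifying blow-up for the polyharmonic Navier problem (ruling out boundary blow-up, establishing isolated-simple blow-up, and pinning down the single-bubble profile) relies on the full polyharmonic blow-up and compactness machinery from our previous work together with the strict convexity of $\Omega$. Making the $\varphi_1$-test identity quantitative to the point of forcing $\lambda_1-\varepsilon_k\to\infty$ requires the sharp bubble asymptotics developed there as well; once Step 1 is in place, the Lyapunov--Schmidt step is essentially standard.
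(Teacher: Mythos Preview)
Your Step~1 contains a scaling error. The claimed asymptotic $\int_\Omega u_{j,k}\varphi_1\sim c_1\varphi_1(x_{j,i})\mu_{j,i,k}^{(n+2m)/2}$ would require the standard bubble $U(y)=(1+c|y|^2)^{-(n-2m)/2}$ to lie in $L^1(\mathbb{R}^n)$, which it does not: $U(y)\sim|y|^{-(n-2m)}$ at infinity, so $\int_{|y|\le R}U\sim R^{2m}$. After rescaling, one actually gets $\int_\Omega u_{j,k}\varphi_1=O(\mu_{j,i,k}^{(n-2m)/2})$, the \emph{same} order as $\int_\Omega u_{j,k}^p\varphi_1$. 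The $\varphi_1$-test then yields only $\lambda_1-\varepsilon_k\ge c>0$, not $\lambda_1-\varepsilon_k\to\infty$. This is still a contradiction to $\varepsilon_k\to\lambda_1$, so your strategy is salvageable, but the argument as written is incorrect. Note also that to justify the upper bound on $\int_\Omega u_{j,k}\varphi_1$ you need the sharp pointwise estimate $u_{j,k}(x)\le C\mu^{(n-2m)/2}|x-x_0|^{2m-n}$ from the isolated simple blow-up analysis.

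The paper's route for Step~1 is shorter: the compactness theorem from the previous work (the paper's Proposition~\ref{pro:IMRN}) already asserts uniform $C^{2m}$ bounds whenever the linear coefficient $a(x)=\varepsilon$ is bounded below by a positive constant; combined with a moving-planes boundary estimate this gives $\|u_\varepsilon\|_{L^\infty}\le C$ for $\varepsilon\in(\lambda_1-\delta,\lambda_1)$ directly, with no $\varphi_1$-testing needed. Convergence to zero then follows from nonexistence at $\varepsilon=\lambda_1$.

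Your Step~2 is a genuine alternative to the paper's argument. The paper does not use Lyapunov--Schmidt; instead it sets $w_{\varepsilon}=(u_\varepsilon-v_\varepsilon)/\|u_\varepsilon-v_\varepsilon\|_{L^\infty}$, shows $w_\varepsilon\to w_0$ in $C^{2m}(\overline\Omega)$ with $w_0$ a first eigenfunction (hence $w_0>0$ in $\Omega$, $\partial_\nu w_0<0$ on $\partial\Omega$), and then observes from $\int_\Omega u_\varepsilon v_\varepsilon(u_\varepsilon^{p-1}-v_\varepsilon^{p-1})=0$ that each $w_\varepsilon$ has an interior zero $x_\varepsilon$; these must drift to $\partial\Omega$, and the mean value theorem forces $\partial_\nu w_0=0$ at the limit point, contradicting Hopf. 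Your bifurcation argument is equally valid once smallness is established, and is arguably more systematic, but the paper's sign-change/Hopf argument avoids invoking the implicit function theorem and the spectral gap explicitly.
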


Unlike the case when $\varepsilon$ close to $\lambda_1$, the uniqueness of the solutions to \eqref{eq:higherorder} is closely related to the shape of the domain when $\varepsilon>0$ small. For example, it was shown in \cite{ESConcentration2006} that if $m=2$ and $\Omega$ has a rich topology, described by its Lusternik-Schnirelmann category, then \eqref{eq:Navier} has multiple solutions, at least as many as $\operatorname{cat}_{\Omega}(\Omega)$, in case $\varepsilon>0$ is sufficiently small.

Before stating our second main result, a symmetry assumption about the domain $\Omega$ needs to be introduced:

$(A)$ $\Omega$ is symmetric with respect to the hyperplanes $\{x_{i}=0\}$, $i=1, \cdots, n$.

Our second result of this paper reads as follows:

\begin{theorem}\label{thm:2'}
Let $\Omega$ be a strictly convex smooth bounded domain in $\mathbb{R}^{n}$ with $n \geq 6m$, satisfying $(A)$. Then there exists a constant $\varepsilon_{2}>0$ such that, for any $\varepsilon \in(0, \varepsilon_{2})$, \eqref{eq:higherorder} has a unique solution.
\end{theorem}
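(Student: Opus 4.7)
The plan is to argue by contradiction: assume there is a sequence $\varepsilon_k \to 0^+$ for which \eqref{eq:higherorder} admits two distinct positive solutions $u_{1,k}, u_{2,k}$. Since $\Omega$ is strictly convex, the polyharmonic Pohozaev identity under Navier boundary conditions forbids positive solutions at $\varepsilon=0$, so both families must be $L^\infty$-unbounded. Invoking the blow-up analysis and compactness results from our previous works, each $u_{i,k}$ concentrates as a single isolated simple bubble, $u_{i,k} = \alpha_{i,k}\, P U_{\mu_{i,k}, x_{i,k}} + v_{i,k}$, with $\mu_{i,k} \to 0$, $x_{i,k} \to x_i^{\ast} \in \Omega$, and a small error $v_{i,k}$. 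Testing the equation against $\partial_y P U_{\mu,y}$ and sending $k \to \infty$ shows that $x_i^{\ast}$ is a critical point of a Robin-type function $\tau$ built from the polyharmonic Navier Green's function. A moving-plane argument applied to the Green's function on a strictly convex symmetric domain (via $(A)$) identifies $0$ as the unique critical point of $\tau$, with non-degenerate Hessian, so both solutions concentrate at the origin with the same leading scale.

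Once the concentration is pinned, a local moving-plane argument applied to $u_{i,k}$ itself, exploiting the rotational symmetry of the limiting bubble together with the domain symmetry $(A)$, shows that each $u_{i,k}$ is even in every coordinate for $k$ large. Set
\[
w_k = (u_{1,k} - u_{2,k})/\|u_{1,k} - u_{2,k}\|_{L^\infty},
\]
which satisfies the linearized equation
\[
(-\Delta)^m w_k = \varepsilon_k w_k + p\, \xi_k^{p-1} w_k \quad \text{in } \Omega,
\]
with Navier boundary conditions, where $\xi_k$ lies between $u_{1,k}$ and $u_{2,k}$, and inherits the symmetry $(A)$. Rescaling at the common scale $\mu_k$, the function $\tilde w_k(y) = w_k(\mu_k y)$ converges along a subsequence to a bounded, fully symmetric solution $W$ of $(-\Delta)^m W = p\, U^{p-1} W$ on $\mathbb{R}^n$, where $U$ is a standard Talenti-type bubble.

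By the polyharmonic non-degeneracy theorem, the kernel of this equation is spanned by the $n$ translations $\partial_{x_i} U$ and the dilation $\Lambda U = \tfrac{n-2m}{2} U + x \cdot \nabla U$. Full symmetry of $W$ kills the translation coefficients, so $W = c\, \Lambda U$ for some $c \in \mathbb{R}$. To force $c=0$ I would apply a local Pohozaev-type identity for $(-\Delta)^m$ on a small ball $B_\delta(0)$ to the pair $(u_{1,k}, u_{2,k})$, expand both sides via the refined decomposition $P U_{\mu,0} = U_{\mu,0} - c_\mu H(\cdot,0) + \text{l.o.t.}$ (where $H$ is the regular part of the Navier Green's function), and compare orders. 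The interior term carrying the subcritical perturbation $\varepsilon_k u_{i,k}$ contributes at order $\varepsilon_k \mu_k^{2m}$ and picks out precisely the coefficient $c$, multiplied by a non-zero prefactor depending on $\int U\Lambda U$ and $H(0,0)$; the boundary terms are of strictly lower order under the hypothesis $n \geq 6m$. Hence $c=0$, $W \equiv 0$, which together with $\|w_k\|_{L^\infty} = 1$ and standard $L^\infty$-regularity plus the maximum principle for the polyharmonic Navier problem yields the desired contradiction.

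The main obstacle is this final Pohozaev balance. The polyharmonic local identity produces boundary integrals involving products of $(-\Delta)^j u$ and its derivatives up to total order $2m-1-j$ for $j = 0, \ldots, m-1$, each of which must be estimated against the sharp asymptotics of $P U_{\mu,x}$ and of the remainders $v_{i,k}$. Obtaining the clean dominance $\varepsilon_k \mu_k^{2m} \gg \mu_k^{n-2m}$ and absorbing all lower-order pieces is precisely what forces the dimensional threshold $n \geq 6m$; carrying this through rigorously in the polyharmonic Navier setting — where no comparison or maximum principle is available in general and one must work with the explicit Green's representation — is the technical heart of the proof, and parallels the role played by the second-order Brezis-Nirenberg uniqueness theorems of Grossi and Cerqueti-Grossi.
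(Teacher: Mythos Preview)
Your overall strategy matches the paper's: argue by contradiction, blow up, pass the normalized difference $w_\varepsilon=(u_{1,\varepsilon}-u_{2,\varepsilon})/\|u_{1,\varepsilon}-u_{2,\varepsilon}\|_{L^\infty}$ to the limit, land in the kernel of the linearized operator around the standard bubble, kill the translation directions by the symmetry $(A)$, and kill the dilation direction via a Pohozaev computation in the spirit of Cerqueti. Two points are worth noting, however.

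First, the paper does not go through the projected bubble decomposition $PU_{\mu,x}$ or the Robin function to locate the concentration point. Since $\Omega$ satisfies $(A)$, Troy's moving-plane result for cooperative elliptic systems applied to the equivalent Navier system gives directly that every solution is even in each coordinate and attains its maximum at $0$; no critical-point analysis of $\tau$ (or non-degeneracy of its Hessian) is needed. The sharp asymptotics the paper uses instead are of Han type, proved via the Kelvin transform of the rescaled solution. It is precisely in the $L^\infty$ bound for this Kelvin transform (controlling $\varepsilon\mu_\varepsilon^{2m}|z|^{-4m}$ near the origin of the dual domain, which costs a factor $\mu_\varepsilon^{-2m}$ and is absorbed by the Pohozaev estimate $\varepsilon\le C\mu_\varepsilon^{n-4m}$) that the threshold $n\ge 6m$ enters, rather than in the final balance you describe.

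Second, your closing sentence hides a genuine step. From $W\equiv 0$ and $\|w_k\|_{L^\infty}=1$ you cannot conclude a contradiction by ``$L^\infty$-regularity plus the maximum principle'' alone, because the convergence $\tilde w_k\to W$ is only in $C^{2m}_{\mathrm{loc}}$ on an expanding domain. The paper closes the argument by proving a global decay estimate $|w_\varepsilon(y)|\le C|y|^{2m-n}$ on $\Omega_\varepsilon\setminus B_\delta$ (again via the Kelvin transform of $w_\varepsilon$ and a local boundedness lemma), which forces the maximum point of $w_\varepsilon$ to stay in a fixed compact set and hence contradicts $W=0$. You should make this decay step explicit.
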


For the convenience of calculation, we only prove the case of $m=2$, i.e., we only prove the following theorem, the proof for any $m\in \mathbb{N}_+$ just need to make some minor modifications.

\begin{theorem}\label{thm:2}
Let $\Omega$ be a strictly convex smooth bounded domain in $\mathbb{R}^{n}$ with $n \geq 12$, satisfying $(A)$. Then there exists a constant $\varepsilon_{3}>0$ such that, for any $\varepsilon \in(0, \varepsilon_{3})$, \eqref{eq:Navier} has a unique solution.
\end{theorem}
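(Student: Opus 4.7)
The plan is to argue by contradiction: suppose there exist sequences $\varepsilon_k \to 0^+$ and two distinct positive solutions $u_k^{(1)}, u_k^{(2)}$ of \eqref{eq:Navier} with $\varepsilon = \varepsilon_k$. Since the Pohozaev identity rules out nontrivial solutions of $(-\Delta)^2 u = u^p$ on the star-shaped $\Omega$, both sequences must blow up as $\varepsilon_k \to 0$. Invoking the authors' blow-up analysis and compactness results from their previous work, each $u_k^{(i)}$ develops a single concentration point $x_k^{(i)} \to x_*^{(i)} \in \overline{\Omega}$, and the rescaled functions $\mu_k^{(i),(n-4)/2}\, u_k^{(i)}(\mu_k^{(i)} y + x_k^{(i)})$ converge in $C^4_{\mathrm{loc}}(\mathbb{R}^n)$ to the standard bubble $U_{0,1}(y) = c_n(1+|y|^2)^{-(n-4)/2}$, where $\mu_k^{(i)} \to 0$ is the concentration scale.

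Next, I would use the symmetry assumption $(A)$ together with strict convexity to pin the concentration points to the origin. By symmetrization (or by noting that both $u_k^{(i)}(x)$ and $u_k^{(i)}(-x_1,x_2,\dots)$ solve the same equation and must coincide by a uniqueness-of-single-peak argument applied to the symmetric counterpart), the single peak must lie on every symmetry hyperplane $\{x_i=0\}$; hence $x_k^{(i)} \to 0$. A finer asymptotic expansion using the Navier Green's function $G$ and the associated Robin function $H$ then gives matched expansions $u_k^{(i)} \sim \mu_k^{(i),(n-4)/2} U_{0,1}(\,\cdot\,/\mu_k^{(i)}) - \mu_k^{(i),(n-4)/2}\,H(0,\cdot)\cdot(\text{const})+o(\cdots)$, and determines $\mu_k^{(i)}$ as the root of a balance equation of the form
\begin{equation*}
A_n\,\varepsilon_k\,\mu_k^{(i),4} - B_n\, H(0,0)\, \mu_k^{(i),n-4} + o(\cdots) = 0,
\end{equation*}
valid precisely when $n \geq 12$ so that the bubble has enough integrability for the $\varepsilon$-term to dominate the error. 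This fixes $\mu_k^{(1)} = \mu_k^{(2)}(1+o(1))$.

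Then I would pass to the linearized problem. Set $\xi_k = u_k^{(1)} - u_k^{(2)}$ and $w_k = \xi_k/\|\xi_k\|_{L^\infty(\Omega)}$; $w_k$ solves
\begin{equation*}
(-\Delta)^2 w_k = \varepsilon_k w_k + p\, V_k^{p-1}\, w_k \quad \text{in } \Omega,
\end{equation*}
with Navier boundary data, where $V_k = \int_0^1 (t u_k^{(1)} + (1-t)u_k^{(2)})\,\mathrm{d} t$. Rescaling $\tilde w_k(y) = w_k(\mu_k y)$ and extracting a subsequence, I would obtain a bounded solution $w_\infty$ of the linearized bubble equation $(-\Delta)^2 w = p U_{0,1}^{p-1} w$ on $\mathbb{R}^n$. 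By the known nondegeneracy of $U_{0,1}$ for the biharmonic critical equation, $w_\infty$ lies in the $(n+1)$-dimensional kernel spanned by $\partial_{y_i} U_{0,1}$ ($i=1,\dots,n$) and the dilation direction $Z_0 = \tfrac{n-4}{2}U_{0,1} + y\cdot \nabla U_{0,1}$. The $n$ translation directions are ruled out immediately by the symmetry $(A)$ inherited by $w_k$, so $w_\infty = c\, Z_0$ for some $c \in \mathbb{R}$.

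Finally, I would eliminate this last coefficient via a local Pohozaev identity applied to $w_k$ on a ball $B_\delta(0)$: integrating $(-\Delta)^2 w_k$ against $y\cdot \nabla V_k + \tfrac{n-4}{2}V_k$ (or $V_k$ itself) and expanding with the sharp asymptotics above, the interior term produces, modulo $o(\varepsilon_k \mu_k^{(n-4)/2})$, a nonzero multiple of $c$ times $\varepsilon_k \int_{\mathbb{R}^n} U_{0,1} Z_0$, while the boundary terms give contributions controlled by $\mu_k^{n-4}H(0,0)$; the balance equation for $\mu_k$ forces $c = 0$. Thus $w_\infty \equiv 0$, contradicting $\|w_k\|_\infty = 1$ once global-to-local sup norm estimates (again a consequence of the blow-up analysis, valid for $n \geq 12$) are invoked to transfer the local vanishing of $w_k$ to the boundary layer. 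The principal obstacle is this last step: obtaining a Pohozaev identity for the Navier biharmonic problem with error terms sharp enough that the leading $\varepsilon_k$ contribution is not swamped by the boundary corrections, which is exactly where the dimensional threshold $n \geq 12$ becomes essential, and where the computations (involving the Green's function $G$ and its regular part under Navier conditions) are the most delicate part of the argument.
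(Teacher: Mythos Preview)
Your proposal is correct and follows essentially the same route as the paper's proof: contradiction, blow-up to the standard bubble, linearization, nondegeneracy of the bubble, symmetry to kill the translation modes, a Pohozaev-type identity together with the Green's function asymptotics to kill the dilation mode, and a Kelvin-transform decay estimate on $w_\varepsilon$ to reach the contradiction with $\|w_\varepsilon\|_{L^\infty}=1$. Two minor corrections: the paper pins the concentration at the origin via the moving-plane method for the equivalent second-order system (so each solution is itself symmetric), not via a single-peak uniqueness argument, which would be circular here; and the dimensional threshold $n\ge 12$ enters not in the final Pohozaev step but earlier, in the proof of the uniform pointwise bound \eqref{eq:Han-1} through the estimate $\varepsilon \le C\mu_\varepsilon^{\,n-8}$.
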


\begin{remark}
Let's briefly explain the reason for the dimension limitations in the above theorem. First, in the proof of Theorem \ref{thm:1}, we take advantage of the compactness result of solutions to higher order elliptic equation obtained in \cite{NTZCompactness2022}, where $n\geq 4m$ is required, see Proposition \ref{pro:IMRN}. On the other hand, in the proof of Theorem \ref{thm:2}, we use the Pohozaev identity to get an estimate of $\varepsilon$ (see Lemma \ref{lem:vare-muvare}), and need to use this estimate to get an $L^{\infty}$ estimate in \eqref{eq:dimention}, that's the reason why we limit $n\geq12$.
\end{remark}

Let us briefly outline the proof of our main results.

Let $u_{\varepsilon}$ be a solution of \eqref{eq:higherorder}. A crucial ingredient in the proof of Theorem \ref{thm:1} is to show that $\|u_{\varepsilon}\|_{L^{\infty}(\Omega)}$ are uniformly bounded as $\varepsilon \rightarrow \lambda_1$. First of all, by using the method of moving planes, we can show that there exist some positive constants $\delta_{0}$ and $C=C(\delta_{0})$, such that for all $\varepsilon \in(\lambda_{1} / 2, \lambda_{1})$,
$$
u_{\varepsilon}(x) \leq C, \quad \forall\, x \in\{y \in \Omega \mid \operatorname{dist}(y, \partial \Omega) \leq \delta_{0}\}.
$$
On the other hand, in order to show that $u_{\varepsilon}$ are uniformly bounded in $\{y \in \Omega \mid \operatorname{dist}(y, \partial \Omega) \geq \delta_{0}\}$, we make use of a compactness result for higher order elliptic equations obtained by the authors and Niu, see Proposition \ref{pro:IMRN}.

To prove Theorem \ref{thm:2}, we first use the blow up analysis technique for higher order elliptic equations obtained in \cite{NTZCompactness2022} to prove that if $\Omega$ satisfies assumption $(A)$, then $u_{\varepsilon}$ satisfies
\be\label{eq:leastenergy}
\lim _{\varepsilon \rightarrow 0} \frac{\int_{\Omega}|\Delta u_{\varepsilon}|^2 \,\ud x}{(\int_{\Omega}|u_{\varepsilon}|^{p+1}\,\ud x)^{2 / (p+1)}}=S_n,
\ee
where $S_n$ is the best constant in the Sobolev inequality
\be\label{eq:Sobolevine}
S_n\Big(\int_{\mathbb{R}^n}|u|^{p+1} \,\ud x\Big)^{2 / (p+1)} \leq  \int_{\mathbb{R}^n} |\Delta u|^2 \,\ud x,\quad u\in D^{2, 2}(\mathbb{R}^n),
\ee
and $D^{2, 2}(\mathbb{R}^n)$ is the completion of $C_0^{\infty}(\mathbb{R}^n)$ under the norm
$$
\|u\|_{D^{2, 2}(\mathbb{R}^n)}=\Big(\int_{\mathbb{R}^n}|\Delta u|^2 \,\ud x\Big)^{1 / 2}.
$$
It was shown in \cite{EFJCritical1990,LSharp1983} that
\be\label{eq:constantSn}
S_n=\pi^2 n(n-4)(n^2-4) \Big(\frac{\Gamma(\frac{n}{2})}{\Gamma(n)}\Big)^{{4}/{n}},
\ee
where $\Gamma$ is the Gamma function. Next, we study the asymptotic behavior of positive solutions to \eqref{eq:Navier} under the assumption \eqref{eq:leastenergy}. This asymptotic behavior results can be seen as a analog of the results of Han \cite{HAsymptotic1991}, Chou-Geng \cite{CGAsymptotics2000}, and Choi-Kim-Lee \cite{CKLAsymptotic2014}, which is interesting in itself. Finally, we use the asymptotic behavior to prove the uniqueness of solutions to \eqref{eq:Navier}.

This paper is organized as follows: in Section 2, we will give some preliminaries and useful known facts; Section 3 will be devoted to the proof of Theorem \ref{thm:1}; Section 4 will focus on the asymptotic behavior of solutions to \eqref{eq:Navier} as $\varepsilon\to 0$; Section 5 will give the proof of Theorem \ref{thm:2}.

\section{Preliminaries}

In this section, we shall outline some known results, playing a key role in our proofs in the subsequent sections. We denote $B_R(x)$ as the ball in $\mathbb{R}^{n}$ with radius $R$ and center $x$ and $B_R$ as the ball in $\mathbb{R}^n$ with radius $R$ and center 0.

\subsection{Blow up analysis for higher order elliptic equations}

The blow up analysis technique was introduced first by R. Schoen for the prescribed scalar curvature problem. Later, these ideas had been developed by several authors and applied to the Yamabe problem, the Nirenberg problem, and the fractional Nirenberg problem, see, for example, \cite{LPrescribing1995,LZYamabe1999,JLXOn2014,JLXThe2017,NPXCompactness2018,LXCompactness2019}. In this subsection, we recall some results of blow up analysis for higher order elliptic equations obtained by the authors and Niu in \cite{NTZCompactness2022}. In the following, we take $m=2$ for conveniences. One can replace it by any integer.

Let $\Omega$ be an smooth bounded domain in $\mathbb{R}^n$, $n \geq 8$, $p=\frac{n+4}{n-4}$, $0<\varepsilon_{i} <\lambda_1$, and $\{u_{i}\}$ be a sequence of $C^4$ functions satisfying
\be\label{eq:ui}
\begin{cases}
(-\Delta)^2 u_i=\varepsilon_i u_i+u_i^{p} & \text { in }\, \Omega, \\
u_i>0 & \text { in }\, \Omega, \\
u_i=-\Delta u_i=0 & \text { on }\, \partial \Omega.
\end{cases}
\ee
Let $\Omega^{\prime}$ be a compact subset of $\Omega$. We say a point $\bar{x} \in \Omega^{\prime}$ is a blow up point of $\{u_i\}$ if $u_i(x_i) \rightarrow \infty$ for some $x_i \rightarrow \bar{x}$.

\begin{definition}\label{def:isolated}
Let $\{u_i\}$ satisfy \eqref{eq:ui}. A point $\bar{x} \in \Omega^{\prime}$ is called an isolated blow up point of $\{u_i\}$ if there exist $0<\bar{r}<\operatorname{dist}(\bar{x}, \partial \Omega)$, $C>0$, and a sequence $x_i$ tending to $\bar{x}$, such that $x_i$ is a local maximum of $u_i$, $u_i(x_i) \rightarrow \infty$ and
$$
u_i(x) \leq C|x-x_i|^{-(n-4) /2} \quad \text { for all }\, x \in B_{\bar{r}}(x_i).
$$
\end{definition}

Let $x_i \rightarrow \bar{x}$ be an isolated blow up point of $u_i$, we set
$$
\bar{u}_i(r)=\frac{1}{|\partial B_r(x_i)|} \int_{\partial B_r(x_i)} u_i\,\ud s,\quad 0<r<\bar{r},
$$
and
$$
\bar{w}_i(r)=r^{(n-4) /2} \bar{u}_i(r),\quad 0<r<\bar{r}.
$$

\begin{definition}\label{def:isolatedsimple}
We say $x_i \rightarrow \bar{x} \in \Omega^{\prime}$ is an isolated simple blow up point if $x_i \rightarrow \bar{x}$ is an isolated blow up point, such that for some $\rho \in(0, \bar{r})$ (independent of $i$), $\bar{w}_i$ has precisely one critical point in $(0, \rho)$ for large $i$.
\end{definition}

\begin{lemma}\label{lem:IMRN-Harnack}
Let $\{u_i\}$ satisfy \eqref{eq:ui} and $x_i \rightarrow 0$ is an isolated blow up point of $\{u_i\}$, that is, for some positive constants $A_1$ and $\bar{r}$ independent of $i$,
$$
u_i(x) \leq A_1|x-x_i|^{-(n-4)/2}  \quad \text { for all }\, x \in B_{\bar{r}}(x_i) \subset \Omega.
$$
Then for any $0<r<\bar{r} / 3$, we have the following Harnack inequality:
$$
\sup_{B_{2 r}(x_i) \backslash \overline{B_{r / 2}(x_i)}}  u_i \leq C\inf_{B_{2 r}(x_i) \backslash \overline{B_{r / 2}(x_i)}}  u_i,
$$
where $C>0$ depends only on $n$, $A_1$, and $\bar{r}$.
\end{lemma}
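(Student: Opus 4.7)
The plan is to rescale at scale $r$ around $x_i$, reducing the claim to a Harnack inequality on a fixed annulus for a uniformly bounded family of biharmonic functions, and then combine the sign condition $-\Delta u_i \geq 0$ (forced by the Navier boundary) with classical Moser Harnack for the Poisson equation. First I would set $\tilde u_i(y) := r^{(n-4)/2}\,u_i(x_i+ry)$ for $y\in B_{\bar r/r}(0)$. A direct computation gives
$$(-\Delta)^2 \tilde u_i = \bigl(r^4\varepsilon_i+\tilde u_i^{p-1}\bigr)\tilde u_i =: V_i\,\tilde u_i \quad\text{on } B_{\bar r/r}(0),$$
while the isolated blow-up bound transforms into $\tilde u_i(y)\leq A_1|y|^{-(n-4)/2}$, which is uniformly bounded on the fixed annulus $B_{5/2}\setminus\overline{B_{1/4}}$; consequently $V_i$ is uniformly bounded there by a constant depending only on $n$, $A_1$, and $\lambda_1$. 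The stated Harnack on $B_{2r}(x_i)\setminus\overline{B_{r/2}(x_i)}$ is then equivalent to $\sup_{B_2\setminus\overline{B_{1/2}}}\tilde u_i \leq C\inf_{B_2\setminus\overline{B_{1/2}}}\tilde u_i$. Setting $w_i:=-\Delta u_i$, the Navier boundary condition gives $w_i=0$ on $\partial\Omega$ while $-\Delta w_i=\varepsilon_i u_i+u_i^p>0$ in $\Omega$, so the classical maximum principle forces $w_i>0$ in $\Omega$ and hence $\tilde w_i:=-\Delta\tilde u_i\geq 0$ on our annulus.

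With the sign condition in hand, I would split the rescaled biharmonic into the system $-\Delta\tilde u_i=\tilde w_i$, $-\Delta\tilde w_i=V_i\,\tilde u_i$, both sides nonnegative with uniformly bounded coefficients. Interior $W^{4,q}$ estimates for the biharmonic equation with bounded right-hand side yield a uniform $C^{3,\alpha}$ bound for $\tilde u_i$ on the slightly smaller annulus $B_{9/4}\setminus\overline{B_{3/8}}$ and, in particular, a uniform $L^\infty$ bound on $\tilde w_i$. Moser's Harnack for $-\Delta u=f$ with $u\geq 0$ and $f$ bounded, applied component by component to the system, then delivers an additive Harnack of the form $\sup_{B_2\setminus B_{1/2}}\tilde u_i \leq C\bigl(\inf_{B_2\setminus B_{1/2}}\tilde u_i+1\bigr)$.

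The main obstacle is upgrading this additive estimate to a genuinely multiplicative one. I would argue by contradiction: if no uniform multiplicative constant $C$ exists, pass to a subsequence (in both $i$ and $r$) and normalize $\hat u_i:=\tilde u_i/M_i$ where $M_i:=\sup_{B_2\setminus B_{1/2}}\tilde u_i$, so that $\sup\hat u_i=1$ while $\inf\hat u_i\to 0$. Since $M_i$ stays bounded above, the $C^{3,\alpha}$ compactness on the larger annulus extracts a $C^3$-limit $u_\infty\geq 0$ on $B_{9/4}\setminus\overline{B_{3/8}}$ with $\sup_{\overline{B_2}\setminus B_{1/2}}u_\infty=1$ and with $u_\infty(y_\ast)=0$ for some $y_\ast\in\overline{B_2}\setminus B_{1/2}$. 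Because $y_\ast$ lies strictly interior to the larger annulus and $-\Delta u_\infty\geq 0$ there (the sign passes to the limit), the strong maximum principle for superharmonic functions forces $u_\infty\equiv 0$, contradicting $\sup u_\infty=1$. Unscaling then yields the claimed Harnack on $B_{2r}(x_i)\setminus\overline{B_{r/2}(x_i)}$.
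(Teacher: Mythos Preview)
The paper does not prove this lemma itself; it is quoted from \cite{NTZCompactness2022}. The standard argument there, as in the second-order prototype of Li--Zhu, is exactly your first paragraph: rescale by $r$ so that on the fixed annulus $B_{5/2}\setminus\overline{B_{1/4}}$ the function $\tilde u_i$ is uniformly bounded and solves $(-\Delta)^2\tilde u_i=V_i\tilde u_i$ with $\|V_i\|_{L^\infty}\le C(n,A_1,\bar r)$; one then invokes a known multiplicative Harnack inequality for positive solutions of this equation (with $-\Delta\tilde u_i>0$), available e.g.\ from the Harnack theory for cooperative second-order systems or from \cite{CMHarnack2006}. Your rescaling and the positivity $-\Delta u_i>0$ via the Navier boundary are correct and are precisely the ingredients used.

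The gap is in your attempt to manufacture the multiplicative Harnack by contradiction/compactness. You normalize $\hat u_i=\tilde u_i/M_i$ with $M_i=\sup_{B_2\setminus B_{1/2}}\tilde u_i$ and then write ``since $M_i$ stays bounded above, the $C^{3,\alpha}$ compactness on the larger annulus extracts a $C^3$-limit $u_\infty$ with $\sup u_\infty=1$''. But boundedness of $M_i$ from \emph{above} does nothing for $\hat u_i$: the $C^{3,\alpha}$ bound you established is for $\tilde u_i$, and to transfer it to $\hat u_i=\tilde u_i/M_i$ on the larger annulus $B_{9/4}\setminus\overline{B_{3/8}}$ you need $M_i$ bounded \emph{below}. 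Along the contradicting sequence nothing prevents $M_i\to 0$ (indeed, for $r$ much larger than the bubble scale this is exactly what happens), and then $\hat u_i$ has no a~priori bound on the larger annulus, so your limit extraction and the strong-maximum-principle step collapse. In the degenerate case $M_i\to 0$ the compactness of $\tilde u_i$ only yields $\tilde u_\infty\equiv 0$, which gives no contradiction.

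The clean fix is not to try to prove the biharmonic Harnack from scratch here, but---after your correct rescaling---to apply directly a multiplicative Harnack for the positive system $-\Delta\tilde u_i=\tilde w_i$, $-\Delta\tilde w_i=V_i\tilde u_i$ with $\tilde u_i,\tilde w_i>0$ and $\|V_i\|_{L^\infty}\le C$, which is the route taken in \cite{NTZCompactness2022}.
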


\begin{lemma}\label{lem:IMRN-converge}
Suppose that the hypotheses of Lemma \ref{lem:IMRN-Harnack} hold. Then for any $R_i \rightarrow \infty$ and $\varepsilon_i \rightarrow 0^{+}$, we have, after passing to a subsequence (still denoted as $\{u_i\}$, $\{x_i\}$, etc.), that
$$
\|m_i^{-1} u_i(m_i^{-2 / (n-4)} \cdot+x_i)-(1+\bar{c}|\cdot|^2)^{(4-n) / 2}\|_{C^2(B_{2 R_i})} \leq \varepsilon_i,
$$
$$
r_i:=R_i m_i^{-2/ (n-4)} \rightarrow 0 \quad \text { as }\,  i \rightarrow \infty,
$$
where $m_i=u_i(x_i)$ and $\bar{c}>0$ depends only on $n$.
\end{lemma}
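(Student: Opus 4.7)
The plan is to rescale the sequence $\{u_i\}$ about its blow-up point so that the normalized sequence has uniform $C^{4}_{loc}$ bounds and converges to an entire nonnegative solution of the pure critical equation $(-\Delta)^2 V = V^p$ on $\mathbb{R}^n$, and then invoke a Liouville-type classification theorem to identify the limit as the standard Talenti bubble. Concretely, I would introduce
$$
v_i(y) := m_i^{-1}\, u_i\bigl(x_i + m_i^{-2/(n-4)} y\bigr),\qquad y \in B_{\bar r\, m_i^{2/(n-4)}}(0),
$$
and verify, using $p = (n+4)/(n-4)$, that
$$
(-\Delta)^2 v_i \;=\; \varepsilon_i\, m_i^{1-p}\, v_i \;+\; v_i^p,
$$
with $v_i(0)=1$ and $0$ a local maximum of $v_i$. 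Since $p>1$ and $m_i \to \infty$, the linear term disappears in the limit.

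Next I would establish uniform local bounds. The isolated blow-up estimate translates to $v_i(y) \le A_1 |y|^{-(n-4)/2}$ for $0 < |y| \le \bar r\, m_i^{2/(n-4)}$, while the local maximum of $v_i$ at $0$ gives $v_i \le 1$ on some neighborhood of $0$. Together these yield $\|v_i\|_{L^\infty(B_R)} \le C(R)$ for every fixed $R>0$ and all large $i$. Interior $W^{4,q}$ estimates and Schauder theory applied to the rescaled equation (whose right-hand side is then uniformly bounded on each $B_R$) give uniform $C^{4,\alpha}_{loc}(\mathbb{R}^n)$ bounds, so a subsequence converges in $C^{4}_{loc}(\mathbb{R}^n)$ to a limit $V \ge 0$ satisfying $(-\Delta)^2 V = V^p$ on $\mathbb{R}^n$, with $V(0)=1$, $V(y) \le A_1 |y|^{-(n-4)/2}$, and $0$ a local maximum.

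The central step is the identification of $V$ via the Lin--Wei--Xu classification: any nonnegative $C^4$ solution of $(-\Delta)^2 V = V^{(n+4)/(n-4)}$ on $\mathbb{R}^n$ that is not identically zero must be a standard bubble, $V(y) = \alpha_n \bigl( \lambda/(1+\lambda^2 |y-y_0|^2) \bigr)^{(n-4)/2}$. The decay inherited from the isolated blow-up bound and the fact that $V(0)=1$ is a maximum force $y_0 = 0$ and fix $\lambda$, yielding $V(y) = (1 + \bar c\, |y|^2)^{(4-n)/2}$ for a constant $\bar c$ depending only on $n$. I expect this classification step to be the main obstacle, since it is a nontrivial Liouville-type result for a fourth-order equation whose known proofs (moving planes in integral form after Chen--Li--Ou, or Lin's conformal argument) require delicate decay information and a positivity argument to rule out the degenerate case.

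To conclude, having $v_i \to V$ in $C^{2}_{loc}(\mathbb{R}^n)$, a diagonal extraction along $i$ produces a subsequence (still denoted $\{u_i\}$) along which
$$
\bigl\| v_i - V \bigr\|_{C^{2}(B_{2R_i})} \le \varepsilon_i
$$
for the prescribed sequences $R_i \to \infty$ and $\varepsilon_i \to 0^+$; thinning further if necessary we may simultaneously arrange $r_i = R_i\, m_i^{-2/(n-4)} \to 0$, which is always possible because $m_i \to \infty$. Rewriting the estimate in terms of $u_i$ (and noting $V$ is the explicit bubble found above) recovers the statement of the lemma.
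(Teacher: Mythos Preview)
The paper does not actually prove this lemma; it is one of several results recalled without proof in Section~2.1 from the authors' earlier work \cite{NTZCompactness2022}. Your outline is the standard blow-up argument and is essentially what the proof in that reference amounts to: rescale about the blow-up point, obtain uniform local bounds, extract a $C^4_{loc}$ limit, classify it via Lin's Liouville theorem \cite{LA1998}, and finish with a diagonal extraction.

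There is, however, one genuine gap. You assert that the local-maximum property of $v_i$ at $0$ gives $v_i\le 1$ on ``some neighborhood of $0$'', and that together with the decay $v_i(y)\le A_1|y|^{-(n-4)/2}$ this yields $\|v_i\|_{L^\infty(B_R)}\le C(R)$ uniformly in $i$. But the neighborhood on which $v_i\le 1$ may shrink with $i$, and nothing you wrote rules out $\sup_{B_1}v_i\to\infty$ along points drifting toward the origin. The clean fix uses the Harnack inequality of Lemma~\ref{lem:IMRN-Harnack} together with superharmonicity: since the Navier boundary conditions force $-\Delta u_i>0$ in $\Omega$ (by the maximum principle applied to the equivalent system~\eqref{eq:equiveq}), each $v_i$ is superharmonic, so its spherical averages satisfy $\bar v_i(r)\le v_i(0)=1$ for every $r>0$. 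Since $\partial B_r$ lies in the annulus $B_{2r}\setminus\overline{B_{r/2}}$, the annular Harnack inequality then gives
\[
\sup_{B_{2r}\setminus\overline{B_{r/2}}} v_i \;\le\; C\inf_{B_{2r}\setminus\overline{B_{r/2}}} v_i \;\le\; C\inf_{\partial B_r} v_i \;\le\; C\,\bar v_i(r)\;\le\; C,
\]
with $C$ independent of $r$ and $i$. Varying $r$ covers every $B_R\setminus\{0\}$, and continuity at $0$ finishes the uniform bound. Once this is in place, the remainder of your outline (interior $W^{4,q}$ and Schauder estimates, $C^4_{loc}$ compactness, Lin's classification with the centering $y_0=0$ forced by $\nabla V(0)=0$ and the scale fixed by $V(0)=1$, then the diagonal argument) goes through as you describe.
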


\begin{lemma}\label{lem:IMRN-sharp}
Under the hypotheses of Lemma \ref{lem:IMRN-Harnack}, and in addition that $x_i \rightarrow 0$ is also an isolated simple blow up point with the constant $\rho$, we have
$$
u_i(x) \leq C u_i(x_i)^{-1}|x-x_i|^{4-n} \quad \text { for all }\, |x-x_i| \leq 1,
$$
where $C > 0$ depends only on $n$, $A_1$, and $\rho$.
\end{lemma}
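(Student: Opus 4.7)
The strategy I would follow is the one introduced by Schoen and refined by Li--Zhu for the second order case, adapted to the biharmonic setting via the Navier system decomposition $-\Delta u_i = w_i$, $-\Delta w_i = \va_i u_i + u_i^p$ with $u_i = w_i = 0$ on $\partial\Omega$. This splitting restores maximum-principle tools and gives access to the positive Green's function $G(x,y)$ for $(-\Delta)^2$ under Navier conditions, which satisfies the pointwise bound $G(x,y) \leq C|x-y|^{4-n}$. Note first that in the inner region $|x-x_i| \leq R\, m_i^{-2/(n-4)}$ (with $R$ fixed large), Lemma \ref{lem:IMRN-converge} already gives $u_i(x) \leq 2 m_i$, and a direct check shows $m_i \leq C m_i^{-1}|x-x_i|^{4-n}$ on this set; so the work reduces to proving the sharp estimate in the annular region $R\, m_i^{-2/(n-4)} \leq |x-x_i| \leq 1$.

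First I would establish a weak preliminary decay on this annulus. Set $\bar{w}_i(r) = r^{(n-4)/2}\bar{u}_i(r)$ as in Definition \ref{def:isolatedsimple}. Lemma \ref{lem:IMRN-converge} shows that, in the rescaled variables, $\bar{w}_i$ reproduces the unique interior maximum of $r^{(n-4)/2}(1+\bar{c}r^2)^{(4-n)/2}$; since $x_i \to \bar{x}$ is assumed \emph{isolated simple}, this is the only critical point of $\bar{w}_i$ in $(0,\rho)$ for large $i$, so $\bar{w}_i$ is strictly decreasing on $(r_i,\rho)$, where $r_i = R\, m_i^{-2/(n-4)}$. Combining this with the Harnack inequality (Lemma \ref{lem:IMRN-Harnack}) yields
\[
\sup_{\partial B_r(x_i)} u_i \;\leq\; C\, r^{-(n-4)/2}\, \bar{w}_i(r_i),\qquad r_i \leq r \leq \rho,
\]
and Lemma \ref{lem:IMRN-converge} gives $\bar{w}_i(r_i) = o(1)$ as $i \to \infty$. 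This is the standard \emph{small-mass} decay, strictly better than the isolated blow up bound.

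Next I would upgrade this $o(r^{-(n-4)/2})$ decay to the sharp $m_i^{-1} r^{4-n}$ decay by means of the Green's representation
\[
u_i(x) \;=\; \int_{\Omega} G(x,y)\bigl[\va_i u_i(y) + u_i(y)^p\bigr]\,\ud y,
\]
split according to $\Omega = B_{r_i}(x_i) \cup \bigl(B_{\rho}(x_i)\setminus B_{r_i}(x_i)\bigr) \cup \bigl(\Omega \setminus B_{\rho}(x_i)\bigr)$. For the bubble core, the change of variables of Lemma \ref{lem:IMRN-converge} gives $\int_{B_{r_i}(x_i)} u_i^p \,\ud y = O(m_i^{-1})$, hence this piece contributes at most $C m_i^{-1} |x-x_i|^{4-n}$ as desired. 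For the outer piece $|y - x_i| \geq \rho$ and the linear term $\va_i u_i$, uniform boundedness away from $\bar{x}$ (together with the Dirichlet decay of $G$) gives an $O(1)$ contribution, which is negligible compared with $m_i^{-1}|x-x_i|^{4-n}$. The delicate piece is the intermediate annulus, where I would insert the weak decay above to obtain
\[
\int_{B_\rho(x_i)\setminus B_{r_i}(x_i)} G(x,y)\, u_i(y)^p \,\ud y \;\leq\; o(1)\cdot |x-x_i|^{4-n},
\]
which can then be absorbed. Summing the three pieces yields $u_i(x) \leq C m_i^{-1}|x-x_i|^{4-n}$.

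The genuine obstacle is this intermediate annulus estimate: since $u_i^p$ behaves like $|y-x_i|^{-(n+4)/2}$ at the borderline of integrability against $|x-y|^{4-n}$, a naive application of the isolated blow up bound from Definition \ref{def:isolated} gives only $O(|x-x_i|^{4-n}|\log|x-x_i||)$, which is not good enough. Here the isolated \emph{simple} assumption is indispensable: the monotonicity of $\bar{w}_i$ upgrades the $|y-x_i|^{-(n-4)/2}$ bound by an extra $o(1)$ factor, and this is precisely what kills the logarithmic loss and closes the bootstrap.
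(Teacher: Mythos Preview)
The paper does not prove this lemma; it is quoted from \cite{NTZCompactness2022} as background in Section~2.1, so there is no in-paper proof to compare against. I evaluate your outline on its own merits.

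The architecture is right and the inner-region and weak-decay steps are fine, but the Green-representation bootstrap does not close as written. You claim the outer piece $\int_{\Omega\setminus B_\rho(x_i)}G(x,y)[\va_i u_i+u_i^p]\,\ud y$ is an $O(1)$ contribution ``negligible compared with $m_i^{-1}|x-x_i|^{4-n}$''. This is false: for $|x-x_i|$ of order~$1$ the target bound is of order $m_i^{-1}\to 0$, so an $O(1)$ term dominates it rather than being negligible. The same defect afflicts the intermediate annulus: your weak decay yields only $o(1)\cdot|x-x_i|^{4-n}$, not $Cm_i^{-1}|x-x_i|^{4-n}$, and there is nothing on the left-hand side into which this $o(1)$ can be ``absorbed'', since you are aiming at an inequality with right-hand side of size $m_i^{-1}$, not of size $u_i(x)$. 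What is missing is the standard intermediate lemma $m_i\,u_i(x)\leq C$ on a fixed annulus $\rho/2\leq|x-x_i|\leq\rho$, which does \emph{not} follow from the $o(1)\,r^{-(n-4)/2}$ decay alone. In the Li--Zhu scheme this is obtained by a maximum-principle comparison on $B_\rho(x_i)\setminus B_{r_i}(x_i)$: the weak decay makes $u_i^{p-1}$ small enough that one can build a supersolution barrier of the form $A\,m_i^{-1}|x-x_i|^{4-n}+B\,|x-x_i|^{-\mu}$ with $0<\mu<(n-4)/2$, matching the boundary data at $r_i$ (from Lemma~\ref{lem:IMRN-converge}) and at $\rho$; this, rather than the convolution estimate, is where the isolated-simple hypothesis is actually cashed in. Once $u_i\leq Cm_i^{-1}$ at radius $\rho$ is established, the outer piece of your Green representation becomes $O(m_i^{-p})$ and the argument does close.
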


\begin{lemma}\label{lem:IMRN-isomustisosimle}
Suppose that the hypotheses of Lemma \ref{lem:IMRN-Harnack} hold. Then if $n\geq 10$, after passing to a subsequence, $x_i \rightarrow 0$ is an isolated simple blow up point of $\{u_i\}$.
\end{lemma}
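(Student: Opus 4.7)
I would argue by contradiction. Suppose, after passing to a subsequence, $x_i\to 0$ is an isolated but not isolated simple blow up point. Lemma \ref{lem:IMRN-converge} provides the rescaled convergence $m_i^{-1}u_i(m_i^{-2/(n-4)}\cdot+x_i)\to U=(1+\bar c|\cdot|^2)^{(4-n)/2}$ on $B_{2R_i}$, and a direct calculation shows that $r^{(n-4)/2}\bar U(r)$ has a unique critical point. Hence for every fixed $R>0$, $\bar w_i$ has exactly one critical point in $(0,R r_i]$ with $r_i:=R_i m_i^{-2/(n-4)}$. The failure of isolated simplicity then provides a second critical point $s_i$ of $\bar w_i$ in $(0,\rho)$ satisfying $s_i\to 0$ and $s_i/r_i\to\infty$.

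Next, I would rescale at the scale $s_i$ by setting
\[
\xi_i(y):=s_i^{(n-4)/2}u_i(x_i+s_iy),\qquad |y|<\bar r/s_i.
\]
A direct computation gives $(-\Delta)^2\xi_i=\varepsilon_i s_i^{4}\xi_i+\xi_i^{p}$, and the isolated blow up bound yields $\xi_i(y)\le A_1|y|^{-(n-4)/2}$, so $\xi_i$ is locally bounded on $\mathbb{R}^n\setminus\{0\}$. Standard interior $L^q$ elliptic estimates, together with passing to a subsequence, give $\xi_i\to\xi$ in $C^4_{\mathrm{loc}}(\mathbb{R}^n\setminus\{0\})$, where $\xi\ge 0$ solves $(-\Delta)^2\xi=\xi^{p}$ on $\mathbb{R}^n\setminus\{0\}$, carries a genuine singularity at $0$ (because $\xi_i(0)=(s_im_i^{2/(n-4)})^{(n-4)/2}\ge R_i^{(n-4)/2}\to\infty$), and inherits the critical-point relation $\bigl(r^{(n-4)/2}\bar\xi(r)\bigr)'\big|_{r=1}=0$.

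The contradiction will come from the local Pohozaev identity for $(-\Delta)^2$. For the equation in \eqref{eq:ui} the critical term $u_i^{p+1}$ cancels exactly in the Pohozaev relation (because $\frac{n}{p+1}=\frac{n-4}{2}$), leaving
\[
2\varepsilon_i\int_{B_r(x_i)}u_i^2\,\ud x=\mathcal B(u_i,r),
\]
where $\mathcal B(u_i,r)$ is a signed combination of boundary integrals involving $u_i$, $\Delta u_i$, and their normal derivatives on $\partial B_r(x_i)$. I would apply this with $r=s_i\theta$ for a well chosen $\theta\in[1/2,2]$, rewrite both sides in terms of $\xi_i$, and pass to the limit. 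The boundary side converges to a nontrivial quantity in $\xi$ at $|y|=\theta$ whose sign is pinned down by the critical-point relation at $|y|=1$, while the left-hand side equals $2\varepsilon_i s_i^{4}\int_{B_\theta}\xi_i^2\,\ud y$. Splitting this integral into the inner bubble region $|y|\le r_i/s_i$ (where $u_i$ is comparable to $m_i$) and an outer annulus (where the isolated blow up bound applies), the dimensional hypothesis $n\ge 10$ is exactly what is needed so that the left side becomes negligible compared with the boundary side as $i\to\infty$, yielding the desired contradiction.

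The main obstacle I expect is the bookkeeping on the right-hand side: one has to show that the boundary integrals in $\xi_i$ at $|y|=\theta$ stay bounded away from zero along a subsequence—equivalently, that the limit $\xi$ is not a degenerate singular profile for which $\mathcal B$ vanishes identically. This is where either a classification of radial singular solutions of $(-\Delta)^2\xi=\xi^{p}$ on $\mathbb{R}^n\setminus\{0\}$ (via moving planes applied to the cooperative system $-\Delta\xi=w$, $-\Delta w=\xi^{p}$, followed by ODE analysis of $v(t)=e^{-(n-4)t/2}\xi(e^{-t})$), or alternatively a direct oscillation argument exploiting the fact that $s_i$ is the \emph{first} critical point beyond the bubble scale, is needed to close the proof.
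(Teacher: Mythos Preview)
The paper does not prove this lemma; it is quoted from \cite{NTZCompactness2022} in Subsection~2.1 as one of the blow up analysis results recalled without proof. So there is no in-paper argument to compare against; what I can do is assess your outline against the standard proof (which is what \cite{NTZCompactness2022} presumably carries out).

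Your overall architecture---contradict, pick the second critical radius $s_i\to 0$, rescale at scale $s_i$, and feed a local Pohozaev identity---is the right one. The gap is in the identification of the limit. By your own choice of $s_i$ as the \emph{second} critical point of $\bar w_i$, the rescaled function $\xi_i$ has exactly one critical point of $r\mapsto r^{(n-4)/2}\bar\xi_i(r)$ in $(0,1)$; in other words, $0$ is an \emph{isolated simple} blow up point for $\{\xi_i\}$ with $\rho=1$. Lemma~\ref{lem:IMRN-sharp} therefore applies to $\xi_i$ and gives $\xi_i(y)\le C\,\xi_i(0)^{-1}|y|^{4-n}$ on $B_1$, and Harnack (Lemma~\ref{lem:IMRN-Harnack}) propagates this to compact sets of $\mathbb{R}^n\setminus\{0\}$. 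Since $\xi_i(0)\to\infty$, this forces $\xi_i\to 0$ in $C^4_{\mathrm{loc}}(\mathbb{R}^n\setminus\{0\})$. Your claimed limit $\xi$, a nontrivial singular solution of $(-\Delta)^2\xi=\xi^p$, is therefore identically zero, and both sides of your Pohozaev balance vanish---no contradiction.

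The standard repair is to normalize: study $h_i:=\xi_i(0)\,\xi_i$ instead. Then $h_i$ is locally bounded away from $0$, and since both $\varepsilon_i s_i^4\to 0$ and $\xi_i^{p-1}\to 0$, the equation $(-\Delta)^2 h_i=\varepsilon_i s_i^4 h_i + h_i\,\xi_i^{p-1}$ passes to the \emph{linear} limit $(-\Delta)^2 h=0$ on $\mathbb{R}^n\setminus\{0\}$. A B\^ocher-type decomposition gives $h(y)=a|y|^{4-n}+b(y)$ with $a>0$ and $b$ regular biharmonic; the critical-point information at $r=1$ then constrains the sign of $b(0)$, and the local Pohozaev identity (multiplied by $\xi_i(0)^2$) produces a sign contradiction. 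The hypothesis $n\ge 10$ enters when you show that the volume term $\varepsilon_i s_i^4\,\xi_i(0)^2\int_{B_\theta}\xi_i^2$ is $o(1)$; this is where the estimates from Lemma~\ref{lem:IMRN-sharp} and $n\ge 10$ combine. Your concern about classifying nonlinear singular solutions of $(-\Delta)^2\xi=\xi^p$ is therefore misplaced: after the correct normalization the limit is biharmonic, and no such classification is needed.
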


\subsection{Pohozaev identity}

In this subsection, we present some Pohozaev identities, these identities are crucial in the proof of the asymptotic behavior of positive solutions to \eqref{eq:Navier}.

\begin{lemma}[Pohozaev identity]\label{lem:Pohozaev}
Let $\Omega$ be an smooth bounded domain in $\mathbb{R}^n$ with $n \geq 9$, and let $u$ be a solution of \eqref{eq:Navier}. Then the following identity holds:
\be\label{eq:Pohozaev}
2 \varepsilon \int_{\Omega} u^{2} \,\ud x=\int_{\partial \Omega} \frac{\partial u}{\partial \nu}\frac{\partial (-\Delta u)}{\partial \nu}(x \cdot \nu) \,\ud s,
\ee
where $\nu$ is the unit outward normal vector of $\partial \Omega$.
\end{lemma}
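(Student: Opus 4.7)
The plan is to derive the identity by combining two integral identities obtained from multiplying the equation $\Delta^2 u = \varepsilon u + u^p$ by two different test functions and integrating by parts, exploiting the criticality $p+1 = \tfrac{2n}{n-4}$.

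First I would multiply the equation by the Pohozaev multiplier $x \cdot \nabla u$ and integrate on $\Omega$. For the left-hand side I would integrate by parts twice using Green's second identity, picking up boundary terms of the form $(\partial \Delta u/\partial \nu)v$ and $\Delta u (\partial v/\partial \nu)$ with $v = x \cdot \nabla u$. Since $\Delta u = 0$ on $\partial \Omega$, the second of these vanishes; since $u = 0$ on $\partial \Omega$, the tangential derivative of $u$ vanishes and $\nabla u = (\partial u/\partial \nu)\nu$ on $\partial \Omega$, so the surviving boundary term reduces to
\[
\int_{\partial \Omega} \frac{\partial \Delta u}{\partial \nu}(x \cdot \nu)\frac{\partial u}{\partial \nu}\,\ud s.
\]
The interior piece becomes $\int_\Omega \Delta u \cdot \Delta(x \cdot \nabla u)\,\ud x$, and the standard identity $\Delta(x\cdot \nabla u) = x\cdot \nabla \Delta u + 2\Delta u$ together with one more integration by parts on $\int_\Omega \Delta u(x \cdot \nabla \Delta u) \,\ud x = \tfrac12 \int_\Omega x \cdot \nabla(\Delta u)^2\,\ud x$ (whose boundary term again vanishes because $\Delta u = 0$) reduces this to $\tfrac{4-n}{2}\int_\Omega (\Delta u)^2\,\ud x$. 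For the right-hand side I would use the analogous calculation on $\int_\Omega u(x \cdot \nabla u)\,\ud x = -\tfrac{n}{2}\int_\Omega u^2\,\ud x$ and $\int_\Omega u^p(x \cdot \nabla u)\,\ud x = -\tfrac{n}{p+1}\int_\Omega u^{p+1}\,\ud x$, where both boundary terms vanish thanks to $u = 0$ on $\partial \Omega$.

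Second, I would multiply the equation by $u$ itself and integrate by parts twice; both boundary terms vanish because $u = \Delta u = 0$ on $\partial \Omega$, giving the energy identity
\[
\int_\Omega (\Delta u)^2\,\ud x = \varepsilon \int_\Omega u^2\,\ud x + \int_\Omega u^{p+1}\,\ud x.
\]
Multiplying this by $\tfrac{n-4}{2}$ and recalling that criticality gives $\tfrac{n-4}{2} = \tfrac{n}{p+1}$, the $\int_\Omega u^{p+1}\,\ud x$ terms cancel exactly when I subtract the two identities, and the remaining coefficients in front of $\int_\Omega u^2\,\ud x$ collapse to $-\tfrac{n}{2}\varepsilon + \tfrac{n-4}{2}\varepsilon = -2\varepsilon$. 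Flipping the sign via $\partial \Delta u/\partial \nu = -\partial(-\Delta u)/\partial \nu$ then produces \eqref{eq:Pohozaev}.

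There is no real obstacle here; the only subtlety is bookkeeping of boundary terms during the two integrations by parts, where one must carefully use that $u = 0$ on $\partial \Omega$ forces $\nabla u$ to be normal to $\partial \Omega$ in order to write $x \cdot \nabla u = (x\cdot \nu)\partial u/\partial \nu$ on the boundary, and that $\Delta u = 0$ on $\partial \Omega$ kills every term where $\Delta u$ appears undifferentiated. The assumption $n \geq 9$ is not needed for the identity itself; it is inherited from the regularity setting used elsewhere in the paper.
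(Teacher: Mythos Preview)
Your derivation is correct: the two multipliers $x\cdot\nabla u$ and $u$, combined with the criticality relation $\tfrac{n}{p+1}=\tfrac{n-4}{2}$ and the Navier boundary data $u=\Delta u=0$, produce exactly \eqref{eq:Pohozaev}, and your handling of the boundary terms is accurate. The paper itself does not give a proof but simply cites \cite[Theorem~7.29]{GGSPolyharmonic2010}; what you have written is precisely the standard argument that reference carries out, so there is no substantive difference to compare.
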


\begin{proof}
The proof can be found in \cite[Theorem 7.29]{GGSPolyharmonic2010}.
\end{proof}

Let $G=G(x, y)$ denote the Green function of $(-\Delta)^2$ under the Navier boundary condition:
$$
\begin{cases}
(-\Delta)^2 G(\cdot, y)=\delta_y(\cdot) & \text { in }\, \Omega, \\
G(\cdot, y)=-\Delta G(\cdot, y)=0 & \text { on }\, \partial \Omega,
\end{cases}
$$
where $\delta_y$ is the Dirac function. Decompose $G$ as $G(x, y)=\Gamma(x, y)+g(x, y)$, where $\Gamma(x, y)$ is the fundamental solution of $(-\Delta)^2$ on $\mathbb{R}^n$ defined as
\be\label{eq:fundsolu}
\Gamma(x, y)=
\begin{cases}
\sigma_{n}|x-y|^{4-n}, & n>4, \\
\displaystyle\sigma_4 \log \frac{1}{|x-y|}, & n=4,
\end{cases}
\ee
where
$$
\sigma_{n}=\frac{1}{2(n-2)(n-4) \omega_n},\quad n>4;\quad \sigma_4=\frac{1}{4 \omega_{4}},
$$
and $\omega_n$ is the volume of the $(n-1)$ dimensional unit sphere in $\mathbb{R}^n$. $g(x, y) \in C^{\infty}(\Omega \times \Omega)$ is called the regular part of the Green function, and satisfies
\be\label{eq:regupart}
\begin{cases}
(-\Delta)^2 g(\cdot, y)=0 & \text { in }\, \Omega,  \\
g(\cdot, y)=-\Gamma(\cdot,y) & \text { on }\, \partial\Omega,\\
\Delta g(\cdot, y)=-\Delta \Gamma(\cdot,y) & \text { on }\, \partial\Omega.
\end{cases}
\ee
For any $y \in \Omega$, we denote $R(y):=g(y, y)$, which is called the Robin function of the Green function of $(-\Delta)^2$ with the Navier boundary condition.

\begin{lemma}[Pohozaev identity for the Green function]\label{lem:PohozaevGreen}
Let $\Omega$ be an smooth bounded domain in $\mathbb{R}^n$ with $n \geq 9$. For any $y \in \Omega$,
$$
\int_{\partial \Omega}\frac{\partial G(x,y)}{\partial \nu} \frac{\partial (-\Delta_xG(x,y))}{\partial \nu}(x \cdot \nu)  \,\ud s_{x}=-\frac{n-4}{2} R(y),
$$
where $\nu$ is the unit outward normal vector of $\partial \Omega$.
\end{lemma}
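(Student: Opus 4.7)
The plan is to apply the biharmonic Pohozaev identity to $G(\cdot,y)$ on the pierced domain $\Omega_\delta := \Omega \setminus \overline{B_\delta(y)}$ and then let $\delta \to 0$. Since $(-\Delta)^2 G(\cdot, y) = 0$ on $\Omega_\delta$, the volume integral in the identity vanishes, so we are left with an equality between boundary integrals on $\partial\Omega$ and on $\partial B_\delta(y)$.

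Concretely, I would redo the integration by parts underlying Lemma~\ref{lem:Pohozaev}: multiply $(-\Delta)^2 G$ by the Pohozaev multiplier $x \cdot \nabla_x G + \tfrac{n-4}{2} G$ and integrate by parts twice on $\Omega_\delta$. The standard computation (cf.\ \cite[Thm.~7.29]{GGSPolyharmonic2010}) produces an identity of the form
\[
0 = \int_{\partial \Omega_\delta} \Phi(G, x, \nu)\,\ud s_x,
\]
where $\Phi$ is a quadratic boundary expression in $G$, $\nabla G$, $\Delta G$, $\nabla(\Delta G)$; no volume term remains, since both $(-\Delta)^2 G$ and the critical nonlinearity are absent. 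On $\partial\Omega$, the Navier conditions $G = -\Delta G = 0$ annihilate every summand of $\Phi$ except $\frac{\partial G}{\partial\nu}\frac{\partial(-\Delta G)}{\partial\nu}(x\cdot\nu)$, which is precisely the integrand on the left-hand side of the claim; this reduction is formally identical to the one performed in Lemma~\ref{lem:Pohozaev}.

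The content of the lemma is therefore encoded in the limit as $\delta \to 0$ of $\Phi$ integrated over $\partial B_\delta(y)$, where the outward normal of $\Omega_\delta$ points inward toward $y$. Substituting $G = \Gamma + g$ from \eqref{eq:fundsolu}--\eqref{eq:regupart} and splitting $\Phi$ into pure $\Gamma\Gamma$, cross $\Gamma g$, and pure $gg$ contributions, I expect: \emph{(i)} the $gg$ pieces vanish, since $g$ and its derivatives are smooth at $y$ and the sphere has area $O(\delta^{n-1})$; \emph{(ii)} the $\Gamma\Gamma$ pieces are radial about $y$, so after splitting $x\cdot\nu = y\cdot\nu + (x-y)\cdot\nu$ the $y\cdot\nu$ contribution integrates to zero by spherical symmetry and the $(x-y)\cdot\nu = -\delta$ contribution produces only lower-order terms that vanish as $\delta\to 0$ when $n>4$; \emph{(iii)} the cross $\Gamma g$ pieces, after computing $\partial_\nu \Gamma \sim c\,\delta^{3-n}$ and $\partial_\nu(-\Delta\Gamma) \sim c'\delta^{1-n}$ on $\partial B_\delta(y)$ (with $-\Delta\Gamma$ being, up to a constant, the fundamental solution of $-\Delta$), and integrating against the smooth functions $g(\cdot,y)$, $\nabla_x g(\cdot,y)$, $\Delta_x g(\cdot,y)$ evaluated at points converging to $y$, produce the finite value $-\tfrac{n-4}{2}\, g(y,y) = -\tfrac{n-4}{2} R(y)$.

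The delicate step is the bookkeeping of the cross-term computation in \emph{(iii)}: one must combine several singular summands of $\Phi$ coming from different pieces of the Pohozaev multiplier, track the sign change from the inward orientation on $\partial B_\delta(y)$, and use the split $x = y + (x-y)$ to isolate the Robin contribution (driven by $y\cdot\nu$ in the multiplier) from spherically symmetric pieces that integrate to zero. Pinning down the precise numerical constant $-\tfrac{n-4}{2}$ is the analogue, in the biharmonic setting, of the classical computation for the Dirichlet Green function of $-\Delta$; once this algebra is carried through, the proof is complete.
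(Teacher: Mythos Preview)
The paper does not supply its own argument for this lemma; it simply cites \cite[Theorem~3.1]{TSome2013}. Your outline --- apply the biharmonic Pohozaev identity with multiplier $x\cdot\nabla G+\tfrac{n-4}{2}G$ on the excised domain $\Omega\setminus\overline{B_\delta(y)}$, use the Navier conditions to reduce the $\partial\Omega$ contribution to the claimed integrand, split $G=\Gamma+g$ on $\partial B_\delta(y)$, and let $\delta\to 0$ --- is exactly the standard route, and is the one taken in the cited reference. One small refinement to your item (ii): with the radial part $(x-y)\cdot\nabla+\tfrac{n-4}{2}$ of the multiplier, the pure $\Gamma\Gamma$ boundary contribution on $\partial B_\delta(y)$ is in fact \emph{identically} zero (a direct computation with $\Gamma=\sigma_n|x-y|^{4-n}$ and $-\Delta\Gamma=2(n-4)\sigma_n|x-y|^{2-n}$ shows the three summands cancel exactly), rather than merely lower order; this shortens the bookkeeping you flag as delicate.
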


\begin{proof}
The proof can be found in \cite[Theorem 3.1]{TSome2013}.
\end{proof}

\subsection{Some estimates}

In this subsection, we present some elliptic estimates that will be used frequently in our proof.

\begin{lemma}\label{lem:localbound}
Let $u$ be a weak solution of
$$
(-\Delta)^{2} u=a(x) u \quad \text { in }\, \Omega,
$$
where $a \in L_{loc}^{\alpha}(\Omega)$ with $\alpha>n / 4$. Then for any $q >0$, there exist $C=C(q)>0$ and $R>0$, such that for any $0<r<R$ and $y \in \mathbb{R}^{n}$, we have
$$
\sup _{\Omega \cap B_r(y)}|u(x)| \leq C\Big(\frac{1}{r^{n}} \int_{\Omega \cap B_{2 r}(y)}|u(x)|^{q} \,\ud x\Big)^{1 /q}.
$$
\end{lemma}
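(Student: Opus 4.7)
My plan is a Moser-type iteration adapted to the fourth-order operator; the argument is classical in spirit but requires more bookkeeping than the second-order case. Fix $y\in\R^n$, $r>0$, and a cutoff $\eta\in C_0^\infty(\R^n)$ with $\eta\equiv 1$ on $B_r(y)$, $\mathrm{supp}\,\eta\subset B_{2r}(y)$, and $|D^k\eta|\le C r^{-k}$ for $k\le 4$. For $\beta\ge 1$ I would test the weak formulation $\int\Delta u\,\Delta\phi\,\ud x=\int au\phi\,\ud x$ against $\phi=\eta^4 u\,u_\delta^{2\beta-2}$, where $u_\delta=(u^2+\delta^2)^{1/2}$ keeps $\phi$ admissible and $\delta\to 0$ at the end. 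Expanding $\Delta\phi$ by the product rule and absorbing cross terms through Young's inequality produces an energy estimate of the form
$$
\int \eta^4 u_\delta^{2\beta-2}(\Delta u)^2\,\ud x + \beta^2\int \eta^4 u_\delta^{2\beta-4}|\nabla u|^4\,\ud x \le C\beta^{C}\Big(r^{-4}\int_{B_{2r}(y)} u_\delta^{2\beta}\,\ud x+\int_{B_{2r}(y)}|a|\,\eta^4 u_\delta^{2\beta}\,\ud x\Big).
$$
Setting $w=\eta^2 u_\delta^\beta$, a direct computation bounds $\|\Delta w\|_{L^2}^2$ by the left-hand side plus $r^{-4}\int_{B_{2r}} u_\delta^{2\beta}\,\ud x$.

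Next, the Sobolev embedding $H_0^2\hookrightarrow L^{2n/(n-4)}$ gives $\|w\|_{L^{2n/(n-4)}}^2\le C\|\Delta w\|_{L^2}^2$. The hypothesis $\alpha>n/4$ is exactly what makes the $|a|$-term absorbable: its conjugate exponent $\alpha'$ satisfies $\alpha'<n/(n-4)$, so by Hölder and interpolation between $L^2$ and $L^{2n/(n-4)}$,
$$
\int|a|\,\eta^4 u_\delta^{2\beta}\,\ud x\le \|a\|_{L^\alpha(B_{2r})}\,\|w\|_{L^{2n/(n-4)}}^{2\theta}\,\|w\|_{L^2}^{2(1-\theta)},\qquad \theta=\frac{n}{4\alpha}\in(0,1).
$$
Choosing $r>0$ small enough that $\|a\|_{L^\alpha(B_{2r})}$ is small relative to the Sobolev constant and applying Young's inequality to absorb the $L^{2n/(n-4)}$-factor yields the reverse Hölder inequality
$$
\Big(\int_{B_r(y)}|u|^{2\beta\,n/(n-4)}\,\ud x\Big)^{(n-4)/n}\le (C\beta)^{C}\,r^{-4}\int_{B_{2r}(y)}|u|^{2\beta}\,\ud x.
$$

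Iterating with $\beta_k=(n/(n-4))^k$ on nested radii $r_k=r(1+2^{-k})/2$ and summing the convergent series $\sum\beta_k^{-1}\log\beta_k$ yields
$$
\sup_{B_{r/2}(y)\cap\Omega}|u|\le C\Big(\frac{1}{r^n}\int_{B_r(y)\cap\Omega}|u|^2\,\ud x\Big)^{1/2}.
$$
To upgrade from the exponent $q=2$ to arbitrary $q>0$, I would invoke the standard interpolation trick: for $0<q<2$ write $\|u\|_{L^2}\le\|u\|_{L^\infty}^{1-q/2}\|u\|_{L^q}^{q/2}$, apply this on a sequence of nested balls between $B_r$ and $B_{2r}$, and absorb the $L^\infty$-factor via Young's inequality; for $q\ge 2$ Hölder suffices. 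The main obstacle is the energy estimate in the first display: expanding $\Delta(\eta^4 u\,u_\delta^{2\beta-2})$ yields many mixed terms in $\nabla u$, $\Delta u$, and derivatives of $\eta$, and each must be controlled by a small multiple of the two leading quantities on the left without wasting the $\beta$-power that drives the iteration. Once that inequality is in place, the rest is standard Moser iteration, with the hypothesis $\alpha>n/4$ serving precisely as the threshold that makes the nonlinear term fit inside the scheme via Sobolev embedding.
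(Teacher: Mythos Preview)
The paper does not give its own proof of this lemma: it simply refers to Caristi--Mitidieri \cite[Theorem 4.9, Definition 2.2, Example 2.4]{CMHarnack2006}. Your Moser-type iteration is precisely the standard route to such a local $L^\infty$ bound for biharmonic equations and is in the same spirit as that reference, so there is nothing to compare on the level of strategy. Your sketch is essentially sound; the identification $\theta=n/(4\alpha)<1$ is exactly the reason the hypothesis $\alpha>n/4$ is critical, and the polynomial growth in $\beta$ you pick up is harmless for the convergence of $\sum\beta_k^{-1}\log\beta_k$.

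Two small points deserve tightening. First, the remark about ``choosing $r$ small so that $\|a\|_{L^\alpha(B_{2r})}$ is small'' is misleading: once you interpolate with $\theta<1$ and apply Young's inequality, the $L^{2n/(n-4)}$-factor is absorbed regardless of the size of $\|a\|_{L^\alpha}$; what matters is that the $\epsilon$ in Young's inequality must be taken of order $\beta^{-C}$ (because the energy estimate already carries a $\beta^C$), and the resulting $C_\epsilon\sim\beta^{C\theta/(1-\theta)}$ is still polynomial. Smallness of $r$ plays no role in the absorption step. Second, the lemma as stated (and as used later in the paper, e.g.\ in the proof of Proposition~\ref{pro:wvareLinftyBR<C}) allows the ball $B_r(y)$ to meet $\partial\Omega$, so your test function $\phi=\eta^4 u\,u_\delta^{2\beta-2}$ must be admissible up to the boundary. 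In the paper's applications this is fine because the Navier conditions force $u=0$ on $\partial\Omega$, hence $\phi=0$ there; but as written your argument is purely interior, and you should note where the boundary condition enters to justify the integration by parts. Neither of these is a genuine gap.
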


\begin{proof}
The proof can be found in \cite[Theorem 4.9, Definition 2.2, Example 2.4]{CMHarnack2006}.
\end{proof}

\begin{lemma}\label{lem:esti-unablau}
Let $u$ satisfies
$$
\begin{cases}
-\Delta u=f & \text { in }\, \Omega, \\
u=0 & \text { on }\, \partial \Omega.
\end{cases}
$$
Let $\omega^{\prime} \subset \subset \omega$ be two neighborhoods of $\partial \Omega$. Then
$$
\|u\|_{W^{1, q}(\Omega)} \leq C\|f\|_{L^{1}(\Omega)}
$$
and
$$
\|u\|_{C^{1, \alpha}(\omega^{\prime})} \leq C(\|f\|_{L^{1}(\Omega)}+\|f\|_{L^{\infty}(\omega)})
$$
hold for $q\in [1, \frac{n}{n-1})$, $\alpha \in(0,1)$. Here $C$ is independent of $u$ and $f$.
\end{lemma}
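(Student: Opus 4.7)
\textbf{Proof proposal for Lemma \ref{lem:esti-unablau}.} The plan is to handle the two estimates separately, using the Green's function for $-\Delta$ on $\Omega$ with zero Dirichlet boundary condition. Let $G(x,y)$ denote this Green function; on a smooth bounded domain one has the standard pointwise bounds
$$
|G(x,y)|\le C|x-y|^{2-n},\qquad |\nabla_x G(x,y)|\le C|x-y|^{1-n}.
$$
For the first estimate, represent $u$ as $u(x)=\int_\Omega G(x,y)f(y)\,\ud y$ and differentiate under the integral sign. By Minkowski's integral inequality,
$$
\|\nabla u\|_{L^q(\Omega)}\le \int_\Omega |f(y)|\,\|\nabla_x G(\cdot,y)\|_{L^q(\Omega)}\,\ud y\le C\|f\|_{L^1(\Omega)},
$$
since $|x-y|^{1-n}\in L^q_{y\text{-unif.}}(\Omega)$ exactly when $(n-1)q<n$, i.e.\ $q<n/(n-1)$. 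The same computation with the bound on $G$ itself (even for larger $q$, since $2-n > 1-n$) controls $\|u\|_{L^q(\Omega)}$. Summing gives the claimed $W^{1,q}$ estimate.

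For the second estimate I would split $f=f_1+f_2$ where $f_1=f\chi_\omega$ and $f_2=f\chi_{\Omega\setminus\omega}$, and correspondingly decompose $u=u_1+u_2$ with each piece solving $-\Delta u_j=f_j$ in $\Omega$ with zero Dirichlet data. The part $u_1$ carries the bounded right-hand side: since $\|f_1\|_{L^p(\Omega)}\le |\Omega|^{1/p}\|f\|_{L^\infty(\omega)}$, Calder\'on–Zygmund $L^p$ theory up to the boundary yields $\|u_1\|_{W^{2,p}(\Omega)}\le C\|f\|_{L^\infty(\omega)}$, and for $p>n/(1-\alpha)$ Morrey's embedding gives
$$
\|u_1\|_{C^{1,\alpha}(\overline\Omega)}\le C\|f\|_{L^\infty(\omega)}.
$$
The part $u_2$ carries the rough $L^1$ datum but is harmonic in $\omega\cap\Omega$ and vanishes on $\partial\Omega\cap\omega$. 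Standard interior and boundary regularity for harmonic functions (locally straightening $\partial\Omega$ and reflecting) yields
$$
\|u_2\|_{C^{1,\alpha}(\omega'\cap\overline\Omega)}\le C\|u_2\|_{L^1(\omega\cap\Omega)}\le C\|u_2\|_{W^{1,q}(\Omega)}\le C\|f\|_{L^1(\Omega)},
$$
where the last inequality is exactly the first part of the lemma applied to $u_2$ with datum $f_2$. Combining the estimates on $u_1$ and $u_2$ on $\omega'$ gives the desired bound.

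The only real technical point is the passage to $C^{1,\alpha}$ regularity up to $\partial\Omega$ for both pieces: for $u_1$ one needs the global Calder\'on–Zygmund estimate with Navier (here Dirichlet) data, which holds because $\Omega$ is smooth, and for $u_2$ one needs boundary Schauder-type regularity for harmonic functions with zero boundary values, obtainable by local flattening of $\partial\Omega$ and odd reflection. Given these standard ingredients, the decomposition is clean and no finer estimate on $f$ is needed beyond $L^1$ globally and $L^\infty$ on the neighborhood $\omega$.
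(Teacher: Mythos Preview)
Your argument is correct and is the standard route to this estimate: Riesz-potential bounds on the Green function for the $W^{1,q}$ part, and the decomposition $f=f\chi_\omega+f\chi_{\Omega\setminus\omega}$ combined with global $L^p$ theory and boundary regularity of harmonic functions for the $C^{1,\alpha}$ part. The paper itself does not supply a proof here but simply cites \cite[Lemma~2]{HAsymptotic1991}; your write-up is essentially the argument one would expect to find behind that citation, so there is no meaningful difference in approach to compare.
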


\begin{proof}
The proof can be found in \cite[Lemma 2]{HAsymptotic1991}.
\end{proof}

\section{Proof of Theorem \ref{thm:1}}

In this section, we give the proof of Theorem \ref{thm:1}.

In \cite{NTZCompactness2022}, the authors and Niu used blow up analysis to obtain the following compactness result for the solutions of higher order critical elliptic equations:

\begin{proposition}[\cite{NTZCompactness2022}]\label{pro:IMRN}
Suppose that $1 \leq m \leq n / 4$ and $m$ is an integer. Let $u \in C^{2 m}(B_3)$ be a nonnegative solution of
$$
(-\Delta)^m u-a(x) u=u^{\frac{n+2 m}{n-2 m}} \quad \text { in }\, B_3,
$$
where $a(x)$ is a nonnegative smooth function in $B_3$. Suppose
$$
(-\Delta)^k u \geq 0 \quad \text { in }\, B_3, \quad k=1, \cdots, m-1.
$$
If either
\begin{enumerate}[(i)]
  \item
$a>0$ in $B_2$ and $n \geq 4 m$, or
  \item
$\Delta a>0$ in $\{x \mid a(x)=0\} \cap B_2$ and $n \geq 4 m+2$
\end{enumerate}
holds, then
$$
\|u\|_{C^{2 m}(B_1)} \leq C,
$$
where $C>0$ depends only on $n, m,\|a\|_{C^4(B_3)}$ and $\inf _{B_2} a$ if $(i)$ holds; otherwise, it depends only on $n, m,\|a\|_{C^4(B_3)}$ and $\inf _{\{x \mid a(x)=0\} \cap B_2} \Delta a$.
\end{proposition}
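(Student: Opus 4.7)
The plan is a classical blow-up-plus-Pohozaev argument by contradiction, in the spirit of the Schoen--Li--Zhu compactness theory for the Yamabe problem, adapted to the polyharmonic setting. Suppose the stated bound fails. Then for some constant $A$ and some $\eta > 0$ one can produce sequences $a_i \in C^4(B_3)$ with $\|a_i\|_{C^4(B_3)} \le A$, satisfying uniformly either $(i)$ $\inf_{B_2} a_i \ge \eta$ or $(ii)$ $\inf_{\{a_i=0\}\cap B_2} \Delta a_i \ge \eta$, together with corresponding nonnegative solutions $u_i \in C^{2m}(B_3)$ satisfying the sign hypotheses $(-\Delta)^k u_i \ge 0$ and with $\max_{B_1} u_i \to \infty$.

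First I would run Schoen's iterative selection process to extract, after a subsequence, finitely many blow-up points $\bar x^{(1)}, \ldots, \bar x^{(N)}$ in $\overline{B_{3/2}}$ and sequences $x_i^{(j)} \to \bar x^{(j)}$ of local maxima of $u_i$ realizing the decay estimate of Definition \ref{def:isolated}. The Harnack inequality (the analog of Lemma \ref{lem:IMRN-Harnack} for general $m$) applied away from these points reduces the analysis to the behavior near a single $x_i^{(j)}$. Rescaling by $u_i(x_i^{(j)})^{2/(n-2m)}$ and passing to the limit yields, via the Liouville classification of entire positive solutions of $(-\Delta)^m U = U^{(n+2m)/(n-2m)}$ on $\mathbb{R}^n$ in the positive cone (where the sign hypotheses on $(-\Delta)^k u$ are essential to preserve positivity of iterated Laplacians under rescaling), the standard bubble profile, exactly as in Lemma \ref{lem:IMRN-converge}.

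Next I would upgrade each isolated blow-up point to an isolated simple blow-up point (the analog of Lemma \ref{lem:IMRN-isomustisosimle}), which unlocks the sharp pointwise decay
\[
u_i(x) \le C\, u_i(x_i^{(j)})^{-1}\, |x - x_i^{(j)}|^{2m-n}, \qquad |x - x_i^{(j)}| \le 1,
\]
as in Lemma \ref{lem:IMRN-sharp}. With these ingredients, I would apply a Pohozaev identity for $(-\Delta)^m$: testing the equation against the conformal vector field $(x-x_i)\cdot\nabla u_i + \tfrac{n-2m}{2} u_i$ and integrating over $B_\rho(x_i)$ with $\rho$ small but fixed, the critical nonlinearity $u_i^p$ contributes only boundary terms (its integrand is a total divergence by conformal invariance), while the lower-order term $a_i u_i$ contributes an interior integral of the form
\[
c_{n,m} \int_{B_\rho(x_i)} a_i(x) u_i^2 \,\ud x \;+\; c'_{n,m}\int_{B_\rho(x_i)} \bigl((x-x_i)\cdot \nabla a_i(x)\bigr) u_i^2 \,\ud x,
\]
with explicit positive constants $c_{n,m}$.

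Setting $m_i := u_i(x_i^{(j)})$, the sharp estimate above forces the total boundary contribution to be $O(m_i^{-2})$ with strictly positive leading coefficient computed from the limit bubble. In case $(i)$, $a_i(\bar x^{(j)}) \ge \eta > 0$ forces $\int a_i u_i^2 \,\ud x \gtrsim m_i^{-4m/(n-2m)}$, with an extra $\log m_i$ at the borderline $n = 4m$; since $4m/(n-2m) \le 2$ iff $n \ge 4m$, the interior strictly dominates the boundary, a contradiction. In case $(ii)$, $a_i$ achieves its minimum at $\bar x^{(j)}$, so $\nabla a_i(\bar x^{(j)}) = 0$ and Taylor expansion combined with $\Delta a_i(\bar x^{(j)}) \ge \eta$ gives $\int a_i u_i^2 \gtrsim \int |x-\bar x^{(j)}|^2 u_i^2 \gtrsim m_i^{-(4m+4)/(n-2m)}$, which dominates $m_i^{-2}$ exactly when $n \ge 4m+2$; again a contradiction. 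The main obstacle will be the isolated-to-isolated-simple upgrade, which itself requires a separate Pohozaev argument on moving balls $B_r(x_i)$ with $r \in (r_i, \rho)$ to rule out secondary critical points of the radial average $\bar w_i(r) = r^{(n-2m)/2}\bar u_i(r)$; once that is in place, the dimensional thresholds $4m$ and $4m+2$ drop out of the scaling calculus in an essentially elementary way.
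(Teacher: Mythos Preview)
The present paper does not prove this proposition: it is imported verbatim from the authors' earlier work \cite{NTZCompactness2022} (note the citation in the proposition heading), and the lemmas of Subsection~2.1 are the supporting ingredients drawn from the same source. Your sketch accurately reproduces the structure of the proof in that reference---contradiction, Schoen-type selection of isolated blow-up points, Liouville classification after rescaling (where the sign hypotheses $(-\Delta)^k u\ge 0$ are indeed what make the polyharmonic Liouville theorem applicable), upgrade to isolated simple blow-up yielding the sharp $|x-x_i|^{2m-n}$ decay, and a local Pohozaev balance whose interior-versus-boundary scaling produces exactly the thresholds $n\ge 4m$ and $n\ge 4m+2$.

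One small refinement in case~(ii): you assert that $a_i$ attains its minimum at the blow-up point $\bar x^{(j)}$, but this is not automatic. The correct dichotomy is: either $a_i(\bar x^{(j)})>0$, in which event the case~(i) argument applies directly (legitimate since $n\ge 4m+2>4m$), or $a_i(\bar x^{(j)})=0$, in which event nonnegativity of $a_i$ forces $\bar x^{(j)}$ to be a minimum, hence $\nabla a_i(\bar x^{(j)})=0$, and your Taylor expansion driven by $\Delta a_i(\bar x^{(j)})\ge\eta$ then goes through as written.
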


The key ingredient in the proof of Theorem \ref{thm:1} is the following a priori estimates.

\begin{proposition}\label{pro:uinfleqC}
There exist some positive constants $\delta$ and $C=C(\delta)$ such that, if $u_{\varepsilon}$ is a solution of \eqref{eq:higherorder} for $\varepsilon \in(\lambda_1-\delta, \lambda_1)$, then
$$
\|u_{\varepsilon}\|_{L^{\infty}(\Omega)} \leq C.
$$
\end{proposition}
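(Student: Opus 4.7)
The plan is to argue by contradiction. Suppose there exist $\varepsilon_i \to \lambda_1^-$ and solutions $u_i$ of \eqref{eq:higherorder} with $m_i := \|u_i\|_{L^\infty(\Omega)} \to \infty$. I would separately rule out blow-up in a tubular neighborhood of $\partial\Omega$ and blow-up in the interior.

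\textbf{Step 1: Boundary bound via moving planes.} Convert \eqref{eq:higherorder} into a cooperative second-order system by setting $v_i^{(k)} := (-\Delta)^k u_i$ for $k=0,\ldots,m-1$. Then $v_i^{(k)} = 0$ on $\partial\Omega$,
$$ -\Delta v_i^{(k)} = v_i^{(k+1)} \text{ for } k<m-1, \qquad -\Delta v_i^{(m-1)} = \varepsilon_i u_i + u_i^p. $$
Iterating the Dirichlet maximum principle gives $v_i^{(k)} \geq 0$, so the system is cooperative with positive components. The de Figueiredo–Lions–Nussbaum version of the moving-planes method applies to each component in turn: smoothness and strict convexity of $\partial\Omega$ guarantee that at every boundary point one can reflect across a supporting hyperplane by a width $\delta_0>0$ that depends only on $\Omega$, and the resulting monotonicity compares $u_i(x)$ for $x$ near $\partial\Omega$ with the values of $u_i$ on a fixed interior compact set via a Harnack chain. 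This produces $\delta_0>0$ and $C=C(\delta_0)$, independent of $\varepsilon \in (\lambda_1/2,\lambda_1)$, with
$$ u_i(x) \leq C \quad \text{for all } x\in\Omega \text{ with } \mathrm{dist}(x,\partial\Omega) \leq \delta_0. $$

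\textbf{Step 2: Interior bound via Proposition \ref{pro:IMRN}.} For any $x_0\in\Omega$ with $\mathrm{dist}(x_0,\partial\Omega) \geq \delta_0$, set $r := \min\{\delta_0/3,1\}$ and rescale by $\tilde u_i(y) := r^{(n-2m)/2} u_i(x_0 + ry)$ on $B_3$. A direct computation yields
$$ (-\Delta)^m \tilde u_i - a(y)\, \tilde u_i = \tilde u_i^{(n+2m)/(n-2m)} \quad \text{in } B_3, $$
with $a(y) \equiv r^{2m}\varepsilon_i$, a positive constant bounded above and below uniformly in $i$. The required nonnegativity $(-\Delta)^k \tilde u_i \geq 0$ for $1\leq k\leq m-1$ is exactly the superharmonicity $v_i^{(k)}\geq 0$ established in Step 1. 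Since $n\geq 4m$, case (i) of Proposition \ref{pro:IMRN} applies and gives $\|\tilde u_i\|_{C^{2m}(B_1)} \leq C$ with $C$ independent of $i$ and $x_0$. Undoing the rescaling produces
$$ u_i(x_0) \leq C\, r^{-(n-2m)/2} \leq C(\delta_0,n,m,\lambda_1). $$

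\textbf{Step 3: Conclusion.} Combining Steps 1 and 2 gives a uniform $L^\infty(\Omega)$ bound on $\{u_i\}$, contradicting $m_i\to\infty$ and proving the proposition. I expect the main obstacle to be Step 1: one must verify that the system formulation is genuinely cooperative under the Navier boundary condition, implement the reflection so that the reflected function satisfies the correct conditions on both the moving hyperplane and the original boundary, and extract a reflection width $\delta_0$ that is uniform in $\varepsilon$. Step 2, by contrast, is essentially a direct rescaling into the regime where Proposition \ref{pro:IMRN} applies.
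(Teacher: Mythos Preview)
Your proposal is correct and follows essentially the same two-step strategy as the paper: convert \eqref{eq:higherorder} to a cooperative second-order system and apply the moving-planes method on the strictly convex domain to control $u_\varepsilon$ in a boundary strip of $\varepsilon$-independent width, then invoke Proposition~\ref{pro:IMRN} (case (i), with $a\equiv\varepsilon_i$ bounded below for $\varepsilon_i\in(\lambda_1/2,\lambda_1)$) to bound $u_\varepsilon$ on the interior complement. The only cosmetic differences are that you rescale explicitly to $B_3$ whereas the paper remarks the balls in Proposition~\ref{pro:IMRN} can be replaced by open sets, and your mention of a ``Harnack chain'' is unnecessary since the monotonicity from moving planes already compares boundary-strip values directly with values on $\{\mathrm{dist}(\cdot,\partial\Omega)=\delta_0\}$.
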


\begin{proof}
Firstly, we prove that there exist some positive constants $\bar{\delta}$ and $C=C(\bar{\delta})$ such that, for all $\varepsilon \in(0, \lambda_{1})$,
\be\label{eq:uvare<CinOmega1}
u_{\varepsilon}(x) \leq C, \quad \forall\, x \in\Omega_1:=\{y \in \Omega \mid \operatorname{dist}(y, \partial \Omega) \leq \bar{\delta}\}.
\ee
Let $u_{1}(x)=u_{\varepsilon}(x)$, $u_{2}(x)=-\Delta u_{\varepsilon}(x)$, $\cdots$, $u_{m}(x)=(-\Delta)^{m-1} u_{\varepsilon}(x)$. Then $(u_{1},\cdots, u_{m})$ is a solution of the following elliptic system
$$
\begin{cases}
-\Delta u_{i}=u_{i+1},\quad i=1,\cdots,m-1 & \text { in }\, \Omega,\\
-\Delta u_{m}=\varepsilon u_{1}+u_{1}^p & \text { in }\, \Omega,\\
u_i>0,\quad i=1,\cdots,m & \text { in }\, \Omega, \\
u_{1}=\cdots=u_{m}=0 & \text { on }\, \partial\Omega,
\end{cases}
$$
where we have used the maximum principle. Since the domain $\Omega$ is strictly convex, by applying the method of moving planes for systems of elliptic equations (see, for example, \cite[Lemma 4.1]{TSymmetry1981}), we get that, for any $x_0 \in \partial \Omega$, there exists $\delta_0=\delta_0(x_0,\Omega)>0$ such that $u_{\varepsilon}(x)$ is monotone increasing along the internal normal direction in the region
$$
\overline{\Sigma_{\delta_0}}:=\{x \in \overline{\Omega} \mid 0 \leq(x-x_0) \cdot \nu_0 \leq \delta_0\},
$$
where $\nu_0$ denotes the unit internal normal vector of $\partial \Omega$ at $x_0$. By the smoothness of the boundary $\partial \Omega$, there exists $r_0=r_0(x_0,\Omega)>0$ small enough, such that for any $x \in B_{r_0}(x_0) \cap \partial \Omega$, $u_{\varepsilon}(x)$ is monotone increasing along the internal normal direction at $x$ in the region
$$
\overline{\Sigma_x}:=\{y \in \overline{\Omega} \mid 0 \leq(y-x) \cdot \nu_x \leq \frac{3}{4} \delta_0\},
$$
where $\nu_x$ denotes the unit internal normal vector of $\partial \Omega$ at $x$. Since $x_0 \in \partial \Omega$ is arbitrary and $\partial \Omega$ is compact, we can cover $\partial \Omega$ by finitely many balls $B_{r_k}(x_k)$, $k=0,1,\cdots,N$, where $r_k>0$, $x_k\in \partial \Omega$, and $N$ depends only on $\Omega$. For each $x_k \in \partial \Omega$, choose $\delta_k$ in a similar way as $\delta_0$. Let
$$
\bar{\delta}:=\frac{3}{4} \min \{\delta_0, \delta_1, \cdots, \delta_N\},
$$
then for any $x \in \partial \Omega$, $u(x+t \nu_x)$ is monotone increasing with respect to $t \in[0, \bar{\delta}]$, the proof of \eqref{eq:uvare<CinOmega1} is finished.

Denote
$$
\Omega_2:=\{y \in \Omega \mid \operatorname{dist}(y, \partial \Omega) \geq \bar{\delta}\}.
$$
Then by Proposition \ref{pro:IMRN}, we have
$$
\|u_{\varepsilon}\|_{L^{\infty}(\Omega_2)}\leq C.
$$
Indeed, in the proof of Proposition \ref{pro:IMRN}, we use $B_1$ and $B_3$ for conveniences. One can replace them by open sets. Note that the constant $C$ in Proposition \ref{pro:IMRN} depends on $\inf _{B_2} a$, so we cannot guarantee that the conclusion of this proposition holds for any $\varepsilon\in(0, \lambda_1)$.

Therefore, the proof of this proposition is completed.
\end{proof}

Notice that \eqref{eq:higherorder} has no solution for $\varepsilon\geq \lambda_1$ (see \cite[page 243]{GGSPolyharmonic2010}), therefore, the only solution to
$$
\begin{cases}
(-\Delta)^m u=\lambda_1 u+u^{p} & \text { in }\, \Omega, \\
u\geq0 & \text { in }\, \Omega, \\
u=-\Delta u=\cdots=(-\Delta)^{m-1} u=0 & \text { on }\, \partial \Omega,
\end{cases}
$$
is the trivial one. Using this fact and Proposition \ref{pro:uinfleqC}, we have the following corollary.

\begin{corollary}\label{cor:uinfto0}
Let $u_{\varepsilon}$ be a solution of \eqref{eq:higherorder}. Then $\|u_{\varepsilon}\|_{L^{\infty}(\Omega)} \rightarrow 0$ as $\varepsilon \rightarrow \lambda_{1}$.
\end{corollary}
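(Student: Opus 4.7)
The plan is to argue by contradiction using the uniform bound from Proposition \ref{pro:uinfleqC} together with elliptic regularity and the nonexistence of nontrivial nonnegative solutions at $\varepsilon=\lambda_1$. Suppose there exist a sequence $\varepsilon_k\to\lambda_1^-$ and solutions $u_{\varepsilon_k}$ of \eqref{eq:higherorder} with $\|u_{\varepsilon_k}\|_{L^\infty(\Omega)}\geq \delta_0>0$. By Proposition \ref{pro:uinfleqC}, for $k$ large we already have $\|u_{\varepsilon_k}\|_{L^\infty(\Omega)}\leq C$, hence $\|\varepsilon_k u_{\varepsilon_k}+u_{\varepsilon_k}^p\|_{L^\infty(\Omega)}\leq C'$.

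Next I would upgrade the $L^\infty$ bound to a $C^{2m,\alpha}$ bound up to the boundary. Writing \eqref{eq:higherorder} as the cascade system $-\Delta u_j=u_{j+1}$ ($j=1,\dots,m-1$), $-\Delta u_m=\varepsilon u_1+u_1^p$ with zero Dirichlet data for each $u_j$, classical $L^q$ and Schauder estimates applied iteratively give $\|u_{\varepsilon_k}\|_{C^{2m,\alpha}(\overline{\Omega})}\leq C''$ (with $\alpha\in(0,1)$). By Arzel\`a–Ascoli, along a subsequence $u_{\varepsilon_k}\to u_0$ in $C^{2m}(\overline{\Omega})$, where $u_0\geq 0$ and solves
\[
\begin{cases}
(-\Delta)^m u_0=\lambda_1 u_0+u_0^{p} & \text{in }\,\Omega,\\
u_0\geq 0 & \text{in }\,\Omega,\\
u_0=-\Delta u_0=\cdots=(-\Delta)^{m-1} u_0=0 & \text{on }\,\partial\Omega.
\end{cases}
\]
As already noted in the paragraph preceding the corollary, the only such $u_0$ is $u_0\equiv 0$. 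However, by the $C^0$ convergence, $\|u_0\|_{L^\infty(\Omega)}=\lim_{k\to\infty}\|u_{\varepsilon_k}\|_{L^\infty(\Omega)}\geq \delta_0>0$, which is a contradiction.

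There is essentially no serious obstacle: Proposition \ref{pro:uinfleqC} does the real work, and the remaining steps are routine bootstrapping and a compactness argument. The only point that deserves care is producing uniform $C^{2m,\alpha}$ estimates all the way up to $\partial\Omega$ for the polyharmonic Navier problem; this is handled by treating the problem as the system above and applying standard Dirichlet Schauder estimates at each stage, so no delicate boundary analysis beyond what is contained in \cite{GGSPolyharmonic2010} is required. Strict positivity of $u_0$ is not needed — we only use that $u_0\geq 0$, and nonexistence of any nontrivial nonnegative solution at $\varepsilon=\lambda_1$ closes the contradiction.
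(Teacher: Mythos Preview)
Your proof is correct and follows essentially the same approach as the paper: the paper does not spell out the details, but simply deduces the corollary from Proposition \ref{pro:uinfleqC} together with the nonexistence of nontrivial nonnegative solutions at $\varepsilon=\lambda_1$, which is precisely the compactness-plus-passage-to-the-limit argument you have written out.
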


Now we can give the proof of Theorem \ref{thm:1}.

\begin{proof}[Proof of Theorem \ref{thm:1}]
We prove Theorem \ref{thm:1} by a contradiction argument and suppose on the contrary that there exist sequences $\varepsilon_{i} \rightarrow \lambda_{1}$ as $i \rightarrow \infty$, $\{u_{\varepsilon_{i}}\}$ and $\{v_{\varepsilon_{i}}\}$ such that $u_{\varepsilon_{i}}$ and $v_{\varepsilon_{i}}$ solve \eqref{eq:higherorder} for $\varepsilon=\varepsilon_{i}$ and $\|u_{\varepsilon_{i}}-v_{\varepsilon_{i}}\|_{L^{\infty}(\Omega)} \neq 0$. Denote
$$
w_{\varepsilon_{i}}=\frac{u_{\varepsilon_{i}}-v_{\varepsilon_{i}}}{\|u_{\varepsilon_{i}}-v_{\varepsilon_{i}}\|_{L^{\infty}(\Omega)}},
$$
then $w_{\varepsilon_{i}}$ satisfies
$$
\begin{cases}
(-\Delta)^m w_{\varepsilon_{i}}=\varepsilon_{i} w_{\varepsilon_{i}}+c_{\varepsilon_{i}} w_{\varepsilon_{i}} & \text { in }\, \Omega, \\
w_{\varepsilon_{i}}=-\Delta w_{\varepsilon_{i}}=\cdots=(-\Delta)^{m-1} w_{\varepsilon_{i}}=0 & \text { on }\, \partial \Omega,
\end{cases}
$$
where $c_{\varepsilon_{i}}=p \xi_{\varepsilon_{i}}^{p-1}$ and $\xi_{\varepsilon_{i}}$ is a positive function between $u_{\varepsilon_{i}}$ and $v_{\varepsilon_{i}}$. By Corollary \ref{cor:uinfto0}, we have $\|c_{\varepsilon_{i}}\|_{L^{\infty}(\Omega)} \rightarrow 0$ as $i \rightarrow \infty$. Since $\|w_{\varepsilon_{i}}\|_{L^{\infty}(\Omega)}=1$, by standard elliptic estimates we conclude that up to subsequences, $w_{\varepsilon_{i}} \rightarrow w_{0}$ in $C^{2m}(\overline{\Omega})$, and $w_{0}$ satisfies
$$
\begin{cases}
(-\Delta)^m w_{0}=\lambda_{1} w_{0} & \text { in }\, \Omega, \\
w_{0}=-\Delta w_{0}=\cdots=(-\Delta)^{m-1} w_{0}=0 & \text { on }\, \partial \Omega.
\end{cases}
$$
Therefore, $w_{0}$ is a first Navier eigenfunction of $(-\Delta)^m$, and satisfies
\begin{equation}\label{eq:proofthm1-1}
w_{0}>0\quad \text { in }\, \Omega, \quad \frac{\partial w_{0}}{\partial \nu}<0\quad \text { on }\, \partial \Omega,
\end{equation}
where $\nu$ is the unit outward normal vector of $\partial \Omega$.

On the other hand, since $u_{\varepsilon_{i}}$ and $v_{\varepsilon_{i}}$ satisfy \eqref{eq:higherorder} with $\varepsilon=\varepsilon_{i}$, we have
$$
\int_{\Omega}(-\Delta)^m u_{\varepsilon_{i}} v_{\varepsilon_{i}}\,\ud x=\int_{\Omega} \varepsilon_{i} u_{\varepsilon_{i}} v_{\varepsilon_{i}}\,\ud x+\int_{\Omega} u_{\varepsilon_{i}}^p v_{\varepsilon_{i}}\,\ud x
$$
and
$$
\int_{\Omega}(-\Delta)^m v_{\varepsilon_{i}} u_{\varepsilon_{i}}\,\ud x=\int_{\Omega} \varepsilon_{i} v_{\varepsilon_{i}} u_{\varepsilon_{i}}\,\ud x+\int_{\Omega} v_{\varepsilon_{i}}^p u_{\varepsilon_{i}}\,\ud x.
$$
Using the Navier boundary conditions, we have
$$
\int_{\Omega}(-\Delta)^m u_{\varepsilon_{i}} v_{\varepsilon_{i}}\,\ud x=\int_{\Omega}(-\Delta)^m v_{\varepsilon_{i}} u_{\varepsilon_{i}}\,\ud x.
$$
Hence,
$$
\int_{\Omega} u_{\varepsilon_{i}} v_{\varepsilon_{i}}(u_{\varepsilon_{i}}^{p-1}-v_{\varepsilon_{i}}^{p-1})\,\ud x=0.
$$
Note that $u_{\varepsilon_i}>0$ and $v_{\varepsilon_i}>0$ in $\Omega$. Therefore, for all $i$, there exists $x_{i} \in \Omega$ such that $w_{\varepsilon_{i}}(x_i)=0$. Using \eqref{eq:proofthm1-1} and the fact that $w_{\varepsilon_{i}} \rightarrow w_{0}$ in $C^{2m}(\overline{\Omega})$, we know that $x_i \rightarrow x_{0} \in \partial \Omega$. Let $\bar{x}_i$ be one of the closest points to $x_i$ on $\partial \Omega$. Then the direction of $\overline{x_i \bar{x}_i}$ is close to the exterior unit normal $\nu$ at $x_{0}$ as $i \rightarrow \infty$. By the boundary conditions of $u_{\varepsilon_{i}}$ and $v_{\varepsilon_{i}}$, we have $w_{\varepsilon_{i}}(\bar{x}_i) = 0$, it then follows from the fact $w_{\varepsilon_{i}} \rightarrow w_{0}$ in $C^{2m}(\overline{\Omega})$ that
$$
\frac{\partial w_{0}}{\partial \nu}\Big|_{x=x_{0}}=0,
$$
which contradicts \eqref{eq:proofthm1-1}. Therefore, we complete the proof.
\end{proof}

\section{Asymptotic behavior of solutions to \eqref{eq:Navier}}

In this section, we study the asymptotic behavior of solutions to \eqref{eq:Navier}. The main result of this section is as follows.

\begin{proposition}\label{pro:Han}
Let $\Omega$ be a strictly convex smooth bounded domain in $\mathbb{R}^{n}$ with $n \geq 12$, satisfying $(A)$. Let $u_{\varepsilon}$ be a solution of \eqref{eq:Navier}. Then
\begin{enumerate}[(i)]
  \item
we have
\be\label{eq:Han-1}
u_{\varepsilon}(x) \leq C \frac{\|u_{\varepsilon}\|_{L^{\infty}(\Omega)}}{(1+\|u_{\varepsilon}\|_{L^{\infty}(\Omega)}^{4 /(n-4)}|x|^{2})^{(n-4) / 2}},
\ee
where $C$ is a positive constant independent of $\varepsilon$;
  \item
for any neighborhood $\omega$ of $\partial \Omega$ not containing $x=0$, we have
\be\label{eq:Han-2}
\|u_{\varepsilon}\|_{L^{\infty}(\Omega)} u_{\varepsilon}(x) \rightarrow \frac{2c_0^{-n/2}\omega_{n}}{n(n+2)} G(x, 0) \quad \text { as }\, \varepsilon \rightarrow 0
\ee
and
\be\label{eq:Han-2-1}
\Delta(\|u_{\varepsilon}\|_{L^{\infty}(\Omega)} u_{\varepsilon}(x)) \rightarrow \frac{2c_0^{-n/2}\omega_{n}}{n(n+2)} \Delta_x G(x, 0) \quad \text { as }\, \varepsilon \rightarrow 0
\ee
uniformly in $\omega$, where $c_0$ is defined in \eqref{eq:constantc0} and $G$ is the Green function of $(-\Delta)^2$ under the Navier boundary condition;
  \item
we have
\be\label{eq:Han-3}
\lim _{\varepsilon \rightarrow 0} \varepsilon\|u_{\varepsilon}\|_{L^{\infty}(\Omega)}^{\frac{2(n-8)}{n-4}}=-C_n R(0),
\ee
where $C_n$ is defined in \eqref{eq:constantCn} and $R(x)$ is the Robin function.
\end{enumerate}
\end{proposition}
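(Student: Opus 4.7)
The plan is to combine the blow up analysis of Section 2.1 with a Green's function representation and the Pohozaev identity, viewing the whole proposition as the quantitative content of a single-bubble concentration at the origin. First I would use $(A)$ to force every maximum point of $u_\varepsilon$ to be fixed by each reflection $x_i\mapsto -x_i$ and hence equal $0$; coupled with the assumption \eqref{eq:leastenergy} (proved for symmetric domains prior to the proposition), which rules out energy splitting, this yields $m_\varepsilon:=\|u_\varepsilon\|_{L^\infty(\Omega)}=u_\varepsilon(0)\to\infty$ with $0$ the unique blow up point of $\{u_\varepsilon\}$. Since $n\ge 12\ge 10$, Lemma \ref{lem:IMRN-isomustisosimle} upgrades $0$ to an isolated simple blow up point, so all the results of Section 2.1 become available.

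Part (i) then follows by patching two pieces of information. On the inner scale $|x|\le R m_\varepsilon^{-2/(n-4)}$, Lemma \ref{lem:IMRN-converge} provides the bubble profile and \eqref{eq:Han-1} is automatic since $(1+c_0|z|^2)^{(4-n)/2}$ decays like $|z|^{4-n}$; on the annulus $R m_\varepsilon^{-2/(n-4)}\le|x|\le 1$, Lemma \ref{lem:IMRN-sharp} gives $u_\varepsilon(x)\le Cm_\varepsilon^{-1}|x|^{4-n}$, which is of the same order as the right hand side of \eqref{eq:Han-1}; for $|x|\ge 1$ the a priori $L^\infty$ control coming from the moving-planes argument of Section 3 combined with Lemma \ref{lem:localbound} yields $u_\varepsilon(x)=O(m_\varepsilon^{-1})$, again compatible with \eqref{eq:Han-1}.

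For (ii) I would start from the Green's function representation
\[u_\varepsilon(x)=\int_\Omega G(x,y)\bigl(\varepsilon u_\varepsilon(y)+u_\varepsilon(y)^p\bigr)\,\ud y,\]
multiply by $m_\varepsilon$, and analyse the two resulting integrals on a fixed neighbourhood $\omega$ of $\partial\Omega$ away from $0$. There $G(x,\cdot)$ is smooth at $0$, so \eqref{eq:Han-1} and dominated convergence give
\[m_\varepsilon\int_\Omega G(x,y)u_\varepsilon(y)^p\,\ud y\;\longrightarrow\;G(x,0)\int_{\R^n}(1+c_0|z|^2)^{-(n+4)/2}\,\ud z=\frac{2c_0^{-n/2}\omega_n}{n(n+2)}G(x,0),\]
the last equality being a standard Beta-function computation. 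The $\varepsilon$-term is negligible because $\int_\Omega u_\varepsilon=o(m_\varepsilon^{-1})$ on the bubble scale and $\varepsilon\to 0$. The Laplacian convergence \eqref{eq:Han-2-1} follows by writing \eqref{eq:Navier} as the cascade $-\Delta u_{\varepsilon,1}=u_{\varepsilon,2}$, $-\Delta u_{\varepsilon,2}=\varepsilon u_{\varepsilon,1}+u_{\varepsilon,1}^p$ and applying Lemma \ref{lem:esti-unablau} to each equation, upgrading $L^1/L^\infty$ control of the inhomogeneities into $C^{1,\alpha}$ control up to $\partial\Omega$. Part (iii) is then extracted from the Pohozaev identity \eqref{eq:Pohozaev}: by part (ii) and Lemma \ref{lem:PohozaevGreen} the boundary integral equals $-\tfrac{n-4}{2}L^{2}R(0)m_\varepsilon^{-2}+o(m_\varepsilon^{-2})$ with $L=\frac{2c_0^{-n/2}\omega_n}{n(n+2)}$; on the other side, the change of variables $y=m_\varepsilon^{-2/(n-4)}z$ together with \eqref{eq:Han-1} yields $\int_\Omega u_\varepsilon^{2}\,\ud x=c_0^{-n/2}I_n m_\varepsilon^{-8/(n-4)}(1+o(1))$ with $I_n=\int_{\R^n}(1+|z|^2)^{-(n-4)}\,\ud z$ finite for $n>8$ (the outer contribution is $O(m_\varepsilon^{-2})=o(m_\varepsilon^{-8/(n-4)})$ because $n>8$). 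Equating the two sides reads off \eqref{eq:Han-3}.

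The main obstacle I expect is the upgrade in part (ii) from pointwise to $\Delta$- and boundary $C^{1}$-convergence of $m_\varepsilon u_\varepsilon\to L\,G(\cdot,0)$ on $\omega$, which is what allows the normal derivatives $\partial u_\varepsilon/\partial\nu$ and $\partial(-\Delta u_\varepsilon)/\partial\nu$ inside the Pohozaev boundary integral to be replaced by the Green's function derivatives with a genuine $o(m_\varepsilon^{-2})$ error. The two-step application of Lemma \ref{lem:esti-unablau} to the cascade $(u_{\varepsilon,1},u_{\varepsilon,2})$ is the natural mechanism for this. A secondary but unavoidable task is tracking the scaling constant $c_0$ through the two Beta-function integrals so that the final constants agree with \eqref{eq:constantc0} and \eqref{eq:constantCn}.
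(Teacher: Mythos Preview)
Your outline for parts (ii) and (iii) matches the paper's proof closely: Green representation plus dominated convergence for \eqref{eq:Han-2}, the two-step use of Lemma \ref{lem:esti-unablau} on the system \eqref{eq:equiveq} for the $C^{1,\alpha}$ upgrade to the boundary, and the Pohozaev identity combined with Lemma \ref{lem:PohozaevGreen} for \eqref{eq:Han-3}.

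For part (i), however, you take a genuinely shorter route than the paper. The paper proves \eqref{eq:Han-1} via a Kelvin transform of the rescaled function $v_\varepsilon$ (Section~4.2): it introduces $w_\varepsilon(z)=|z|^{4-n}v_\varepsilon(z/|z|^2)$ and shows $\|w_\varepsilon\|_{L^\infty}\le C$ by a fairly elaborate maximum-principle and bootstrap argument (Proposition \ref{pro:wvareLinftyBR<C}), which in turn requires the Pohozaev estimate $\varepsilon\le C\mu_\varepsilon^{n-8}$ of Lemma \ref{lem:vare-muvare}; the dimension restriction $n\ge 12$ enters precisely there, in the bound \eqref{eq:dimention}. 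Your approach instead reads \eqref{eq:Han-1} directly off the isolated-simple blow up estimate of Lemma \ref{lem:IMRN-sharp} together with the outer bound of Proposition \ref{pro:ui<outBdelta}, both of which the paper has already established in Section~4.1 while proving \eqref{eq:leastenergy}. This is legitimate: once one has $u_\varepsilon(x)\le Cm_\varepsilon^{-1}|x|^{4-n}$ throughout $\Omega$, \eqref{eq:Han-1} follows by comparing the two sides separately on $\{|x|\lesssim m_\varepsilon^{-2/(n-4)}\}$ and its complement. What the paper's Kelvin argument buys is a self-contained treatment in the spirit of Han \cite{HAsymptotic1991} and Chou--Geng \cite{CGAsymptotics2000}, at the cost of the extra work in Section~4.2.

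Two points in your sketch need tightening. First, Lemma \ref{lem:IMRN-isomustisosimle} requires $0$ to be an \emph{isolated} blow up point in the sense of Definition \ref{def:isolated}, i.e.\ the quantitative bound $u_\varepsilon(x)\le C|x|^{-(n-4)/2}$, not merely that $0$ is the unique blow up point; this is Proposition \ref{pro:ui-isolated}, proved by contradiction using that $\nabla u_\varepsilon$ vanishes only at $0$. Second, your outer-region argument for $|x|\ge 1$ should invoke that same unique-critical-point mechanism (as in Proposition \ref{pro:ui<outBdelta}) rather than Section~3 or Lemma \ref{lem:localbound}: the moving-planes boundedness of Section~3 is tailored to $\varepsilon$ near $\lambda_1$ and its constant degenerates as $\varepsilon\to 0$, while Lemma \ref{lem:localbound} alone does not produce the needed factor $m_\varepsilon^{-1}$.
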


\begin{remark}
For later applications, we let $\Omega$ satisfy some convexity and symmetry assumptions in Proposition \ref{pro:Han}. In fact, the corresponding conclusions are also true for the general region.
\end{remark}

\subsection{Proof of \eqref{eq:leastenergy}}

As mentioned in the introduction, we first use the results of blow up analysis obtained in \cite{NTZCompactness2022} to prove that \eqref{eq:leastenergy} holds when $\Omega$ satisfies ($A$).

As in the proof of Proposition \ref{pro:uinfleqC}, \eqref{eq:Navier} is equivalent to the elliptic system
\be\label{eq:equiveq}
\begin{cases}
-\Delta u_{1}=u_{2} & \text { in }\, \Omega,\\
-\Delta u_{2}=\varepsilon u_{1}+u_{1}^p & \text { in }\, \Omega,\\
u_1>0,\, u_2>0 & \text { in }\, \Omega, \\
u_{1}=u_{2}=0 & \text { on }\, \partial\Omega.
\end{cases}
\ee
It then follows from assumption $(A)$ and \cite[Lemma 4.3]{TSymmetry1981} that $u_{\varepsilon}$ is symmetric with respect to the hyperplanes $\{x_{i}=0\}$, i.e.,
$$
u_{\varepsilon}(x_{1}, \cdots,-x_{i}, \cdots, x_{n})=u_{\varepsilon}(x_{1}, \cdots, x_{i}, \cdots, x_{n}),\quad i=1, \cdots, n,
$$
and
\be\label{eq:maxu=u0}
\|u_{\varepsilon}\|_{L^{\infty}(\Omega)}=u_{\varepsilon}(0).
\ee

We claim that
\be\label{eq:utoinfty}
\|u_{\varepsilon}\|_{L^{\infty}(\Omega)} \rightarrow+\infty\quad \text{ as }\, \varepsilon \rightarrow 0.
\ee
Indeed, suppose by contradiction that the result is false. Then, there exist a sequence $\varepsilon_{i} \rightarrow 0$ as $i \rightarrow \infty$ such that $\|u_{\varepsilon_{i}}\|_{L^{\infty}(\Omega)} \leq C$ for some constant $C>0$. Therefore, there exists a subsequence of $\varepsilon_{i}$, still denoted by $\varepsilon_{i}$, and a constant $C_0\geq0$ such that, $\|u_{\varepsilon_i}\|_{L^{\infty}(\Omega)}\rightarrow C_{0}$ as $i \rightarrow \infty$. By \eqref{eq:Navier} and integration by parts we find that
$$
\int_{\Omega}|\Delta u_{\varepsilon_{i}}|^{2} \,\ud x=\varepsilon_i\int_{\Omega}|u_{\varepsilon_{i}}|^{2}\,\ud x+\int_{\Omega}|u_{\varepsilon_{i}}|^{p+1} \,\ud x.
$$
Then by Sobolev inequality \eqref{eq:Sobolevine} and H\"older inequality, we have
\be\label{eq:SobolevHolder}
S_{n}  \leq \frac{\int_{\Omega}|\Delta u_{\varepsilon_{i}}|^{2} \,\ud x}{(\int_{\Omega}|u_{\varepsilon_{i}}|^{p+1} \,\ud x)^{2 / (p+1)}} \leq\varepsilon_{i}|\Omega|^{\frac{p-1}{p+1}}+\Big(\int_{\Omega}|u_{\varepsilon_{i}}|^{p+1} \,\ud x\Big)^{\frac{p-1}{p+1}} \leq (\varepsilon_i+\|u_{\varepsilon_{i}}\|^{p-1}_{L^{\infty}(\Omega)}) |\Omega|^{\frac{p-1}{p+1}}.
\ee
This implies that $C_{0}>0$. On the other hand, by standard elliptic theory we conclude that $u_{\varepsilon_{i}}$, up to a subsequence, converges in ${C}^{4}(\overline{\Omega})$ to some function $u_0$ and $u_0$ satisfies
\be\label{eq:eqvare=0}
\begin{cases}
(-\Delta)^2 u=u^{p} & \text { in }\, \Omega, \\
u \geq 0 & \text { in }\, \Omega,\\
u=-\Delta u=0 & \text { on }\, \partial \Omega.
\end{cases}
\ee
Furthermore,
$$
u(0)=\lim_{i\to\infty}\|u_{\varepsilon_{i}}\|_{L^{\infty}(\Omega)}=C_0>0.
$$
Note that assumption $(A)$ means that $\Omega$ is star-shaped with respect to the origin. Then by Van der Vorst \cite{VBest1995}, \eqref{eq:eqvare=0} has only trivial solution. This is a contradiction and we finish the proof of the claim.

Let $\mu_{\varepsilon}>0$ such that
\be\label{eq:defmuvare}
\mu_{\varepsilon}^{-\frac{n-4}{2}}=\|u_{\varepsilon}\|_{L^{\infty}(\Omega)}.
\ee
It follows from \eqref{eq:utoinfty} that
$$
\mu_{\varepsilon} \to 0\quad \text{ as }\, \varepsilon \rightarrow 0.
$$
Denote
\be\label{eq:vvaredef}
v_{\varepsilon}(y):=\mu_{\varepsilon}^{\frac{n-4}{2}} u_{\varepsilon}(\mu_{\varepsilon} y).
\ee
Then $v_{\varepsilon}$ satisfies
\be\label{eq:vvareeq}
\begin{cases}
(-\Delta)^2 v_{\varepsilon} =\varepsilon \mu_{\varepsilon}^4 v_{\varepsilon}+v_{\varepsilon}^{p} & \text { in } \Omega_{\varepsilon}, \\
v_{\varepsilon}>0 & \text { in } \Omega_{\varepsilon}, \\
v_{\varepsilon} =-\Delta v_{\varepsilon}=0 & \text { on } \partial \Omega_{\varepsilon},
\end{cases}
\ee
where $\Omega_{\varepsilon}=\Omega / \mu_{\varepsilon}$. Note that $0 \leq v_{\varepsilon} \leq 1$ in $\Omega_{\varepsilon}$, $v_{\varepsilon}(0)=1$ and $\Omega_{\varepsilon}$ tends to $\mathbb{R}^{n}$ as $\varepsilon \rightarrow 0$. By standard elliptic theory we know that there exists a subsequence of $\{v_{\varepsilon}\}$ (still denoted by $v_{\varepsilon}$) converging to some $V$ uniformly on every compact subset, and $V$ satisfies
\be\label{eq:v}
\begin{cases}
(-\Delta)^2 V =V^p \quad \text { in }\, \mathbb{R}^{n}, \\
V(0) =1, \, V(\infty)=0, \\
0<V \leq 1.
\end{cases}
\ee
It was shown by \cite[Theorem 2.1]{EFJCritical1990} and \cite[Theorem 1.3]{LA1998} that \eqref{eq:v} admits a unique solution given by
\be\label{eq:V(x)def}
V(x)=\Big(\frac{1}{1+c_0|x|^2}\Big)^{\frac{n-4}{2}},
\ee
where
\be\label{eq:constantc0}
c_0=((n-4)(n-2) n(n+2))^{\frac{1}{2}}.
\ee
Moreover, if we finish the proof of \eqref{eq:leastenergy}, we have that
\be\label{eq:globalcompact}
v_{\varepsilon} \rightarrow V \quad \text{ in }\, H^2(\mathbb{R}^n)\quad \text{ as }\, \varepsilon \rightarrow 0.
\ee
This follows from \cite{GGSExistence2003}, see also the arguments in \cite{HAsymptotic1991} and \cite{CGAsymptotics2000}.

Recall the definition of blow up point, isolated blow up point, and isolated simple blow up point given in subsection 2.1. It follows from \eqref{eq:maxu=u0} and \eqref{eq:utoinfty} that $x=0$ is a blow up point. In the following, we will prove that $x=0$ is an isolated blow up point. Furthermore, Lemma \ref{lem:IMRN-isomustisosimle} tells us that $x=0$ is also an isolated simple blow up point.

For convenience, in the following we let $\varepsilon_{i} \rightarrow 0$ as $i \rightarrow \infty$, and let $u_i=u_{\varepsilon_i}$ be a solution of \eqref{eq:Navier} with $\varepsilon=\varepsilon_i$.

\begin{proposition}\label{pro:ui-isolated}
Let $\Omega$ satisfies $(A)$ and let $u_{i}$ be a solution of \eqref{eq:Navier}. Then there exists a positive constant $C$ independent of $i$, such that
$$
u_{i}(x) \leq C|x|^{-(n-4)/2}\quad \text{ in }\, \Omega.
$$
\end{proposition}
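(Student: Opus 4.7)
The plan is to argue by contradiction, constructing a second bubble whose presence will overload the total $L^{p+1}$-mass of $u_i$. Suppose there exist points $y_i \in \Omega$ with $|y_i|^{(n-4)/2} u_i(y_i) \to \infty$; replacing $y_i$ by a maximizer of the weighted function $x \mapsto |x|^{(n-4)/2} u_i(x)$ over $\overline{\Omega}$ (which exists since the function vanishes at $x=0$ and on $\partial\Omega$), the weighted maximum $M_i := |y_i|^{(n-4)/2} u_i(y_i)$ tends to infinity and $y_i \neq 0$. First I would localize $y_i$ away from $\partial \Omega$: the moving-planes argument already used in Proposition \ref{pro:uinfleqC} gives a uniform $L^\infty$ bound on $u_i$ in a fixed tubular neighborhood of $\partial \Omega$, whereas $u_i(y_i) \geq M_i (\operatorname{diam}\Omega)^{-(n-4)/2} \to \infty$ forces $y_i$ into a compact subset of $\Omega$.

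Second, I would rescale around $y_i$ at the natural scale $\lambda_i := u_i(y_i)^{-2/(n-4)}$, setting $w_i(z) := \lambda_i^{(n-4)/2} u_i(y_i + \lambda_i z)$. The maximality of $y_i$, combined with $|y_i + \lambda_i z| \geq |y_i|/2$ for $|z| \leq R_i := |y_i|/(2\lambda_i)$, yields $w_i \leq 2^{(n-4)/2}$ on $B_{R_i}$, and a direct computation gives $R_i = M_i^{2/(n-4)}/2 \to \infty$. Since $w_i$ solves $(-\Delta)^2 w_i = \varepsilon_i \lambda_i^4 w_i + w_i^p$ with $\varepsilon_i \lambda_i^4 \to 0$ and a uniformly bounded right-hand side, standard elliptic estimates yield a subsequential $C^4_{\text{loc}}(\mathbb{R}^n)$ limit $W$ solving $(-\Delta)^2 W = W^p$, $W(0)=1$, $W \geq 0$; combined with $-\Delta W \geq 0$ (inherited from the system \eqref{eq:equiveq}), the classification of \cite{LA1998} forces $W$ to be a standard bubble, so in particular $\int_{\mathbb{R}^n} W^{p+1} = \int_{\mathbb{R}^n} V^{p+1}$ by translation and scale invariance of the critical integral.

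Finally, I would extract the contradiction by comparing $L^{p+1}$-masses on disjoint pieces. Because $u_i$ attains its global maximum at $0$, one has $\mu_i \leq \lambda_i$, hence $|y_i|/\mu_i \geq |y_i|/\lambda_i = 2R_i \to \infty$, so for every fixed $R>0$ the balls $B_{\mu_i R}(0)$ and $B_{\lambda_i R}(y_i)$ are disjoint for large $i$. Rescaling in each piece and using the two local convergences $v_\varepsilon \to V$ and $w_i \to W$,
\begin{equation*}
\int_\Omega u_i^{p+1}\,\ud x \;\geq\; \int_{B_{\mu_i R}(0)} u_i^{p+1}\,\ud x + \int_{B_{\lambda_i R}(y_i)} u_i^{p+1}\,\ud x \;\longrightarrow\; \int_{B_R} V^{p+1} + \int_{B_R} W^{p+1},
\end{equation*}
and letting $R \to \infty$ gives $\liminf_i \int_\Omega u_i^{p+1}\,\ud x \geq 2\int_{\mathbb{R}^n} V^{p+1}$. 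This contradicts the strong convergence \eqref{eq:globalcompact} (equivalent to \eqref{eq:leastenergy} via the results cited from \cite{GGSExistence2003,HAsymptotic1991,CGAsymptotics2000}), which forces $\int_\Omega u_i^{p+1}\,\ud x = \int_{\Omega_\varepsilon} v_\varepsilon^{p+1}\,\ud y \to \int_{\mathbb{R}^n} V^{p+1}$.

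The main obstacle will be the case $y_i \to 0$, where the putative second bubble collapses toward the first one in original coordinates; the key is that the inequality $|y_i|/\mu_i \geq M_i^{2/(n-4)} \to \infty$ uses only the weighted-maximum property and does \emph{not} require $|y_i|$ to stay bounded below, so the two profiles remain cleanly separated after rescaling and the disjointness step above is uniform in this regime.
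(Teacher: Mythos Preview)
Your argument is circular. The contradiction in your final step relies on \eqref{eq:globalcompact} (equivalently on the energy identity \eqref{eq:leastenergy}), but in the paper's logical structure Proposition~\ref{pro:ui-isolated} is precisely one of the ingredients used to \emph{prove} \eqref{eq:leastenergy}: the chain is Proposition~\ref{pro:ui-isolated} $\Rightarrow$ isolated simple blow up (Lemma~\ref{lem:IMRN-isomustisosimle}) $\Rightarrow$ the decay estimates \eqref{eq:|x|<delta}--\eqref{eq:|x|>delta} $\Rightarrow$ Proposition~\ref{pro:leastenergy}, and only \emph{then} does \eqref{eq:globalcompact} follow from the cited references. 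The statement ``$\int_\Omega u_i^{p+1}\to\int_{\mathbb R^n}V^{p+1}$'' is not available at this stage for arbitrary solutions $u_i$; it is exactly what the whole Subsection~4.1 is set up to establish. Without some a~priori upper bound on the total $L^{p+1}$ mass, the presence of two disjoint bubbles is not a contradiction.

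The paper avoids any energy input entirely. After the same rescaling around the weighted maximum $x_i$ and the same convergence $v_i\to v$ to a standard bubble centered at some bounded $x_0$, it observes that $v$ has a critical point at $x_0$, so $v_i$ has a nearby critical point $z_i$; unwinding the rescaling gives $\nabla u_i\big(u_i(x_i)^{-2/(n-4)}z_i+x_i\big)=0$. But assumption~$(A)$ together with the moving planes for the system \eqref{eq:equiveq} forces $\nabla u_i(y)=0$ only at $y=0$, whence $|z_i|=|x_i|\,u_i(x_i)^{2/(n-4)}\geq i^{2/(n-4)}\to\infty$, contradicting $z_i\in B_{\delta_0}(x_0)$. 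This ``critical point'' contradiction uses only the symmetry of $\Omega$ via moving planes, never an energy bound, and is what you should replace your mass-comparison step with.
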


\begin{proof}
We are going to prove this proposition by contradiction. Assume that for each $i$,
$$
\max _{x\in \overline{\Omega}}|x|^{\frac{n-4}{2}} u_{i}(x) \geq i.
$$
Pick a point ${x}_i$ in $\overline{\Omega}$ that satisfies
$$
|x_{i}|^{\frac{n-4}{2}} u_{i}(x_{i})=\max _{x\in \overline{\Omega}}|x|^{\frac{n-4}{2}} u_{i}(x).
$$
By \eqref{eq:uvare<CinOmega1} and \eqref{eq:utoinfty}, we have $x_i\in \{y \in \Omega \mid \operatorname{dist}(y, \partial \Omega) \geq \bar{\delta}\}$. Let $\delta\in (0,\bar{\delta})$ such that $B_{\delta}(x_{i}) \subset \Omega$ and let $R>1$ such that $|x_{i}| /4 R<\delta$ for any $i$.

Consider the following rescaled function
$$
v_{i}(x)={u_{i}(x_{i})^{-1}} u_{i}(u_{i}(x_{i})^{-\frac{2}{n-4}}{x}+x_{i}),\quad \forall\, |x|<R_i:=\frac{1}{4R}|x_i|u_i(x_i)^{\frac{2}{n-4}}.
$$
It follows from the definition of $x_i$ that
$$
R_i\geq \frac{1}{4R}i^{\frac{2}{n-4}}\to \infty \quad \text{ as }\, i \rightarrow \infty.
$$
Let $y=u_{i}(x_{i})^{-\frac{2}{n-4}}x+x_{i}$, $x\in B_{R_i}$, then $|y-x_{i}| \leq|x_{i}| /4 R<\delta$ and
$$
|y|\geq|x_i|-|y-x_i|\geq \Big(1-\frac{1}{4R}\Big)|x_i|\geq \frac{|x_i|}{2}.
$$
Hence, we have
$$
|x_{i}|^{\frac{n-4}{2}} u_{i}(x_{i})\geq |y|^{\frac{n-4}{2}} u_{i}(y)\geq \Big(\frac{|x_{i}|}{2}\Big)^{\frac{n-4}{2}} u_{i}(y),\quad \forall\, |x|<R_i.
$$
It follows that
$$
v_{i}(x)=\frac{u_{i}(y)}{u_{i}(x_{i})} \leq 2^{\frac{n-4}{2}} \leq C, \quad \forall\, |x|<R_i.
$$
By the arguments of \eqref{eq:vvaredef}-\eqref{eq:v}, there exists a subsequence of $v_i$ (still denoted by $v_i$) that converges to $v(x)$ uniformly on every compact subset, where $v(x)$ is a positive solution of
$$
\begin{cases}
(-\Delta)^2 v=v^{p} & \text { in } \, \mathbb{R}^{n}, \\
v(0)=1.
\end{cases}
$$
By \cite[Theorem 2.1]{EFJCritical1990} and \cite[Theorem 1.3]{LA1998}, we have
$$
v(x)=c_1\Big(\frac{\lambda}{1+\lambda^{2}|x-x_{0}|^{2}}\Big)^{\frac{n-4}{2}},
$$
where $c_1=c_0^{-(n-4)/4}$, $\lambda>0$, and $x_0\in \mathbb{R}^n$.

Since $1=v(0)=c_1(\frac{\lambda}{1+ \lambda^2|x_0|^2})^{\frac{n-4}{2}}$ and $v(x) \leq 2^{\frac{n-4}{2}}$ for all $x \in \mathbb{R}^n$, we see that
$$
1\leq c_1\lambda^{\frac{n-4}{2}}\leq 2^{\frac{n-4}{2}},
$$
and
$$
|x_{0}|^{2}=\frac{c_1^{\frac{2}{n-4}}\lambda-1}{\lambda^{2}}.
$$
Therefore,
$$
|x_{0}|^{2} \leq {c_1^{\frac{4}{n-4}}}/{4}.
$$
Note that $v_{i} \rightarrow v$ in $C^{1}(B_{\delta_0}(x_{0}))$ for some $\delta_0>0$. Hence, there exists a sequence $z_{i} \in B_{\delta_0}(x_{0})$, such that $\nabla v_{i}(z_{i})=0$. By the assumption $(A)$ of $\Omega$ and the moving plane method, we know that $\nabla u_{i}(y)=0$ if and only if $y=0$. This implies that
$$
u_{i}(x_{i})^{-\frac{2}{n-4}}z_{i}+x_{i}=0.
$$
Hence one obtains
$$
|z_{i}|=|x_{i}| u_{i}(x_{i})^{\frac{2}{n-4}}\geq i^{\frac{2}{n-4}} \rightarrow \infty \quad \text{ as }\, i \rightarrow \infty,
$$
which contradicts the fact that $z_{i} \in B_{\delta_0}(x_{0})$. The proof is complete.
\end{proof}

By Lemma \ref{lem:IMRN-isomustisosimle}, $x=0$ has to be isolated simple blow up point. Then the conclusion of Lemma \ref{lem:IMRN-sharp} holds, i.e.,
\be\label{eq:|x|<delta}
u_i(x) \leq C \|u_{i}\|_{L^{\infty}(\Omega)}^{-1}|x|^{4-n} \quad \text { in }\, \overline{B}_{\delta},
\ee
where $\delta>0$ is some constant. The following proposition gives an estimate of $u_i$ in $\Omega \backslash \overline{B}_{\delta}$.

\begin{proposition}\label{pro:ui<outBdelta}
Let $\Omega$ satisfies $(A)$ and let $u_{i}$ be a solution of \eqref{eq:Navier}. Then there exists a positive constant $C$ independent of $i$, such that
\be\label{eq:|x|>delta}
u_{i}(x) \leq C \|u_{i}\|_{L^{\infty}(\Omega)}^{-1}\delta^{4-n} \quad \text { in }\, \Omega \backslash \overline{B}_{\delta}.
\ee
\end{proposition}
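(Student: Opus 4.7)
The plan is to use the Green's function representation
$$u_i(x)=\int_\Omega G(x,y)\bigl[\varepsilon_i u_i(y)+u_i(y)^p\bigr]\,\ud y$$
and split the integral at $B_{\delta/2}$. The inner contribution will be controlled by the sharp estimate $u_i(y)\le CM_i^{-1}|y|^{4-n}$ from Lemma \ref{lem:IMRN-sharp} (with $M_i=\|u_i\|_{L^\infty(\Omega)}$), while the outer contribution is handled by a bootstrap that requires the preliminary fact $\sigma_i:=\sup_{\overline\Omega\setminus B_{\delta/2}}u_i\to 0$.

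To establish $\sigma_i\to 0$, note that Proposition \ref{pro:ui-isolated} gives uniform bounds on $u_i$ on compact subsets of $\overline\Omega\setminus\{0\}$; interior and boundary elliptic regularity applied to the system \eqref{eq:equiveq} upgrade these to uniform $C^{4,\alpha}$ bounds. Along any subsequence $u_i\to u^*$ in $C^4_{\mathrm{loc}}(\overline\Omega\setminus\{0\})$, where $u^*\ge 0$ solves $(-\Delta)^2u^*=(u^*)^p$ in $\Omega\setminus\{0\}$ with $u^*=-\Delta u^*=0$ on $\partial\Omega$. Lemma \ref{lem:IMRN-sharp} shows $u_i(y)\to 0$ pointwise on $\{0<|y|\le 1\}$, so $u^*\equiv 0$ there. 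Writing the limiting equation as $(-\Delta)^2u^*=V(x)u^*$ with $V=(u^*)^{p-1}\in L^\infty$, strong unique continuation for the polyharmonic operator then forces $u^*\equiv 0$ on the connected set $\Omega\setminus\{0\}$, and since every subsequential limit vanishes, $\sigma_i\to 0$.

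For $x\in\Omega\setminus\overline B_\delta$ and $y\in B_{\delta/2}$ we have $|x-y|\ge\delta/2$, hence $G(x,y)\le C\delta^{4-n}$. Splitting $B_{\delta/2}$ at the blow-up scale $\mu_i=M_i^{-2/(n-4)}$ and using $u_i\le M_i$ together with Lemma \ref{lem:IMRN-sharp} yields
$$\int_{B_{\delta/2}}u_i\,\ud y\le CM_i^{-1}\delta^4,\qquad\int_{B_{\delta/2}}u_i^p\,\ud y\le CM_i^{-1},$$
so the inner contribution is at most $CM_i^{-1}\delta^{4-n}(1+\varepsilon_i\delta^4)\le CM_i^{-1}\delta^{4-n}$. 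For the outer part, using $u_i\le\sigma_i$ on $\Omega\setminus B_{\delta/2}$ and $\int_\Omega G(x,y)\,\ud y\le C$ gives
$$\int_{\Omega\setminus B_{\delta/2}}G(x,y)\bigl[\varepsilon_i u_i+u_i^p\bigr]\,\ud y\le C(\varepsilon_i+\sigma_i^{p-1})\sigma_i.$$

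Combining both contributions and noting that the intermediate annulus $B_\delta\setminus B_{\delta/2}$ is already controlled by $CM_i^{-1}\delta^{4-n}$ via Lemma \ref{lem:IMRN-sharp}, one obtains $\sigma_i\le CM_i^{-1}\delta^{4-n}+C(\varepsilon_i+\sigma_i^{p-1})\sigma_i$. Since $\varepsilon_i\to 0$ and $\sigma_i^{p-1}\to 0$, the coefficient $1-C\varepsilon_i-C\sigma_i^{p-1}$ exceeds $1/2$ for $i$ large, yielding the desired bound $\sigma_i\le CM_i^{-1}\delta^{4-n}$. I expect the main obstacle to be the uniform-convergence step, which requires boundary $C^4$ regularity up to $\partial\Omega$ and a careful invocation of strong unique continuation for the semilinear biharmonic equation.
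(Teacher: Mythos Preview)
Your argument is correct, but the paper's proof is dramatically shorter and exploits the symmetry hypothesis $(A)$ directly. The paper argues by contradiction: if $\sup_{\overline\Omega\setminus B_\delta}u_i$ exceeds $i\,\|u_i\|_{L^\infty}^{-1}\delta^{4-n}$, then the maximum over $\overline\Omega\setminus B_\delta$ cannot be attained on $\partial\Omega$ (where $u_i=0$) nor on $\partial B_\delta$ (where \eqref{eq:|x|<delta} already gives the bound), so it is attained at an interior point $\bar x_i$ with $\nabla u_i(\bar x_i)=0$. But moving planes under $(A)$ force $0$ to be the \emph{only} critical point of $u_i$, contradicting $\bar x_i\in\Omega\setminus\overline B_\delta$. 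By contrast, your route---Green's representation, splitting at $B_{\delta/2}$, a bootstrap inequality $\sigma_i\le CM_i^{-1}\delta^{4-n}+C(\varepsilon_i+\sigma_i^{p-1})\sigma_i$, and a compactness/unique-continuation argument to secure $\sigma_i\to 0$---is considerably heavier but is also more robust: it does not use the ``unique critical point'' property and would carry over to settings with less symmetry once one has the isolated simple blow-up estimate. Two minor remarks: you only need \emph{weak} unique continuation (vanishing on an open set), which is classical for $(-\Delta)^2-V$ with bounded $V$; and an even quicker way to kill the limit $u^*$ is to observe that $u^*$ extends smoothly by zero across the origin and then invoke the nonexistence result of Van der Vorst on the star-shaped domain $\Omega$.
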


\begin{proof}
We prove \eqref{eq:|x|>delta} by contradiction. Suppose that there exists a point $x_{i} \in \Omega \backslash \overline{B}_{\delta}$, such that for any $i$,
$$
u_{i}(x_{i})>i\|u_{i}\|_{L^{\infty}(\Omega)}^{-1}\delta^{4-n}.
$$
Let $\bar{x}_i$ satisfies
$$
u_i(\bar{x}_i)=\max_{x\in \overline{\Omega}\backslash B_{\delta}} u_i(x).
$$
Obviously, $\bar{x}_i\notin \partial \Omega$. It follows from \eqref{eq:|x|<delta} that $u_i(x) \leq C \|u_{i}\|_{L^{\infty}(\Omega)}^{-1}\delta^{4-n}$ for $x\in \partial B_{\delta}$, therefore, $\bar{x}_{i} \notin \partial B_{\delta}$, because otherwise $u_{i}(x_{i})>u_{i}(\bar{x}_{i})$ for $i$ big enough. Hence, $\bar{x}_{i} \in \Omega \backslash \overline{B}_{\delta}$ and then $\nabla u_{i}(\bar{x}_{i})=0$. By the assumption $(A)$ of $\Omega$ and the moving plane method, we have $\bar{x}_{i}=0$. This is a contradiction.
\end{proof}

Now we can give the proof of \eqref{eq:leastenergy}.

\begin{proposition}\label{pro:leastenergy}
Let $\Omega$ satisfies $(A)$ and let $u_{i}$ be a solution of \eqref{eq:Navier}. Then,
$$
\lim _{i \rightarrow \infty} \frac{\int_{\Omega}|\Delta u_{i}|^2 \,\ud x}{(\int_{\Omega}|u_{i}|^{p+1}\,\ud x)^{2 / (p+1)}}=S_n,
$$
where $S_n$ is given by \eqref{eq:constantSn}.
\end{proposition}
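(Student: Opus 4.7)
The plan is to combine the integration-by-parts identity coming from the equation with the rescaling $v_i(y)=\mu_i^{(n-4)/2}u_i(\mu_i y)$ already set up in this section and the pointwise estimates (in particular \eqref{eq:|x|<delta} and Proposition~\ref{pro:ui<outBdelta}), so as to reduce the Sobolev quotient on $\Omega$ to the same quotient for $v_i$ on $\Omega_i$ and then pass to the limit by dominated convergence against the profile $V$. Concretely, multiplying $(-\Delta)^2 u_i = \varepsilon_i u_i + u_i^p$ by $u_i$ and integrating by parts using the Navier boundary condition gives
$$
\int_\Omega |\Delta u_i|^2\,\ud x = \varepsilon_i \int_\Omega u_i^2\,\ud x + \int_\Omega u_i^{p+1}\,\ud x.
$$
Dividing by $(\int_\Omega u_i^{p+1}\,\ud x)^{2/(p+1)}$ and using $1-2/(p+1)=4/n$ splits the Sobolev quotient into the main summand $(\int_\Omega u_i^{p+1}\,\ud x)^{4/n}$ plus an $\varepsilon_i$-remainder, and the goal is to show the main summand tends to $S_n$ and the remainder to $0$.

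For the main summand, changing variables $x=\mu_i y$ gives $\int_\Omega u_i^{p+1}\,\ud x=\int_{\Omega_i} v_i^{p+1}\,\ud y$, which I would split over $\Omega_i\cap B_{\delta/\mu_i}$ and its complement. On the outer piece, Proposition~\ref{pro:ui<outBdelta} rescales to $v_i\le C\mu_i^{n-4}\delta^{4-n}$, whose $L^{p+1}$-contribution is of order $\mu_i^{(n-4)(p+1)}|\Omega_i|=O(\mu_i^n)=o(1)$. On the inner piece, the rescaled form of \eqref{eq:|x|<delta} reads $v_i(y)\le C|y|^{4-n}$, which together with the trivial bound $v_i\le 1$ gives the uniform envelope $\min(1,C|y|^{4-n})$, whose $(p+1)$st power is integrable at infinity since $(n-4)(p+1)=2n$. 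Combined with the local $C^2$ convergence $v_i\to V$ from Lemma~\ref{lem:IMRN-converge}, dominated convergence yields $\int_{\Omega_i\cap B_{\delta/\mu_i}} v_i^{p+1}\,\ud y \to \int_{\mathbb{R}^n} V^{p+1}\,\ud y$. Since $V$ solves $(-\Delta)^2 V=V^p$ and is an extremal of \eqref{eq:Sobolevine}, the identities $\int|\Delta V|^2=\int V^{p+1}$ and $\int|\Delta V|^2=S_n(\int V^{p+1})^{2/(p+1)}$ force $\int V^{p+1}=S_n^{n/4}$, so $(\int u_i^{p+1})^{4/n}\to S_n$.

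For the $\varepsilon_i$-remainder, by the previous step the denominator $(\int u_i^{p+1})^{2/(p+1)}$ is bounded away from $0$, and since $\varepsilon_i\to 0$ it suffices to show $\int_\Omega u_i^2\,\ud x$ is bounded. I would split this integral into $B_{\mu_i}$, $\{\mu_i<|x|<\delta\}$, and $\Omega\setminus B_\delta$, and bound the three pieces by $\|u_i\|_{L^\infty}^2 |B_{\mu_i}|=O(\mu_i^4)$, $C\|u_i\|_{L^\infty}^{-2}\int_{\mu_i}^{\delta} r^{7-n}\,\ud r=O(\mu_i^4)$ (via \eqref{eq:|x|<delta}), and $O(\|u_i\|_{L^\infty}^{-2})$ (via Proposition~\ref{pro:ui<outBdelta}) respectively; the middle estimate is precisely where the hypothesis $n\ge 12$ (in particular $n>8$) is used to make the radial integral converge at the origin after cancellation. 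Hence $\int_\Omega u_i^2=o(1)$, and the proof is complete. The main technical obstacle is the dominated-convergence step for $\int_{\Omega_i} v_i^{p+1}$: Lemma~\ref{lem:IMRN-converge} yields only local $C^2$ control of $v_i$, and it is the global envelope $\min(1,C|y|^{4-n})$ produced by the sharp isolated-simple estimate \eqref{eq:|x|<delta} together with the far-field bound of Proposition~\ref{pro:ui<outBdelta} that upgrades pointwise convergence to $L^{p+1}$ convergence.
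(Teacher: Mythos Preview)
Your proof is correct and follows essentially the same route as the paper: both use the energy identity, rescale to $v_i$, and show $\int_\Omega u_i^{p+1}\to S_n^{n/4}$ by splitting into $B_\delta$ and $\Omega\setminus B_\delta$ and applying dominated convergence against the envelope coming from \eqref{eq:|x|<delta} and Proposition~\ref{pro:ui<outBdelta}. The only difference is in the $\varepsilon_i$-remainder: the paper disposes of it in one line via H\"older, $\varepsilon_i\int_\Omega u_i^2\big/(\int_\Omega u_i^{p+1})^{2/(p+1)}\le \varepsilon_i|\Omega|^{4/n}\to 0$, whereas you estimate $\int_\Omega u_i^2$ directly from the pointwise bounds---correct, but more work than necessary and the appeal to $n\ge 12$ (really $n>8$) is not needed at this stage.
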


\begin{proof}
By \eqref{eq:SobolevHolder}, we have
\be\label{eq:SobolevHolder-1}
S_{n}  \leq \frac{\int_{\Omega}|\Delta u_i|^{2} \,\ud x}{(\int_{\Omega}|u_i|^{p+1} \,\ud x)^{2 / (p+1)}} \leq\varepsilon_{i}|\Omega|^{\frac{p-1}{p+1}}+\Big(\int_{\Omega}|u_i|^{p+1} \,\ud x\Big)^{\frac{p-1}{p+1}}.
\ee
Now we estimate $\int_{\Omega}|u_i|^{p+1} \,\ud x$. Let $\delta>0$ be the constant in \eqref{eq:|x|<delta} and \eqref{eq:|x|>delta}. Consider the rescaled function
$$
v_{i}(y)={u_{i}(0)^{-1}} u_{i}(u_{i}(0)^{-\frac{2}{n-4}}{y}),\quad \forall\, |y|<R_i:=\delta u_i(0)^{\frac{2}{n-4}}.
$$
Similar to the arguments of \eqref{eq:vvaredef}-\eqref{eq:V(x)def}, we have that
$$
v_{i}(y) \rightarrow V(y)\quad \text { in }\, C_{l o c}^{4}(\mathbb{R}^{n}).
$$
By the dominate convergence theorem and by direct computation, we have
$$
\int_{B_{\delta}} u_{i}^{p+1}(x) \,\ud x=\int_{B_{R_i}} v_{i}^{p+1}(y) \,\ud y \to \int_{\mathbb{R}^n} V^{p+1}(y) \,\ud y=S_n^{{n}/{4}}\quad \text{ as }\, i\to\infty.
$$
On the other hands, by \eqref{eq:|x|>delta}, we have
$$
\int_{\Omega\backslash B_{\delta}} u_{i}^{p+1}(x) \,\ud x\to 0\quad \text{ as }\, i\to\infty.
$$
Therefore,
$$
\int_{\Omega} u_{i}^{p+1}(x) \,\ud x=\int_{B_{\delta}} u_{i}^{p+1}(x) \,\ud x+\int_{\Omega\backslash B_{\delta}} u_{i}^{p+1}(x) \,\ud x\to S_n^{{n}/{4}}\quad \text{ as }\, i\to\infty.
$$
This and \eqref{eq:SobolevHolder-1} finish the proof.
\end{proof}

\subsection{Preliminary estimates}
Before proving Proposition \ref{pro:Han}, in this subsection we first give some preliminary estimates.

We introduce
\be\label{eq:Kelvin}
w_{\varepsilon}(z):=|z|^{4-n} v_{\varepsilon}\Big(\frac{z}{|z|^{2}}\Big)
\ee
to be the Kelvin transformation of $v_{\varepsilon}$, where $v_{\varepsilon}$ is given in \eqref{eq:vvaredef}. Let $y:=\frac{z}{|z|^{2}} \in \Omega_{\varepsilon}$, then $z \in \Omega_{\varepsilon}^{*}:=\{\frac{y}{|y|^{2}} \mid y \in \Omega_{\varepsilon}\}$. By direct computation (see, for example, \cite[Lemma 3.6]{XUniqueness2000}),
\be\label{eq:wvare-1}
\begin{cases}
-\Delta_{z} w_{\varepsilon}=2(n-4)|z|^{2-n} v_{\varepsilon}(y)+4|z|^{-n}(z \cdot \nabla_{y} v_{\varepsilon}(y))-|z|^{-n} \Delta_{y} v_{\varepsilon}(y),\\
(-\Delta_{z})^{2} w_{\varepsilon}=|z|^{-4-n} (-\Delta_{y})^{2} v_{\varepsilon}(y).
\end{cases}
\ee
By \eqref{eq:Kelvin}, we have
$$
z \cdot \nabla_{y} v_{\varepsilon}(y)=(4-n)|z|^{n-2} w_{\varepsilon}-|z|^{n-2}(z \cdot \nabla_{z} w_{\varepsilon}(z)).
$$
Notice that $w_{\varepsilon} = 0$ on $\partial \Omega_{\varepsilon}^{*}$. Hence,
$$
\nabla w_{\varepsilon}=(\nu \cdot \nabla w_{\varepsilon}) \nu=\frac{\partial w_{\varepsilon}}{\partial \nu} \nu\quad \text{ on }\, \partial \Omega_{\varepsilon}^{*},
$$
where $\nu$ is the unit outward normal vector of $\partial \Omega_{\varepsilon}^{*}$. Since $z \in \partial \Omega_{\varepsilon}^{*}$ if and only if $y \in \partial \Omega_{\varepsilon}$, using the above identities and \eqref{eq:vvareeq}, we have
$$
-\Delta_{z} w_{\varepsilon}=-4|z|^{-2}(z \cdot \nu) \frac{\partial w_{\varepsilon}}{\partial \nu}\quad \text{ on }\, \partial \Omega_{\varepsilon}^{*}.
$$
Therefore, it follows from \eqref{eq:wvare-1} and \eqref{eq:vvareeq} that $w_{\varepsilon}$ satisfies
\be\label{eq:wvare}
\begin{cases}
(-\Delta)^{2} w_{\varepsilon} =\varepsilon \mu_{\varepsilon}^4|z|^{-8}w_{\varepsilon}+w_{\varepsilon}^p & \text { in }\, \Omega_{\varepsilon}^{*}, \\
w_{\varepsilon}>0 & \text { in }\, \Omega_{\varepsilon}^{*}, \\
\displaystyle w_{\varepsilon} =0,\, -\Delta w_{\varepsilon} =-4|z|^{-2}(z \cdot \nu) \frac{\partial w_{\varepsilon}}{\partial \nu} & \text { on }\, \partial \Omega_{\varepsilon}^{*}.
\end{cases}
\ee

Notice that \eqref{eq:Han-1} is equivalent to
\be\label{eq:vvare<CV}
v_{\varepsilon}(y) \leq C V(y), \quad \forall\, y\in \Omega_{\varepsilon},
\ee
or
\be\label{eq:wvare<C}
w_{\varepsilon}(z) \leq C, \quad \forall\, z\in \Omega_{\varepsilon}^*,
\ee
for some $C>0$. For any fixed $R>0$, using \eqref{eq:Kelvin} and the fact that $0 \leq v_{\varepsilon} \leq 1$, we have
\be\label{eq:wvare<R}
w_{\varepsilon}(z) \leq R^{4-n}\quad \text{ for } \, |z| \geq R.
\ee
Hence, to prove \eqref{eq:wvare<C}, we only need to extimate $w_{\varepsilon}$ near the origin as $\varepsilon \rightarrow 0$. More precisely, we only need to prove the following proposition.

\begin{proposition}\label{pro:wvareLinftyBR<C}
Let $\Omega$ satisfies $(A)$ and $w_{\varepsilon}$ is given in \eqref{eq:Kelvin}. Then for any fixed $R>0$, there exists a positive constant $C$ independent of $\varepsilon$, such that
$$
\|w_{\varepsilon}\|_{L^{\infty}(\Omega_{\varepsilon}^{*} \cap B_{R/2})} \leq C.
$$
\end{proposition}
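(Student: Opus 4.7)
The approach is to read off the desired bound on $w_\varepsilon$ directly from the sharp pointwise decay of $u_\varepsilon$ near the blow-up point, using the explicit formula
\[
w_\varepsilon(z) = |z|^{4-n}\, v_\varepsilon\!\left(\frac{z}{|z|^2}\right) = |z|^{4-n}\,\mu_\varepsilon^{(n-4)/2}\, u_\varepsilon\!\left(\frac{\mu_\varepsilon z}{|z|^2}\right)
\]
obtained by composing \eqref{eq:Kelvin} with \eqref{eq:vvaredef}. All the pointwise control on $u_\varepsilon$ needed has already been assembled in the preceding subsection, so the proof reduces to a case split on the size of $|z|$.

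First I would verify the hypotheses of Lemma \ref{lem:IMRN-sharp}. Assumption $(A)$ together with \eqref{eq:maxu=u0} lets us take the blow-up sequence to be $x_i\equiv 0$; Proposition \ref{pro:ui-isolated} then identifies $0$ as an isolated blow-up point, and since $n\geq 12\geq 10$, Lemma \ref{lem:IMRN-isomustisosimle} upgrades it to an isolated simple blow-up point. Lemma \ref{lem:IMRN-sharp} therefore yields
\[
u_\varepsilon(x)\leq C\,\mu_\varepsilon^{(n-4)/2}\,|x|^{4-n}\quad\text{for }|x|\leq 1,
\]
while Proposition \ref{pro:ui<outBdelta} supplies the complementary bound $u_\varepsilon(x)\leq C\mu_\varepsilon^{(n-4)/2}\delta^{4-n}$ on $\Omega\setminus B_\delta$.

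Next, for any $z\in\Omega_\varepsilon^*\cap B_{R/2}$ set $x:=\mu_\varepsilon z/|z|^2\in\Omega$, so $|x|=\mu_\varepsilon/|z|$, and (since $x\in\Omega$) $|z|\geq\mu_\varepsilon/\mathrm{diam}(\Omega)$. I would split into the two regimes $|z|\geq\mu_\varepsilon/\delta$ (i.e.\ $|x|\leq\delta$) and $\mu_\varepsilon/\mathrm{diam}(\Omega)\leq|z|\leq\mu_\varepsilon/\delta$ (i.e.\ $|x|\geq\delta$). In the first regime, plugging the sharp estimate into the Kelvin formula causes $\mu_\varepsilon^{(n-4)/2}\cdot |z|^{4-n}$ to cancel exactly against $\mu_\varepsilon^{(n-4)/2}\cdot(\mu_\varepsilon/|z|)^{4-n}$, giving $w_\varepsilon(z)\leq C$. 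In the second regime, the cruder bound on $u_\varepsilon$ combined with $|z|^{4-n}\leq(\mathrm{diam}(\Omega)/\mu_\varepsilon)^{n-4}$ again produces $w_\varepsilon(z)\leq C\,\delta^{4-n}\,\mathrm{diam}(\Omega)^{n-4}$, independent of $\varepsilon$.

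The real work has in effect already been carried out, namely establishing the isolated-simple blow-up structure and the sharp decay estimate of Lemma \ref{lem:IMRN-sharp}; the present proposition then amounts to matching the Kelvin-transform weights against that decay. The main obstacle is conceptual rather than technical: one must recognize that the two powers of $\mu_\varepsilon$ cancel precisely because the Kelvin transform $w_\varepsilon(z)=|z|^{4-n}v_\varepsilon(z/|z|^2)$ is the conformal weighting adapted to the critical exponent $p=(n+4)/(n-4)$, which is exactly why the same sharp estimate that controls $u_\varepsilon$ simultaneously controls its inverted, rescaled partner $w_\varepsilon$ uniformly in $\varepsilon$.
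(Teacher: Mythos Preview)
Your argument is correct and considerably shorter than the paper's. The paper proves Proposition~\ref{pro:wvareLinftyBR<C} by an elaborate four–step elliptic argument: it writes the PDE \eqref{eq:wvare} for $w_\varepsilon$, splits off two auxiliary biharmonic functions $w_1,w_2$ to handle the boundary data, and then bounds the remainder $\tilde w=w_\varepsilon-w_1-w_2$ by inverting $I-N\Delta^{-2}(\eta Q)$ on $L^q$, invoking Hardy--Littlewood--Sobolev and Lemma~\ref{lem:localbound}. This route requires Lemma~\ref{lem:vare-muvare} (the Pohozaev bound $\varepsilon\le C\mu_\varepsilon^{n-8}$) and is exactly where the dimension restriction $n\ge 12$ enters, via \eqref{eq:dimention}.

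Your observation is that the pointwise estimates \eqref{eq:|x|<delta} and \eqref{eq:|x|>delta} have already been established in Subsection~4.1, and these are precisely the content of \eqref{eq:Han-1} once translated through the change of variables $x=\mu_\varepsilon z/|z|^2$; the Kelvin weight $|z|^{4-n}$ and the rescaling weight $\mu_\varepsilon^{(n-4)/2}$ cancel against the $|x|^{4-n}$ decay and the factor $\|u_\varepsilon\|_{L^\infty}^{-1}=\mu_\varepsilon^{(n-4)/2}$, respectively. Your approach bypasses Lemma~\ref{lem:vare-muvare} and Lemma~\ref{lem:localbound} entirely, needs only $n\ge 10$ (for Lemma~\ref{lem:IMRN-isomustisosimle}) rather than $n\ge 12$ at this step, and makes transparent that Proposition~\ref{pro:wvareLinftyBR<C} is simply a reformulation of the sharp isolated-simple blow-up estimate already in hand. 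The paper's PDE route would be the natural one in a setting where the sharp estimate of Lemma~\ref{lem:IMRN-sharp} was not available; here it is redundant, and your shortcut is preferable.
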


Before proving Proposition \ref{pro:wvareLinftyBR<C}, we first give the following estimate, which plays an important role in the proof of Proposition \ref{pro:wvareLinftyBR<C}.

\begin{lemma}\label{lem:vare-muvare}
There exist a constant $C>0$ such that
$$
\varepsilon\leq C\mu_{\varepsilon}^{n-8}
$$
for all $\varepsilon>0$.
\end{lemma}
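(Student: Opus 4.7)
The plan is to apply the Pohozaev identity of Lemma \ref{lem:Pohozaev},
\begin{equation*}
2\varepsilon\int_{\Omega} u_{\varepsilon}^{2}\,\ud x=\int_{\partial\Omega}\frac{\partial u_{\varepsilon}}{\partial\nu}\frac{\partial(-\Delta u_{\varepsilon})}{\partial\nu}(x\cdot\nu)\,\ud s,
\end{equation*}
and match the two sides against the blow-up scale $\mu_\varepsilon$: bounding the right-hand side from above by $C\mu_\varepsilon^{n-4}$ and the left-hand side from below by $c\varepsilon\mu_\varepsilon^{4}$ immediately gives $\varepsilon\leq C\mu_\varepsilon^{n-8}$. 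The genuinely interesting regime is $\varepsilon\to 0^{+}$, where $\mu_\varepsilon\to 0$ by \eqref{eq:utoinfty}; whenever $\mu_\varepsilon$ stays bounded away from $0$ the inequality is automatic since $\varepsilon<\lambda_{1}$.

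For the lower bound on the left-hand side, the rescaling \eqref{eq:vvaredef} together with a change of variables yields $\int_{\Omega}u_\varepsilon^{2}\,\ud x=\mu_\varepsilon^{4}\int_{\Omega_\varepsilon}v_\varepsilon^{2}\,\ud y$. Since $v_\varepsilon\to V$ uniformly on $\overline{B}_{1}$ with $V>0$, we have $\int_{B_{1}}v_\varepsilon^{2}\,\ud y\geq c>0$ for $\varepsilon$ small, hence $\int_{\Omega} u_\varepsilon^{2}\,\ud x\geq c\mu_\varepsilon^{4}$.

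For the upper bound on the boundary integral, the key is to show that both $|\partial_\nu u_\varepsilon|$ and $|\partial_\nu(-\Delta u_\varepsilon)|$ are $O(\mu_\varepsilon^{(n-4)/2})$ on $\partial\Omega$. Writing \eqref{eq:Navier} as the system \eqref{eq:equiveq} with $u_{1}=u_\varepsilon$, $u_{2}=-\Delta u_\varepsilon$, I would first establish, by splitting $\Omega$ into $\{|x|\leq\mu_\varepsilon\}$ (using the trivial bound $\|u_\varepsilon\|_{L^\infty}=\mu_\varepsilon^{-(n-4)/2}$), $\{\mu_\varepsilon\leq|x|\leq\delta\}$ (using \eqref{eq:|x|<delta}), and $\{|x|\geq\delta\}$ (using \eqref{eq:|x|>delta}), that
\begin{equation*}
\|\varepsilon u_{1}+u_{1}^{p}\|_{L^{1}(\Omega)}+\|\varepsilon u_{1}+u_{1}^{p}\|_{L^{\infty}(\omega)}\leq C\mu_\varepsilon^{(n-4)/2},
\end{equation*}
where $\omega$ is a neighborhood of $\partial\Omega$ disjoint from $\overline{B}_\delta$. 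A first application of Lemma \ref{lem:esti-unablau} to $-\Delta u_{2}=\varepsilon u_{1}+u_{1}^{p}$ on nested neighborhoods of $\partial\Omega$ then gives $\|u_{2}\|_{C^{1,\alpha}(\omega')}+\|u_{2}\|_{L^{1}(\Omega)}\leq C\mu_\varepsilon^{(n-4)/2}$, and a second application to $-\Delta u_{1}=u_{2}$ yields $\|u_{1}\|_{C^{1,\alpha}(\omega'')}\leq C\mu_\varepsilon^{(n-4)/2}$. Evaluating at the boundary bounds the two normal derivatives by $C\mu_\varepsilon^{(n-4)/2}$, so the boundary integral is $\leq C\mu_\varepsilon^{n-4}$.

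The main technical obstacle I expect is the $L^{1}$-estimate for $u_\varepsilon^{p}$: a naive application of \eqref{eq:|x|<delta} alone produces a divergent integrand of order $|x|^{-(n+4)}$ near the origin, and one must interpolate it with the trivial bound on the innermost ball $\{|x|\leq\mu_\varepsilon\}$ to recover exactly the power $\mu_\varepsilon^{(n-4)/2}$. Once both one-sided bounds are in place, combining them produces $2c\varepsilon\mu_\varepsilon^{4}\leq C\mu_\varepsilon^{n-4}$, i.e., $\varepsilon\leq C\mu_\varepsilon^{n-8}$, as claimed.
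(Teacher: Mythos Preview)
Your proposal is correct and follows essentially the same route as the paper: apply the Pohozaev identity, bound the left-hand side below by $c\varepsilon\mu_\varepsilon^{4}$ via the local uniform convergence $v_\varepsilon\to V$, and bound the boundary integral above by $C\mu_\varepsilon^{n-4}$ via two successive applications of Lemma~\ref{lem:esti-unablau} to the system \eqref{eq:equiveq}. The one minor difference is in how the $L^{1}$ bound $\|u_\varepsilon^{p}\|_{L^{1}(\Omega)}\leq C\mu_\varepsilon^{(n-4)/2}$ is obtained: the paper rescales and invokes the global convergence \eqref{eq:globalcompact} (i.e.\ $v_\varepsilon\to V$ in $H^{2}$, hence in $L^{p}$), whereas you split the domain and use the pointwise isolated-simple blow-up bounds \eqref{eq:|x|<delta}--\eqref{eq:|x|>delta} directly, which is slightly more self-contained and avoids appealing to \eqref{eq:globalcompact}.
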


\begin{proof}
We will prove this lemma by using the Pohozaev identity \eqref{eq:Pohozaev}. First of all, we shall find an lower bound of the left-hand side of \eqref{eq:Pohozaev}. By \eqref{eq:vvaredef} we have
\be\label{eq:Pohozaevleft}
\begin{aligned}
\varepsilon \int_{\Omega} u_{\varepsilon}^{2}(x) \,\ud x=&\varepsilon \int_{\Omega} \mu_{\varepsilon}^{4-n}v_{\varepsilon}^2(\mu_{\varepsilon}^{-1}x) \,\ud x\\
=&\varepsilon\mu_{\varepsilon}^{4} \int_{\Omega_{\varepsilon}} v_{\varepsilon}^2(y) \,\ud y\\
\geq& \varepsilon\mu_{\varepsilon}^{4} \int_{B_1(0)} v_{\varepsilon}^2(y) \,\ud y\\
\geq& C\varepsilon\mu_{\varepsilon}^{4},
\end{aligned}
\ee
where we used the fact that $v_{\varepsilon}$ converges to $V$ uniformly on any compact set.

In order to estimate the right-hand side of \eqref{eq:Pohozaev}, we need to get the estimates of $\|\nabla u_{\varepsilon}\|_{L^{\infty}(\partial \Omega)}$ and $\|\nabla \Delta u_{\varepsilon}\|_{L^{\infty}(\partial \Omega)}$. We use Lemma \ref{lem:esti-unablau} to estimate them. Since \eqref{eq:Navier} is equivalent to the elliptic system \eqref{eq:equiveq}, by Lemma \ref{lem:esti-unablau}, we have
$$
\|\Delta u_{\varepsilon}\|_{W^{1, q}(\Omega)}+\|\Delta u_{\varepsilon}\|_{C^{1, \alpha}(\omega^{\prime})} \leq C(\|\varepsilon u_{\varepsilon}+ u_{\varepsilon}^{p}\|_{L^{1}(\Omega)}+\|\varepsilon u_{\varepsilon}+ u_{\varepsilon}^{p}\|_{L^{\infty}(\omega)})
$$
and
$$
\|u_{\varepsilon}\|_{W^{1, q}(\Omega)}+\|u_{\varepsilon}\|_{C^{1, \alpha}(\omega^{\prime\prime})} \leq C(\|\Delta u_{\varepsilon}\|_{L^{1}(\Omega)}+\|\Delta u_{\varepsilon}\|_{L^{\infty}(\omega^{\prime})}),
$$
where $q\in [1, \frac{n}{n-1})$, $\alpha \in(0,1)$, $\omega$ is a neighborhood of $\partial \Omega$, and $\omega^{\prime} \subset \omega$ is a strict subdomain of $\omega$, $\omega^{\prime\prime} \subset \omega^{\prime}$ is a strict subdomain of $\omega^{\prime}$.

Using \eqref{eq:vvaredef}, we deduce that
$$
\begin{aligned}
\int_{\Omega} u_{\varepsilon}^{p}(x) \,\ud x =&\mu_{\varepsilon}^{-\frac{p(n-4)}{2}}\int_{\Omega}v_{\varepsilon}^p(\mu_{\varepsilon}^{-1}x)\,\ud x \\
=&\mu_{\varepsilon}^{-\frac{p(n-4)}{2}+n}\int_{\Omega_{\varepsilon}}v_{\varepsilon}^p(y)\,\ud y\\
\leq&\mu_{\varepsilon}^{-\frac{p(n-4)}{2}+n}\Big(C\int_{\Omega_{\varepsilon}}|v_{\varepsilon}(y)-V(y)|^p\,\ud y+C\int_{\Omega_{\varepsilon}}V^p(y)\,\ud y\Big)\\
\leq&C\mu_{\varepsilon}^{\frac{n-4}{2}},
\end{aligned}
$$
where we used the fact that $\lim_{\varepsilon\to 0}\int_{\Omega_{\varepsilon}}|v_{\varepsilon}(y)-V(y)|^p\,\ud y=0$ (see \eqref{eq:globalcompact}) and $\int_{\mathbb{R}^n} V^p(y)\,\ud y\leq C$. Moreover, we have
$$
\begin{aligned}
\int_{\Omega} u_{\varepsilon}(x) \,\ud x=&\mu_{\varepsilon}^{-\frac{n-4}{2}}\int_{\Omega}v_{\varepsilon}(\mu_{\varepsilon}^{-1}x)\,\ud x\\
=&\mu_{\varepsilon}^{\frac{n+4}{2}}\int_{\Omega_{\varepsilon}}v_{\varepsilon}(y)\,\ud y\\
\leq&\mu_{\varepsilon}^{\frac{n+4}{2}}\Big(C\int_{\Omega_{\varepsilon}}|v_{\varepsilon}(y)-V(y)|\,\ud y+C\int_{\Omega_{\varepsilon}}V(y)\,\ud y\Big)\\
\leq&\mu_{\varepsilon}^{\frac{n+4}{2}}\Big(C+C\int_{0}^{C\mu_{\varepsilon}^{-1}}\Big(\frac{1}{1+r^2}\Big)^{\frac{n-4}{2}}r^{n-1}\,\ud r\Big)\\
\leq&C\mu_{\varepsilon}^{\frac{n-4}{2}}.
\end{aligned}
$$
Therefore,
$$
\|\varepsilon u_{\varepsilon}+ u_{\varepsilon}^{p}\|_{L^{1}(\Omega)}\leq \|\varepsilon u_{\varepsilon}\|_{L^{1}(\Omega)}+\|u_{\varepsilon}^{p}\|_{L^{1}(\Omega)}\leq C\mu_{\varepsilon}^{\frac{n-4}{2}}.
$$
By Proposition \ref{pro:ui<outBdelta}, we find that
$$
\|\varepsilon u_{\varepsilon}+ u_{\varepsilon}^{p}\|_{L^{\infty}(\omega)}\leq C\mu_{\varepsilon}^{\frac{n-4}{2}}.
$$
Hence,
\be\label{eq:DeltauW1qC1alpha}
\|\Delta u_{\varepsilon}\|_{W^{1, q}(\Omega)}+\|\Delta u_{\varepsilon}\|_{C^{1, \alpha}(\omega^{\prime})}\leq C\mu_{\varepsilon}^{\frac{n-4}{2}}.
\ee

On the other hand, since
$$
\|\Delta u_{\varepsilon}\|_{L^1(\Omega)}\leq C\|\Delta u_{\varepsilon}\|_{L^q(\Omega)}\leq C\|\Delta u_{\varepsilon}\|_{W^{1, q}(\Omega)}\leq C\mu_{\varepsilon}^{\frac{n-4}{2}}
$$
and
$$
\|\Delta u_{\varepsilon}\|_{L^{\infty}(\omega^{\prime})}\leq\|\Delta u_{\varepsilon}\|_{C^{1, \alpha}(\omega^{\prime})} \leq C\mu_{\varepsilon}^{\frac{n-4}{2}},
$$
we have
\be\label{eq:uvareC1alpha}
\|u_{\varepsilon}\|_{C^{1, \alpha}(\omega^{\prime\prime})} \leq C(\|\Delta u_{\varepsilon}\|_{L^{1}(\Omega)}+\|\Delta u_{\varepsilon}\|_{L^{\infty}(\omega^{\prime})}) \leq C\mu_{\varepsilon}^{\frac{n-4}{2}}.
\ee

Combining \eqref{eq:DeltauW1qC1alpha} and \eqref{eq:uvareC1alpha} gives the bound
$$
\int_{\partial \Omega} \frac{\partial u_{\varepsilon}}{\partial \nu}\frac{\partial (-\Delta u_{\varepsilon})}{\partial \nu}(x \cdot \nu) \,\ud s\leq C\mu_{\varepsilon}^{n-4}.
$$
We put this bound and \eqref{eq:Pohozaevleft} into the Pohozaev identity \eqref{eq:Pohozaev}. Then we finally get
$$
\varepsilon\leq C\mu_{\varepsilon}^{n-8},
$$
which is the desired inequality.
\end{proof}

Now we can give the proof of Proposition \ref{pro:wvareLinftyBR<C}. The proof is very long, so we devide it into several steps.

\begin{proof}[Proof of Proposition \ref{pro:wvareLinftyBR<C}]
{\bf Step 1.} Let $R>0$ be a fixed constant. Let $w_1$ be a solution of the following problem
\be\label{eq:w-1}
\begin{cases}
(-\Delta)^{2} w =0 & \text { in }\, \Omega_{\varepsilon}^{*} \cap B_{R}, \\
w=-\Delta w =0 & \text { on }\, \partial \Omega_{\varepsilon}^{*}, \\
w=w_{\varepsilon},\, -\Delta w =-\Delta w_{\varepsilon} & \text { on }\, \partial B_{R}.
\end{cases}
\ee
Using the convergence of $v_{\varepsilon}$ to $V$ near $\partial B_{1 / R}$ and the definition of $w_{\varepsilon}$, we have
$$
|w_{\varepsilon}| \leq C(R) \quad \text { on }\, \partial B_{R},
$$
where $C(R)$ is a positive constant independent of $\varepsilon$. From the convexity assumption of $\Omega$ we obtain that $z \cdot \nu \leq 0$ on $\partial \Omega_{\varepsilon}^{*}$. On the other hand, since $\frac{\partial w_{\varepsilon}}{\partial \nu} \leq 0$ on $\partial \Omega_{\varepsilon}^{*}$, by \eqref{eq:wvare}, we have
$$
-\Delta w_{\varepsilon}=-4|z|^{-2}(z \cdot \nu) \frac{\partial w_{\varepsilon}}{\partial \nu} \leq 0 \quad \text { on }\, \partial \Omega_{\varepsilon}^{*}.
$$
Then using the maximum principle twice we have
\be\label{eq:w0nabw0<C}
|w_{1}| \leq C(R) \quad \text { in }\, \Omega_{\varepsilon}^{*} \cap B_{R}.
\ee

Similarly, let $w_2$ be a solution of the following problem
\be\label{eq:w-2}
\begin{cases}
(-\Delta)^{2} w= 0 & \text { in }\, \Omega_{\varepsilon}^{*} \cap B_{R}, \\
\displaystyle w=0, \, -\Delta w=-4|z|^{-2}(z \cdot \nu) \frac{\partial w_{\varepsilon}}{\partial \nu}  & \text { on }\, \partial \Omega_{\varepsilon}^{*},\\
w=-\Delta w=0& \text { on }\, \partial B_{R}.
\end{cases}
\ee
Arguing as in the above, we obtain $-4|z|^{-2}(z \cdot \nu) \frac{\partial w_{\varepsilon}}{\partial \nu} \leq 0$ on $\partial \Omega_{\varepsilon}^{*}$. Using the maximum principle twice we get
$$
-\Delta w_2 \leq 0 \quad \text { in }\, \Omega_{\varepsilon}^{*} \cap B_{R}
$$
and
\be\label{eq:w1<0}
w_2 \leq 0 \quad \text { in }\, \Omega_{\varepsilon}^{*} \cap B_{R}.
\ee

Let
\be\label{eq:tildewdef}
\tilde{w}:=w_{\varepsilon}-w_1-w_2
\ee
and
$$
\Omega_{\varepsilon}^R:=\Omega_{\varepsilon}^{*} \cap B_{R}.
$$
It follows from \eqref{eq:wvare}, \eqref{eq:w-1}, and \eqref{eq:w-2} that $\tilde{w}$ satisfies
\be\label{eq:tildew-1}
\begin{cases}
(-\Delta)^{2} \tilde{w} =a(z) w_{\varepsilon} &  \text { in }\, \Omega_{\varepsilon}^R, \\
\tilde{w} =-\Delta \tilde{w}=0  & \text { on }\, \partial \Omega_{\varepsilon}^R,
\end{cases}
\ee
where $a(z):=\varepsilon \mu_{\varepsilon}^4|z|^{-8}+w_{\varepsilon}^{p-1}$. Clearly by the maximum principle,
\be\label{eq:tildew>0}
\tilde{w} \geq 0 \quad \text { in }\, \Omega_{\varepsilon}^R.
\ee
Define
$$
Q(z):=
\begin{cases}
a(z) & \text { in }\, B_{r},\\
\displaystyle\frac{1}{M} a(z) & \text { in }\, B_{R} \backslash \overline{B}_{r},
\end{cases}
$$
where $r \in(0, R)$ and $M>1$ both independent of $\varepsilon$ and will be determined later. Then we can write
\be\label{eq:awvare-1}
a(z) w_{\varepsilon}=Q(z) w_{\varepsilon}+f(z),
\ee
where
$$
f(z)=
\begin{cases}
0 & \text { in }\, B_{r} , \\
\displaystyle(1-\frac{1}{M}) (\varepsilon \mu_{\varepsilon}^4|z|^{-8}w_{\varepsilon}+w_{\varepsilon}^{p}) & \text { in }\, B_{R} \backslash \overline{B}_{r}.
\end{cases}
$$
It is clear that $f \in L^{\infty}(\Omega_{\varepsilon}^R)$. Indeed, it follows from \eqref{eq:wvare<R} that
\be\label{eq:fLinfty}
\|f\|_{L^{\infty}(\Omega_{\varepsilon}^R)} \leq(1-\frac{1}{M}) (\varepsilon \mu_{\varepsilon}^4+1)r^{-(n+4)}.
\ee

Next, we give the estimate of $Q(z)$ in $L^{n / 4}(\Omega_{\varepsilon}^R)$, which will be used later. By the definition of $Q(z)$, we have
\be\label{eq:Qn/4-1}
\begin{aligned}
&\int_{\Omega_{\varepsilon}^R} Q(z)^{\frac{n}{4}} \,\ud z\\
\leq&\int_{\Omega_{\varepsilon}^{*} \cap B_{r}}  (\varepsilon \mu_{\varepsilon}^4|z|^{-8}+w_{\varepsilon}^{p-1})^{\frac{n}{4}} \,\ud z+\frac{1}{M^{\frac{n}{4}}}\int_{B_{R} \backslash B_{r}}(\varepsilon \mu_{\varepsilon}^4|z|^{-8}+w_{\varepsilon}^{p-1})^{\frac{n}{4}} \,\ud z\\
\leq&C\int_{\Omega_{\varepsilon}^{*} \cap B_{R}} (\varepsilon \mu_{\varepsilon}^4|z|^{-8})^{\frac{n}{4}} \,\ud z+C\int_{\Omega_{\varepsilon}^{*} \cap B_{r}}w_{\varepsilon}^{p+1}\,\ud z+\frac{C}{M^{\frac{n}{4}}} \int_{B_{R} \backslash B_{r}}w_{\varepsilon}^{p+1}\,\ud z.
\end{aligned}
\ee
Since $\Omega$ is a bounded domain and $\Omega_{\varepsilon}=\Omega / \mu_{\varepsilon}$, we can assume that $\Omega_{\varepsilon}\subset B_{C^{-1} \mu_{\varepsilon}^{-1}}$, where $C>0$ is a constant. This means $\Omega_{\varepsilon}^*\subset \mathbb{R}^n\backslash B_{C \mu_{\varepsilon}}$. Therefore, we have
\be\label{eq:Qn/4-2}
\int_{\Omega_{\varepsilon}^{*} \cap B_{R}} (\varepsilon \mu_{\varepsilon}^4|z|^{-8})^{\frac{n}{4}} \,\ud z= \varepsilon^{\frac{n}{4}}\mu_{\varepsilon}^n \int_{\Omega_{\varepsilon}^{*} \cap B_{R}} |z|^{-2n} \,\ud z\leq C\varepsilon^{\frac{n}{4}}.
\ee
By direct computations, there holds
$$
\int_{\Omega_{\varepsilon}} |v_{\varepsilon}(x)-V(x)|^{p+1}\,\ud x=\int_{\Omega_{\varepsilon}^*} |w_{\varepsilon}(z)-V(z)|^{p+1}\,\ud z.
$$
Notice that $v_{\varepsilon} \rightarrow V$ in $L^{p+1}$, we have
$$
\int_{\Omega_{\varepsilon}^*} |w_{\varepsilon}(z)-V(z)|^{p+1}\,\ud z\to0 \quad \text{ as }\, \varepsilon\to 0.
$$
Thus for any $c>0$, we can choose $r$ small enough and $M$ large enough such that for all small $\varepsilon$,
\be\label{eq:Qn/4-3}
\int_{\Omega_{\varepsilon}^{*} \cap B_{r}}w_{\varepsilon}^{p+1}\,\ud z+\frac{1}{M^{\frac{n}{4}}} \int_{B_{R} \backslash B_{r}}w_{\varepsilon}^{p+1}\,\ud z\leq c.
\ee
Combining \eqref{eq:Qn/4-1}-\eqref{eq:Qn/4-3} we conclude that for any $\lambda>0$, by choosing the appropriate $r$ and $M$, we have
\be\label{eq:Qn/4-4}
\int_{\Omega_{\varepsilon}^R} Q(z)^{\frac{n}{4}} \,\ud z\leq\lambda
\ee
for $\varepsilon$ sufficient small.

{\bf Step 2.} Define
\be\label{eq:defeta}
\eta(z):=\chi_{\{x \in \Omega_{\varepsilon}^R \mid w_{\varepsilon}(x) \leq N(w_{\varepsilon}(x)-w_1(x))\}}(z)\quad \text{ for }\, z\in \Omega_{\varepsilon}^R,
\ee
where $\chi_{D}$ denotes the characteristic function of $D$, $N>1$, $w_1$ is the solution of \eqref{eq:w-1}. Then by \eqref{eq:tildewdef} and \eqref{eq:w1<0}, we can rewrite \eqref{eq:awvare-1} as
\be\label{eq:awvare<}
\begin{aligned}
a(z) w_{\varepsilon}=&\eta(z) Q(z) w_{\varepsilon}+\tilde{f}(z)\\
\leq&N \eta(z) Q(z)(w_{\varepsilon}(z)-w_1(z))+\tilde{f}(z)\\
\leq&N \eta(z) Q(z) \tilde{w}(z)+\tilde{f}(z),
\end{aligned}
\ee
where $\tilde{f}(z):=(1-\eta(z)) Q(z) w_{\varepsilon}+f(z)$. By \eqref{eq:defeta}, we have
$$
1-\eta(z)=\chi_{\{x \in \Omega_{\varepsilon}^R \mid w_{\varepsilon}(x) <N w_1(x) /(N-1)\}}(z).
$$
We claim that there exists a constant $C>0$ independent of $\varepsilon$, such that
\be\label{eq:tildefinfty}
\|\tilde{f}\|_{L^{\infty}(\Omega_{\varepsilon}^R)} \leq C.
\ee
Indeed, by the definition of $Q(z)$, \eqref{eq:w0nabw0<C}, and \eqref{eq:fLinfty}, we only need to prove that $\|\varepsilon\mu_{\varepsilon}^4|z|^{-8}\|_{L^{\infty}(\Omega_{\varepsilon}^R)}\leq C$. By Lemma \ref{lem:vare-muvare}, we have
\be\label{eq:dimention}
\begin{aligned}
\|\varepsilon \mu_{\varepsilon}^4|z|^{-8}\|_{L^{\infty}(\Omega_{\varepsilon}^R)}\leq& \varepsilon \mu_{\varepsilon}^4 (C\mu_{\varepsilon})^{-8}\\
\leq&C\varepsilon \mu_{\varepsilon}^{-4}\\
\leq& C\mu_{\varepsilon}^{n-12}\\
\leq& C,
\end{aligned}
\ee
where we used the fact that $n\geq 12$, from which the claim follows.

{\bf Step 3.} By \eqref{eq:awvare<}, we can write \eqref{eq:tildew-1} in the form
\be\label{eq:tildew-2}
\begin{cases}
(-\Delta)^{2} \tilde{w}  \leq N \eta(z) Q(z) \tilde{w}+\tilde{f}  & \text { in }\, \Omega_{\varepsilon}^R, \\
\tilde{w} =-\Delta \tilde{w}=0  & \text { on }\, \partial \Omega_{\varepsilon}^R.
\end{cases}
\ee
In this step we will prove that $\tilde{w}$ is uniformly bounded in $L^{q}(\Omega_{\varepsilon}^R)$ for any $q>1$.

Let $G(x, y)$ denote the Green function of $(-\Delta)^{2}$ on $\Omega_{\varepsilon}^R$ under the Navier boundary condition:
$$
\begin{cases}
(-\Delta)^{2} G(\cdot, y) =\delta_{y}(\cdot)  & \text { in }\, \Omega_{\varepsilon}^R, \\
G(\cdot, y) =-\Delta G(\cdot, y)=0  & \text { on }\, \partial \Omega_{\varepsilon}^R.
\end{cases}
$$
Let $G(x, y)=\Gamma(x, y)+g(x, y)$, where $\Gamma(x, y)$ is the fundamental solution of $(-\Delta)^{2}$ on $\mathbb{R}^n$, see \eqref{eq:fundsolu}, and $g(x, y)$ the regular part of $G$, see \eqref{eq:regupart}. By the maximum principle, we have that
\be\label{eq:G>=0}
G(x, y) \geq 0 \quad \text { in }\, \Omega_{\varepsilon}^R \times \Omega_{\varepsilon}^R.
\ee
Let $q>1$ and $v \in L^{q}(\Omega_{\varepsilon}^R)$, define
$$
\Delta^{-2} v(y):=\int_{\Omega_{\varepsilon}^R} G(x, y) v(x) \,\ud x,\quad y\in \Omega_{\varepsilon}^R.
$$
Now we claim that $\Delta^{-2}$ is bounded, independently of $\varepsilon$, from $L^{q}(\Omega_{\varepsilon}^R)$ to $L^{q}(\Omega_{\varepsilon}^R)$. Let $\frac{1}{q^{\prime}}=\frac{1}{q}+\frac{4}{n}$. Firstly, \eqref{eq:G>=0} yields
\be\label{eq:Delta-2v}
\|\Delta^{-2} v\|_{L^{q}(\Omega_{\varepsilon}^R)} \leq 2\Big\|\int_{\Omega_{\varepsilon}^R} \Gamma(x, y) v(x) \,\ud x\Big\|_{L^{q}(\Omega_{\varepsilon}^R)}.
\ee
Therefore, using \eqref{eq:fundsolu}, the Hardy-Littlewood-Sobolev's inequality (see \cite{LSharp1983}), and the H\"older inequality, we get
\begin{equation}\label{4.31}
\begin{aligned}
\Big\|\int_{\Omega_{\varepsilon}^R} \Gamma(x, y) v(x) \,\ud x\Big\|_{L^{q}(\Omega_{\varepsilon}^R)} \leq& C\|v\|_{L^{q^{\prime}}(\Omega_{\varepsilon}^R)}\\
\leq& C|\Omega_{\varepsilon}^R|^{{4}/{n}}\|v\|_{L^{q}(\Omega_{\varepsilon}^R)}\\
\leq& C\|v\|_{L^{q}(\Omega_{\varepsilon}^R)},
\end{aligned}
\end{equation}
where $C>0$ is some constant independent of $\varepsilon$. Hence our claim is valid.

Note that $G(x,y)$ is non-negative, so the operator $\Delta^{-2}$ is non-negative, then by \eqref{eq:tildew-2} we can write
$$
(I-N \Delta^{-2}(\eta Q)) \tilde{w} \leq \Delta^{-2} \tilde{f}.
$$
According to \eqref{eq:Delta-2v} and \eqref{4.31}, we have for $v \in L^{q}(\Omega_{\varepsilon}^R)$,
$$
\begin{aligned}
\|N\Delta^{-2}(\eta Q) v\|_{L^{q}(\Omega_{\varepsilon}^R)} & \leq C\|(\eta Q) v\|_{L^{q^{\prime}}(\Omega_{\varepsilon}^R)} \\
& \leq C\|\eta Q\|_{L^{{n}/{4}}(\Omega_{\varepsilon}^R)}\|v\|_{L^{q}(\Omega_{\varepsilon}^R)}.
\end{aligned}
$$
Choosing $\lambda$ in \eqref{eq:Qn/4-4} small enough, such that $C\|\eta Q\|_{L^{{n}/{4}}(\Omega_{\varepsilon}^R)}\leq 1/2$. Therefore, $I-N \Delta^{-2}(\eta Q)$ is invertible, and
$$
\tilde{w} \leq(I-N \Delta^{-2}(\eta Q))^{-1} \Delta^{-2} \tilde{f}.
$$
By \eqref{eq:tildew>0} and \eqref{eq:tildefinfty} we have that
\be\label{eq:tildew<C}
\begin{aligned}
\|\tilde{w}\|_{L^{q}(\Omega_{\varepsilon}^R)} & \leq \|(I-N \Delta^{-2}(\eta Q))^{-1}\|\|\Delta^{-2}\|\|\tilde{f}\|_{L^{q}(\Omega_{\varepsilon}^R)}\\
& \leq C\|\tilde{f}\|_{L^{q}(\Omega_{\varepsilon}^R)} \\
& \leq C\|\tilde{f}\|_{L^{\infty}(\Omega_{\varepsilon}^R)} \\
&\leq C,
\end{aligned}
\ee
where $C>0$ independent of $\varepsilon$.

{\bf Step 4.} Now we can finish the proof of this proposition. First of all, by \eqref{eq:tildewdef}, \eqref{eq:w1<0}, and \eqref{eq:tildew>0} we have
$$
\tilde{w} \geq(w_{\varepsilon}-w_1)^{+} .
$$
Therefore,
$$
\begin{aligned}
\|w_{\varepsilon}\|_{L^{q}(\Omega_{\varepsilon}^R)} & \leq\|(w_{\varepsilon}-w_1)^{+}\|_{L^{q}(\Omega_{\varepsilon}^R)}+\|(w_{\varepsilon}-w_1)^{-}\|_{L^{q}(\Omega_{\varepsilon}^R)}
+\|w_1\|_{L^{q}(\Omega_{\varepsilon}^R)} \\
&\leq\|\tilde{w}\|_{L^{q}(\Omega_{\varepsilon}^R)}+\|w_1-w_{\varepsilon}\|_{L^{q}(\{x\in \Omega_{\varepsilon}^R \mid w_{\varepsilon}(x)\leq w_1(x)\})}+\|w_1\|_{L^{q}(\Omega_{\varepsilon}^R)}\\
&\leq\|\tilde{w}\|_{L^{q}(\Omega_{\varepsilon}^R)}+3\|w_1\|_{L^{q}(\Omega_{\varepsilon}^R)},
\end{aligned}
$$
then by \eqref{eq:tildew<C} and \eqref{eq:w0nabw0<C}, we obtain
\be\label{eq:wvareLq<C}
\|w_{\varepsilon}\|_{L^{q}(\Omega_{\varepsilon}^R)} \leq C,
\ee
where $C>0$ independent of $\varepsilon$.

Rewrite \eqref{eq:wvare} as
$$
(-\Delta)^{2} w_{\varepsilon} =a(z)w_{\varepsilon} \quad \text { in }\, \Omega_{\varepsilon}^{*},
$$
where
$$
a(z)=\varepsilon \mu_{\varepsilon}^4|z|^{-8}+w_{\varepsilon}^{p-1}.
$$
We will use Lemma \ref{lem:localbound} to prove that $w_{\varepsilon}$ is bounded on $\Omega_{\varepsilon}^{*} \cap B_{R/2}$. Using \eqref{eq:wvareLq<C}, we only need to show that $a(z)\in L^{{n}/{4}+\tau}(\Omega_{\varepsilon}^R)$ for some $\tau>0$. It follows from Lemma \ref{lem:vare-muvare} that we can choose $\tau>0$ such that
$$
\begin{aligned}
\|\varepsilon \mu_{\varepsilon}^4|z|^{-8}\|_{L^{\frac{n}{4}+\tau}(\Omega_{\varepsilon}^R)}\leq& \varepsilon \mu_{\varepsilon}^4 \Big(\int_{C\mu_{\varepsilon}}^{R}r^{-n-8\tau-1} \,\ud r \Big)^{\frac{4}{n+4\tau}}\\
\leq&C\varepsilon \mu_{\varepsilon}^{-\frac{16\tau}{n+4\tau}}\\
\leq& C\mu_{\varepsilon}^{n-8-\frac{16\tau}{n+4\tau}}\\
\leq& C.
\end{aligned}
$$
On the other hand, by \eqref{eq:wvareLq<C}, we have
$$
\|w_{\varepsilon}^{p-1}\|_{L^{\frac{n}{4}+\tau}(\Omega_{\varepsilon}^R)}=
\Big(\int_{\Omega_{\varepsilon}^R}w_{\varepsilon}^{\frac{2(n+4\tau)}{n-4}}(z)\,\ud z\Big)^{\frac{4}{n+4\tau}}\leq C.
$$
This completes the proof of Proposition \ref{pro:wvareLinftyBR<C}.
\end{proof}

\subsection{Proof of Proposition \ref{pro:Han}}

In this subsection we will finish the proof of Proposition \ref{pro:Han}. Using \eqref{eq:wvare<C}, \eqref{eq:wvare<R}, and Proposition \ref{pro:wvareLinftyBR<C}, we complete the proof of Proposition \ref{pro:Han} (i). Next, we prove  (ii) and (iii) of Proposition \ref{pro:Han}.

\begin{proof}[Proof of Proposition \ref{pro:Han} (ii).]
For convenience, we denote $\tilde{u}_{\varepsilon}(x):=\|u_{\varepsilon}\|_{L^{\infty}(\Omega)} u_{\varepsilon}(x)$. By the definition of $\mu_{\varepsilon}$ in \eqref{eq:defmuvare}, we have
\be\label{eq:eqtildeu}
\begin{cases}
(-\Delta)^{2} \tilde{u}_{\varepsilon} =\varepsilon\mu_{\varepsilon}^{-\frac{n-4}{2}} u_{\varepsilon}+\mu_{\varepsilon}^{-\frac{n-4}{2}} u_{\varepsilon}^{p}  & \text { in }\, \Omega, \\
\tilde{u}_{\varepsilon} =-\Delta \tilde{u}_{\varepsilon}=0  & \text { on }\, \partial \Omega.
\end{cases}
\ee
Note that
$$
\begin{aligned}
&\int_{\Omega} (\varepsilon\mu_{\varepsilon}^{-\frac{n-4}{2}} u_{\varepsilon}(x)+\mu_{\varepsilon}^{-\frac{n-4}{2}} u_{\varepsilon}^{p}(x)) \,\ud x\\
=&\varepsilon\mu_{\varepsilon}^{-\frac{n-4}{2}}\int_{\Omega} \mu_{\varepsilon}^{-\frac{n-4}{2}}v_{\varepsilon}(\mu_{\varepsilon}^{-1}x)\,\ud x+\mu_{\varepsilon}^{-\frac{n-4}{2}}\int_{\Omega}(\mu_{\varepsilon}^{-\frac{n-4}{2}}v_{\varepsilon}(\mu_{\varepsilon}^{-1}x))^p\,\ud x\\
=&\varepsilon\mu_{\varepsilon}^4\int_{\Omega_{\varepsilon}} v_{\varepsilon}(y)\,\ud y+\int_{\Omega_{\varepsilon}} v_{\varepsilon}^p(y)\,\ud y.
\end{aligned}
$$
By \eqref{eq:vvare<CV} and \eqref{eq:V(x)def}, we have
$$
\varepsilon\mu_{\varepsilon}^4\int_{\Omega_{\varepsilon}} v_{\varepsilon}(y)\,\ud y\leq C\varepsilon\mu_{\varepsilon}^4\int_{\Omega_{\varepsilon}} \Big(\frac{1}{1+c_0|y|^2}\Big)^{\frac{n-4}{2}}\,\ud y\leq C\varepsilon\mu_{\varepsilon}^4\int_{0}^{C\mu_{\varepsilon}^{-1}} r^3\,\ud r\leq C\varepsilon.
$$
On the other hand, given the uniform bound \eqref{eq:vvare<CV}, we use the Lebesgue dominated convergence theorem to obtain
\be\label{eq:Lebesgue-dominated}
\lim_{\varepsilon\to 0}\int_{\Omega_{\varepsilon}} v_{\varepsilon}^p(y)\,\ud y=\int_{\mathbb{R}^n} V^p(y)\,\ud y=c_0^{-\frac{n}{2}}\omega_n \int_{0}^{\infty} r^{n-1}(1+r^{2})^{-\frac{n+4}{2}} \,\ud r=\frac{2 c_0^{-{n}/{2}}\omega_n}{n(n+2)}.
\ee
Therefore,
$$
\lim_{\varepsilon\to 0}\int_{\Omega} (\varepsilon\mu_{\varepsilon}^{-\frac{n-4}{2}} u_{\varepsilon}(x)+\mu_{\varepsilon}^{-\frac{n-4}{2}} u_{\varepsilon}^{p}(x)) \,\ud x=\frac{2 c_0^{-{n}/{2}}\omega_n}{n(n+2)}.
$$
If $x \neq 0$, by \eqref{eq:Han-1} we have
$$
\begin{aligned}
&\varepsilon\mu_{\varepsilon}^{-\frac{n-4}{2}} u_{\varepsilon}(x) +\mu_{\varepsilon}^{-\frac{n-4}{2}} u_{\varepsilon}^{p}(x)\\
\leq& C\varepsilon\mu_{\varepsilon}^{-\frac{n-4}{2}}\Big(\frac{\mu_{\varepsilon}}{\mu_{\varepsilon}^{2}+|x|^{2}}\Big)
^{\frac{n-4}{2}}+C \mu_{\varepsilon}^{-\frac{n-4}{2}}\Big(\frac{\mu_{\varepsilon}}{\mu_{\varepsilon}^{2}+|x|^{2}}\Big)
^{\frac{n+4}{2}} \\
\leq& {C\varepsilon}{|x|^{4-n}}+{C \mu_{\varepsilon}^{4}}{|x|^{-n-4}} \to 0
\end{aligned}
$$
as $\varepsilon\to 0$.
Therefore we may conclude that
\be\label{eq:-Deltatildeuto}
(-\Delta)^{2} \tilde{u}_{\varepsilon}(x)=\varepsilon\mu_{\varepsilon}^{-\frac{n-4}{2}} u_{\varepsilon}(x) +\mu_{\varepsilon}^{-\frac{n-4}{2}} u_{\varepsilon}^{p}(x)\rightarrow \frac{2 c_0^{-{n}/{2}}\omega_n}{n(n+2)} \delta_{0}(x),
\ee
where $\delta_{0}$ is the Dirac function concentrating at $0$.

Let $\omega$ be a neighborhood of $\partial \Omega$ not containing $x=0$. It follows from \eqref{eq:eqtildeu}, Lemma \ref{lem:esti-unablau}, and the proof of \eqref{eq:-Deltatildeuto} that
$$
\|\Delta \tilde{u}_{\varepsilon}\|_{C^{1, \beta}(\omega)} \leq C(\|\varepsilon\mu_{\varepsilon}^{-\frac{n-4}{2}} u_{\varepsilon}+\mu_{\varepsilon}^{-\frac{n-4}{2}} u_{\varepsilon}^{p}\|_{L^{1}(\Omega)}+\|\varepsilon\mu_{\varepsilon}^{-\frac{n-4}{2}} u_{\varepsilon}+\mu_{\varepsilon}^{-\frac{n-4}{2}} u_{\varepsilon}^{p}\|_{L^{\infty}(\omega^{\prime})})\leq C,
$$
where $\beta \in(0,1)$, $\omega^{\prime}$ is a neighborhood of $\partial \Omega$ not containing $0$ such that $\omega \subset \subset \omega^{\prime}$ and $C>0$ independent of $\varepsilon$. Therefore, by Arzela-Ascoli's theorem, there exists a function $P(x)$ such that
$$
\Delta\tilde{u}_{\varepsilon}(x) \rightarrow P(x) \quad \text { in }\, C^{1, \alpha}(\omega),
$$
where $\alpha \in(0,1)$. Similar to the proof of \eqref{eq:uvareC1alpha}, we also have that there exists a function $Q(x)$ such that
$$
\tilde{u}_{\varepsilon}(x) \rightarrow Q(x) \quad \text { in }\, C^{1, \alpha}(\omega).
$$
By the Green representation formula, we have
$$
\tilde{u}_{\varepsilon}(x)=\int_{\Omega} G(x,y)(-\Delta)^2 \tilde{u}_{\varepsilon}(y)\,\ud y
$$
and
$$
\Delta\tilde{u}_{\varepsilon}(x)=\int_{\Omega} \Delta_xG(x,y)(-\Delta)^2 \tilde{u}_{\varepsilon}(y)\,\ud y.
$$
Using \eqref{eq:-Deltatildeuto}, we find
$$
Q(x)=\frac{2 c_0^{-n/2}\omega_{n}}{n(n+2)}G(x,0),\quad P(x)=\frac{2 c_0^{-n/2}\omega_{n}}{n(n+2)}\Delta_x G(x,0).
$$
This completes the proof of Proposition \ref{pro:Han} (ii).
\end{proof}

\begin{proof}[Proof of Proposition \ref{pro:Han} (iii).]
Multiplying the Pohozaev's identity \eqref{eq:Pohozaev} by $\|u_{\varepsilon}\|_{L^{\infty}(\Omega)}^{2}=\mu_{\varepsilon}^{4-n}$, we get
\begin{equation}\label{5.9}
2\varepsilon\mu_{\varepsilon}^{4-n}\int_{\Omega} u_{\varepsilon}^2 \,\ud x=\int_{\partial \Omega} \frac{\partial \tilde{u}_{\varepsilon}}{\partial \nu}\frac{\partial (-\Delta \tilde{u}_{\varepsilon})}{\partial \nu}(x \cdot \nu) \,\ud s.
\end{equation}
Similar to the proof of \eqref{eq:Lebesgue-dominated}, we have
$$
\begin{aligned}
2\varepsilon\mu_{\varepsilon}^{4-n}\int_{\Omega} u_{\varepsilon}^2(x) \,\ud x &=2\varepsilon\mu_{\varepsilon}^{8-n} \int_{\Omega_{\varepsilon}}v_{\varepsilon}^2(y)\,\ud y\\
&\to2\Big(\lim _{\varepsilon \rightarrow 0} \varepsilon\|u_{\varepsilon}\|_{L^{\infty}(\Omega)}^{\frac{2(n-8)}{n-4}}\Big)\int_{\mathbb{R}^{n}} V^{2}(y) \,\ud y\\
&=2c_0^{-n/2}\omega_n\Big(\lim _{\varepsilon \rightarrow 0} \varepsilon\|u_{\varepsilon}\|_{L^{\infty}(\Omega)}^{\frac{2(n-8)}{n-4}}\Big)
\int_{0}^{\infty}r^{n-1}(1+r^2)^{4-n}\,\ud r\\
&=c_1\Big(\lim _{\varepsilon \rightarrow 0} \varepsilon\|u_{\varepsilon}\|_{L^{\infty}(\Omega)}^{\frac{2(n-8)}{n-4}}\Big),
\end{aligned}
$$
where
$$
c_1=c_0^{-n/2}{\omega_n}\frac{\Gamma(\frac{n-8}{2}) \Gamma(\frac{n}{2})}{\Gamma(n-4)}.
$$
By \eqref{eq:Han-2} and \eqref{eq:Han-2-1}, the limit of the right hand side of \eqref{5.9} is equal to
$$
\frac{4 c_0^{-n}\omega_{n}^{2}}{n^{2}(n+2)^{2}} \int_{\partial \Omega} \frac{\partial G(x,0)}{\partial \nu}\frac{\partial (-\Delta_x G(x,0))}{\partial \nu}(x \cdot \nu) \,\ud s.
$$
As a result,
$$
\lim _{\varepsilon \rightarrow 0} \varepsilon\|u_{\varepsilon}\|_{L^{\infty}(\Omega)}^{\frac{2(n-8)}{n-4}}=\frac{4 c_0^{-n}\omega_{n}^{2}}{c_1n^{2}(n+2)^{2}} \int_{\partial \Omega} \frac{\partial G(x,0)}{\partial \nu}\frac{\partial (-\Delta_x G(x,0))}{\partial \nu}(x \cdot \nu) \,\ud s.
$$
Thanks to Lemma \ref{lem:PohozaevGreen}, we concludes the proof of Proposition \ref{pro:Han} (iii). We also know that the constant $C_n$ in the statement of the proposition is given by
\be\label{eq:constantCn}
C_n=\frac{2(n-4) c_0^{-n}\omega_{n}^{2}}{c_1n^{2}(n+2)^{2}}.
\ee
\end{proof}

\section{Proof of Theorem \ref{thm:2}}
This section is devoted to the proof of Theorem \ref{thm:2}. Here we follow the arguments in \cite{CA1999}, making the necessary modifications to handle the higher order case.

\begin{proof}[Proof of Theorem \ref{thm:2}]
Suppose, by contradiction, that there exist $u_{\varepsilon}$ and $v_{\varepsilon}$ satisfy \eqref{eq:Navier}, $u_{\varepsilon} \not \equiv v_{\varepsilon}$ for $\varepsilon \to 0$. Define
$$
\tilde{w}_{\varepsilon}(x):=u_{\varepsilon}(\mu_{\varepsilon}{x})-v_{\varepsilon}(\mu_{\varepsilon}{x}), \quad x \in \Omega_{\varepsilon},
$$
and
$$
w_{\varepsilon}(x):=\frac{\tilde{w}_{\varepsilon}(x)}{\|\tilde{w}_{\varepsilon}\|_{L^{\infty}(\Omega_{\varepsilon})}}
=\frac{\tilde{w}_{\varepsilon}(x)}{\|u_{\varepsilon}-v_{\varepsilon}\|_{L^{\infty}(\Omega)}}, \quad x \in \Omega_{\varepsilon},
$$
where $\mu_{\varepsilon}=\|u_{\varepsilon}\|_{L^{\infty}(\Omega)}^{-{2}/({n-4})}$ and $\Omega_{\varepsilon}=\Omega / \mu_{\varepsilon}$. Direct calculations show that $w_{\varepsilon}$ satisfies
$$
\begin{cases}
(-\Delta)^2 w_{\varepsilon}=(\varepsilon \mu_{\varepsilon}^4+c_{\varepsilon}(x)) w_{\varepsilon} & \text { in }\, \Omega_{\varepsilon}, \\
w_{\varepsilon}=-\Delta w_{\varepsilon}=0 & \text { on }\, \partial \Omega_{\varepsilon},
\end{cases}
$$
where
$$
c_{\varepsilon}(x)=p \int_{0}^{1}(t \mu_{\varepsilon}^{\frac{n-4}{2}}{u_{\varepsilon}}(\mu_{\varepsilon}{x})+(1-t) \mu_{\varepsilon}^{\frac{n-4}{2}}{v_{\varepsilon}}(\mu_{\varepsilon}{x}))^{p-1} \,\ud t.
$$
By \eqref{eq:vvaredef}-\eqref{eq:V(x)def}, we have
$$
\mu_{\varepsilon}^{\frac{n-4}{2}}{u_{\varepsilon}}(\mu_{\varepsilon}{x})\to V(x) \quad \text { uniformly on compact sets of }\, \mathbb{R}^{n}.
$$
On the other hand, by \eqref{eq:Han-3}, we have
$$
\lim _{\varepsilon \rightarrow 0}\|u_{\varepsilon}\|_{L^{\infty}(\Omega)}=\lim _{\varepsilon \rightarrow 0}\|v_{\varepsilon}\|_{L^{\infty}(\Omega)}=+\infty
$$
and
$$
\lim _{\varepsilon \rightarrow 0} \frac{\|u_{\varepsilon}\|_{L^{\infty}(\Omega)}}{\|v_{\varepsilon}\|_{L^{\infty}(\Omega)}}=1.
$$
Therefore,
$$
\|\mu_{\varepsilon}^{\frac{n-4}{2}}{v_{\varepsilon}}(\mu_{\varepsilon}\cdot)\|_{L^{\infty}(\Omega_{\varepsilon})}
=\frac{\|v_{\varepsilon}\|_{L^{\infty}(\Omega)}}{\|u_{\varepsilon}\|_{L^{\infty}(\Omega)}}\to 1\quad \text{ as }\, \varepsilon\to 0.
$$
So as in the proof of \eqref{eq:vvareeq}-\eqref{eq:v}, we also have
$$
\mu_{\varepsilon}^{\frac{n-4}{2}}{v_{\varepsilon}}(\mu_{\varepsilon}{x})\to V(x) \quad \text { uniformly on compact sets of }\, \mathbb{R}^{n}.
$$
Thus,
$$
c_{\varepsilon}(x) \rightarrow \frac{p}{(1+c_0|x|^{2})^{4}} \quad \text { uniformly on compact sets of }\, \mathbb{R}^{n}.
$$
By the assumptions of $\Omega$ and the method of moving planes, $w_{\varepsilon}$ is symmetric with respect to the hyperplanes $\{x_{i}=0\}$, $i=1, \cdots, n$. Since $\varepsilon \mu_{\varepsilon}^4 \rightarrow 0$ as $\varepsilon\to 0$ and $\|w_{\varepsilon}\|_{L^{\infty}(\Omega_{\varepsilon})}=1$, then it follows from standard elliptic theory that there exists a positive function $w$ such that (after passing to a subsequence) $w_{\varepsilon} \rightarrow w$ in $C_{loc}^4(\mathbb{R}^n)$, $w$ is symmetric and satisfies
\begin{equation}\label{9}
(-\Delta)^2 w=\frac{p}{(1+c_0|x|^{2})^{4}} w \quad \text { in }\, \mathbb{R}^{n}.
\end{equation}
Arguing as in \cite{CA1999}, it is easy to see that there exists a constant $C>0$ independent of $\varepsilon$, such that
\be\label{eq:DeltawvareL2<C}
\int_{\Omega_{\varepsilon}}|\Delta w_{\varepsilon}|^{2} \,\ud x \leq C,
\ee
and by Fatou's lemma, we get
$$
\int_{\mathbb{R}^n} |\Delta w|^{2} \,\ud x \leq C.
$$

We claim that there exist $C>0$ and $\delta>0$, such that
\begin{equation}\label{16}
|w_{\varepsilon}(x)| \leq C |y|^{4-n} \quad \text { in }\, \Omega_{\varepsilon} \backslash B_{\delta}.
\end{equation}
Indeed, define the Kelvin transform of $w_{\varepsilon}$:
$$
z_{\varepsilon}(x):=|x|^{4-n} w_{\varepsilon}\Big(\frac{x}{|x|^{2}}\Big), \quad x \in \Omega_{\varepsilon}^{*},
$$
where $\Omega_{\varepsilon}^{*}:=\{\frac{y}{|y|^{2}} \mid y \in \Omega_{\varepsilon}\}$. It will be enough to prove that $|z_{\varepsilon}(x)|$ is bounded in $\Omega_{\varepsilon}^{*}\cap B_R$ for some $R>0$. By calculations similar to \eqref{eq:wvare}, we obtain
$$
\begin{cases}
\displaystyle (-\Delta)^{2} z_{\varepsilon}=|x|^{-8}(\varepsilon\mu_{\varepsilon}^4+c_{\varepsilon}({x}/{|x|^{2}})) z_{\varepsilon} & \text { in }\, \Omega_{\varepsilon}^{*}, \\
\displaystyle z_{\varepsilon}=0,\, -\Delta z_{\varepsilon}=-4|x|^{-2}(x \cdot \nu) \frac{\partial z_{\varepsilon}}{\partial \nu} & \text { on }\, \partial \Omega_{\varepsilon}^{*}.
\end{cases}
$$
Then using \eqref{eq:Han-1} and Lemma \ref{lem:vare-muvare}, we can prove that $a(x):=|x|^{-8}(\varepsilon\mu_{\varepsilon}^4+c_{\varepsilon}({x}/{|x|^{2}})) \in L^{\alpha}(\Omega_{\varepsilon}^{*})$ for some $\alpha>n / 4$. On the other hand, using the Sobolev inequality \eqref{eq:Sobolevine} and \eqref{eq:DeltawvareL2<C}, we get
$$
\int_{\Omega_{\varepsilon}^{*} \cap B_{2 R}}|z_{\varepsilon}|^{p+1} \,\ud x  \leq \int_{\Omega_{\varepsilon}^{*}}|z_{\varepsilon}|^{p+1} \,\ud x=\int_{\Omega_{\varepsilon}}|w_{\varepsilon}|^{p+1} \,\ud y  \leq C\Big(\int_{\Omega_{\varepsilon}}|\Delta w_{\varepsilon}|^{2} \,\ud y\Big)^{(p+1) / 2} \leq C.
$$
Hence it follows from Lemma \ref{lem:localbound} that
$$
\sup _{\Omega_{\varepsilon}^{*} \cap B_R}|z_{\varepsilon}(x)| \leq C\Big(\int_{\Omega_{\varepsilon}^{*} \cap B_{2R}}|z_{\varepsilon}|^{p+1} \,\ud x\Big)^{1 /(p+1)} \leq C,
$$
and this proves the claim.

Now, we can back to \eqref{9}. It follows from \cite[Theorem 2.1]{BWWA2003} that there exist $a, b_i\in \mathbb{R}$, $i=1,\cdots,n$ such that
$$
w=a \frac{1-|x|^{2}}{(1+|x|^{2})^{(n-2) / 2}}+\sum_{i=1}^{n} b_{i} \frac{2 x_{i}}{(1+|x|^{2})^{(n-2) / 2}}.
$$
Since $w$ is a symmetric function with respect to the hyperplanes $\{x_{i}=0\}$, $i=1, \cdots, n$, each $b_{i}$ must be 0. On the other hand, using Lemma \ref{lem:Pohozaev} and Proposition \ref{pro:Han}, we can prove that $a=0$ by using a contradiction argument. We omit it since it is similarly to \cite[pages 108-111]{CA1999}. Therefore, we deduce that $w=0$.

Let $x_{\varepsilon}$ such that $\|w_{\varepsilon}\|_{L^{\infty}(\Omega_{\varepsilon})}=w_{\varepsilon}(x_{\varepsilon})=1$. Then we have $|x_{\varepsilon}| \rightarrow \infty$ because $w_{\varepsilon} \rightarrow w$ in $C_{loc}^4(\mathbb{R}^n)$ and $w=0$. This is impossible by \eqref{16} and Theorem \ref{thm:2} is proved.
\end{proof}


\begin{thebibliography}{10}

\bibitem{AYAn1994} Adimurthi, S.L. Yadava, \textit{An elementary proof of the uniqueness of positive radial solutions of a quasilinear Dirichlet problem}, Arch. Rational Mech. Anal. 127 (1994) 219--229.

\bibitem{BWWA2003} T. Bartsch, T. Weth, M. Willem, \textit{A Sobolev inequality with remainder term and critical equations on domains with topology for the polyharmonic operator}, Calc. Var. Partial Differential Equations 18 (2003) 253--268.

\bibitem{BNPositive1983} H. Brezis, L. Nirenberg, \textit{Positive solutions of nonlinear elliptic equations involving critical Sobolev exponents}, Comm. Pure Appl. Math. 36 (1983) 437--477.

\bibitem{CLPThe2021} D. Cao, P. Luo, S. Peng, \textit{The number of positive solutions to the Brezis-Nirenberg problem}, Trans. Amer. Math. Soc. 374 (2021) 1947--1985.

\bibitem{CMHarnack2006} G. Caristi, E. Mitidieri, \textit{Harnack inequality and applications to solutions of biharmonic equations}, Partial differential equations and functional analysis, 1–26, Oper. Theory Adv. Appl., 168, Birkh\"auser, Basel, 2006.


\bibitem{CA1999} K. Cerqueti, \textit{A uniqueness result for a semilinear elliptic equation involving the critical Sobolev exponent in symmetric domains}, Asymptot. Anal. 21 (1999) 99--115.

\bibitem{CGLocal2001} K. Cerqueti, M. Grossi, \textit{Local estimates for a semilinear elliptic equation with Sobolev critical exponent and application to a uniqueness result}, NoDEA Nonlinear Differential Equations Appl. 8 (2001) 251--283.

\bibitem{CKLAsymptotic2014} W. Choi, S. Kim, K.-A. Lee, \textit{Asymptotic behavior of solutions for nonlinear elliptic problems with the fractional Laplacian}, J. Funct. Anal. 266 (2014) 6531--6598.

\bibitem{CGAsymptotics2000} K.-S. Chou, D. Geng, \textit{Asymptotics of positive solutions for a biharmonic equation involving critical exponent}, Differential Integral Equations 13 (2000) 921--940.

\bibitem{EFJCritical1990} D.E. Edmunds, D. Fortunato, E. Jannelli, \textit{Critical exponents, critical dimensions and the biharmonic operator}, Arch. Rational Mech. Anal. 112 (1990) 269--289.

\bibitem{ESConcentration2006} K. El Mehdi, A. Selmi, \textit{Concentration and multiplicity of solutions for a fourth-order equation with critical nonlinearity}, Nonlinear Anal. 64 (2006) 417--439.

\bibitem{GGSExistence2003} F. Gazzola, H.-C. Grunau, M. Squassina, \textit{Existence and nonexistence results for critical growth biharmonic elliptic equations}, Calc. Var. Partial Differential Equations 18 (2003) 117--143.

\bibitem{GGSPolyharmonic2010} F. Gazzola, H.-C. Grunau, G. Sweers, \textit{Polyharmonic boundary value problems}. Positivity preserving and nonlinear higher order elliptic equations in bounded domains. Lecture Notes in Mathematics, 1991. Springer-Verlag, Berlin, 2010. xviii+423 pp.

\bibitem{HAsymptotic1991} Z.-C. Han, \textit{Asymptotic approach to singular solutions for nonlinear elliptic equations involving critical Sobolev exponent}, Ann. Inst. H. Poincar\'e C Anal. Non Lin\'eaire 8 (1991) 159--174.

\bibitem{JLXOn2014} T. Jin, Y.Y. Li, J. Xiong, \textit{On a fractional Nirenberg problem, part I: blow up analysis and compactness of solutions}, J. Eur. Math. Soc. (JEMS) 16 (2014) 1111--1171.

\bibitem{JLXThe2017} T. Jin, Y.Y. Li, J. Xiong, \textit{The Nirenberg problem and its generalizations: a unified approach}, Math. Ann. 369 (2017) 109--151.

\bibitem{LSolutions1992} M. Lazzo, \textit{Solutions positives multiples pour une \'equation elliptique non lin\'eaire avec l'exposant critique de Sobolev}, C. R. Acad. Sci. Paris S\'er. I Math. 314 (1992) 61--64.

\bibitem{LPrescribing1995} Y.Y. Li, \textit{Prescribing scalar curvature on $\mathbb{S}^n$ and related problems, Part I}, J. Differential Equations 120 (1995) 319--410.

\bibitem{LXCompactness2019} Y.Y. Li, J. Xiong, \textit{Compactness of conformal metrics with constant $Q$-curvature. I}, Adv. Math. 345 (2019) 116--160.

\bibitem{LZYamabe1999} Y.Y. Li, M. Zhu, \textit{Yamabe type equations on three-dimensional Riemannian manifolds}, Commun. Contemp. Math. 1 (1999) 1--50.

\bibitem{LSharp1983} E.H. Lieb, \textit{Sharp constants in the Hardy-Littlewood-Sobolev and related inequalities}, Ann. of Math. (2) 118 (1983) 349--374.

\bibitem{LA1998} C.-S. Lin, \textit{A classification of solutions of a conformally invariant fourth order equation in $\mathbb{R}^n$}, Comment. Math. Helv. 73 (1998) 206--231.

\bibitem{NPXCompactness2018} M. Niu, Z. Peng, J. Xiong, \textit{Compactness of solutions to nonlocal elliptic equations}, J. Funct. Anal. 275 (2018) 2333--2372.

\bibitem{NTZCompactness2022} M. Niu, Z. Tang, N. Zhou, \textit{Compactness of solutions to higher-order elliptic equations}, Int. Math. Res. Not. IMRN (2022), \href{}{https://doi.org/10.1093/imrn/rnac103}.

\bibitem{RA1989} O. Rey, \textit{A multiplicity result for a variational problem with lack of compactness}, Nonlinear Anal. 13 (1989) 1241--1249.

\bibitem{SUniqueness1993} P.N. Srikanth, \textit{Uniqueness of solutions of nonlinear Dirichlet problems}, Differential Integral Equations 6 (1993) 663--670.

\bibitem{TSome2013} F. Takahashi, \textit{Some identities of Green's function for the polyharmonic operator with the Navier boundary conditions and its applications}, Math. Nachr. 286 (2013) 306--319.

\bibitem{TSymmetry1981} W. Troy, \textit{Symmetry properties in systems of semilinear elliptic equations}, J. Differential Equations 42 (1981) 400--413.

\bibitem{VBest1995} R.C.A.M. Van der Vorst, \textit{Fourth-order elliptic equations with critical growth}, C. R. Acad. Sci. Paris S\'er. I Math. 320 (1995) 295--299.

\bibitem{XUniqueness2000} X. Xu, \textit{Uniqueness theorem for the entire positive solutions of biharmonic equations in $\mathbb{R}^n$}, Proc. Roy. Soc. Edinburgh Sect. A 130 (2000) 651--670.

\bibitem{ZUniqueness1992} L. Zhang, \textit{Uniqueness of positive solutions of $\Delta u+u+u^p=0$ in a ball}, Comm. Partial Differential Equations 17 (1992) 1141--1164.

\bibitem{ZUniqueness2000} M. Zhu, \textit{Uniqueness results through a priori estimates. II. Dirichlet problem}, J. Math. Anal. Appl. 248 (2000) 156--172.



\bigskip

\noindent Z. Tang \& N. Zhou

\noindent School of Mathematical Sciences, Laboratory of Mathematics and Complex Systems, MOE, \\
Beijing Normal University, Beijing, 100875, China\\[1mm]
Email: \textsf{tangzw@bnu.edu.cn}\\
Email: \textsf{nzhou@mail.bnu.edu.cn}





















\end{thebibliography}
\end{document}